\documentclass[11pt, reqno]{amsart}

\usepackage{amsthm,amssymb,amstext,amscd,amsfonts,amsbsy,amsrefs,amsxtra,latexsym,amsmath,xcolor,mathrsfs,fancybox,upgreek, soul,url}
\usepackage[english]{babel}
\usepackage[all,cmtip]{xy}
\usepackage[latin1]{inputenc}
\usepackage{cancel}
\usepackage[draft]{hyperref}
\usepackage{comment}
\usepackage{mdframed}
\allowdisplaybreaks
\usepackage{mathtools}
\usepackage{enumerate}
\usepackage{thmtools}
\usepackage{thm-restate}

\usepackage{chngcntr}
\usepackage{enumitem}
\usepackage{etoolbox}
\usepackage{todonotes}
\usepackage{footmisc}
\usepackage{bigints}
\usepackage{enumerate}
\usepackage{graphicx}
\usepackage{youngtab}
\usepackage{bm}

\DeclarePairedDelimiter\abs{\lvert}{\rvert}%
\DeclarePairedDelimiter\norm{\lVert}{\rVert}%

\makeatletter
\let\oldabs\abs
\def\abs{\@ifstar{\oldabs}{\oldabs*}}
\let\oldnorm\norm
\def\norm{\@ifstar{\oldnorm}{\oldnorm*}}
\makeatother

\makeatletter
\newcommand*\bigcdot{\mathpalette\bigcdot@{.5}}
\newcommand*\bigcdot@[2]{\mathbin{\vcenter{\hbox{\scalebox{#2}{$\m@th#1\bullet$}}}}}
\makeatother

\oddsidemargin = 0cm \evensidemargin = 0cm \textwidth = 6.5in

\newtheorem{theorem}{Theorem}
\newtheorem{lemma}[theorem]{Lemma}
\newtheorem{corollary}[theorem]{Corollary}
\newtheorem{proposition}[theorem]{Proposition}
\newtheorem{conjecture}[theorem]{Conjecture}

\theoremstyle{definition}

\theoremstyle{remark}

\newtheorem*{remark}{Remark}
\newtheorem*{example}{{\bf Example}}

\newtheorem*{remark*}{Remark}
\newtheorem*{remarks*}{Remarks}

\numberwithin{theorem}{section}
\numberwithin{proposition}{section}
\numberwithin{lemma}{section}
\numberwithin{corollary}{section}
\numberwithin{equation}{section}
\numberwithin{conjecture}{section}

\setlist[enumerate,1]{before=}
\AfterEndEnvironment{enumerate}{}


\newcommand{\N}{\mathbb{N}}
\newcommand{\Z}{\mathbb{Z}}

\newcommand{\Q}{\mathbb{Q}}
\newcommand{\SL}{{\text {\rm SL}}}

\renewcommand{\pmod}[1]{\  \,  \left( \mathrm{mod} \,  #1 \right)}
\newcommand{\tmod}[1]{\  \, ( \mathrm{mod} \,  #1 )}

\begin{document}

\title[On $t$-core and self-conjugate $(2t-1)$-core partitions in arithmetic progressions]{On $t$-core and self-conjugate $(2t-1)$-core partitions in arithmetic progressions}

\author{Kathrin Bringmann}
\address{Department of Mathematics and Computer Science, Division of Mathematics, University of Cologne, Weyertal 86-90, 50931 Cologne, Germany}
\email{kbringma@math.uni-koeln.de}

\author{Ben Kane}
\address{Department of Mathematics, University of Hong Kong, Pokfulam, Hong Kong}
\email{bkane@hku.hk}

\author{Joshua Males}
\address{Department of Mathematics and Computer Science, Division of Mathematics, University of Cologne, Weyertal 86-90, 50931 Cologne, Germany}
\email{jmales@math.uni-koeln.de}

\maketitle

\begin{abstract}
 We extend recent results of Ono and Raji, relating the number of self-conjugate $7$-core partitions to Hurwitz class numbers. Furthermore, we give a combinatorial explanation for the curious equality $2\operatorname{sc}_7(8n+1) = \operatorname{c}_4(7n+2)$.
 We also conjecture that an equality of this shape holds if and only if $t=4$, proving the cases $t\in\{2,3,5\}$ and giving partial results for $t>5$.
\end{abstract}

\section{Introduction and Statement of Results}

A \emph{partition} $\Lambda$ of $n\in\N$ is a non-increasing sequence $\Lambda \coloneqq (\lambda_1, \lambda_2, \dots, \lambda_s)$ of non-negative integers $\lambda_j$ such that $\sum_{1\leq j\leq s} \lambda_j = n$.
The \emph{Ferrers--Young diagram} of $\Lambda$ is the $s$-rowed diagram

\begin{equation*}
\begin{matrix}
\text{\large $\bullet$} & \text{\large $\bullet$} & \cdots & \text{\large $\bullet$} & \qquad \text{ $\lambda_1$ dots}  \\
\text{\large $\bullet$} & \text{\large $\bullet$} & \cdots & \text{\large $\bullet$} & \qquad \text{ $\lambda_2$ dots}	\\
\cdot \\
\cdot \\
\text{\large $\bullet$} & \cdots & \text{\large $\bullet$} & {}  & \qquad \text{ $\lambda_s$ dots.}
\end{matrix}
\end{equation*}
We label the cells of the Ferrers--Young diagram as if it were a matrix, and let $\lambda_k'$ denote the number of dots in column $k$. The \emph{hook length} of the cell $(j,k)$ in the Ferrers--Young diagram of $\Lambda$ equals
\begin{equation*}
h(j,k) \coloneqq \lambda_j + \lambda_k' -k-j+1.
\end{equation*}
If no hook length in any cell of a partition $\Lambda$ is divisible by $t$, then $\Lambda$ is a \emph{$t$-core partition}. 
A partition $\Lambda$ is said to be \emph{self-conjugate} if it remains the same when rows and columns are switched.

\begin{example}
The partition $\Lambda = (3,2,1)$ of $6$ has the Ferrers--Young diagram
\begin{equation*}
\begin{matrix}
\text{\large $\bullet$} & \text{\large $\bullet$} & \text{\large $\bullet$} \\
\text{\large $\bullet$} & \text{\large $\bullet$} \\
\text{\large $\bullet$}
\end{matrix}
\end{equation*}
and has hook lengths 
$h(1,1)=5$, $h(1,2) = 3$, $h(1,3) = 1$, $h(2,1) = 3$, $h(2,2) = 1$, and $h(3,1) = 1$. Therefore, $\Lambda$ is a $t$-core partition for all $t \not\in\{ 1,3,5 \}$. Furthermore, switching rows and columns leaves $\Lambda$ unaltered, and so $\Lambda$ is self-conjugate.
\end{example}

 The theory of $t$-core partitions is intricately linked to various areas of number theory and beyond. For example, Garvan, Kim, and Stanton \cite{garvan1990cranks} used $t$-core partitions  to investigate special cases of the famous Ramanujan congruences for the partition function $p(n)$. Furthermore, $t$-core partitions encode the modular representation theory of symmetric groups $S_n$ and $A_n$ (see e.g. \cite{MR1321575,MR671655})

For $t,n \in \N$ we let $\operatorname{c}_t(n)$ denote the number of $t$-core partitions of $n$, along with $\operatorname{sc}_t(n)$ the number of self-conjugate $t$-core partitions of $n$. In 1997, Ono and Sze \cite{ono19974} investigated the relation between $4$-core partitions and class numbers. Denote by $H(|D|)$ ($D>0$ a discriminant) the $D$-th Hurwitz class number, which counts the number of $\SL_2(\Z)$-equivalence classes of integral binary quadratic forms of discriminant $D$, weighted by $\frac{1}{2}$ times the order of their automorphism group.\footnote{Some authors write $H(D)$ instead of $H(|D|)$; in particular this notation was used in \cite{OnoRaji}.} 
Then Ono and Sze proved the following theorem.
\begin{theorem}[Theorem 2 of \cite{ono19974}]\label{Theorem: Ono-Sze}
	If $8n+5$ is square-free, then 
	\begin{equation*}
	c_4(n) = \frac{1}{2} H(32n+20).
	\end{equation*}
\end{theorem}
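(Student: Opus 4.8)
The plan is to turn the purely combinatorial quantity $c_4(n)$ into a representation number of a ternary quadratic form and then feed that into Gauss's classical three-squares theorem to manufacture the Hurwitz class number. The starting point is the generating function
\begin{equation*}
\sum_{n\ge 0} c_4(n)\, q^n = \prod_{m\ge 1}\frac{(1-q^{4m})^4}{1-q^m},
\end{equation*}
which comes from the abacus description of $t$-cores in Garvan--Kim--Stanton \cite{garvan1990cranks}. Concretely, that bijection realizes the $4$-cores of $n$ as the integer vectors $\vec a = (a_0,a_1,a_2,a_3)\in\Z^4$ satisfying $a_0+a_1+a_2+a_3=0$ together with $2\sum_i a_i^2 + (a_1+2a_2+3a_3)=n$, and I would take this parametrization as the object to analyze.

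Next I would complete the square. Writing $x_i \coloneqq 4a_i+i$, the two conditions become $x_i\equiv i \pmod 4$, $\sum_i x_i = 6$, and $\sum_i x_i^2 = 8n+14$. Eliminating the linear constraint through the orthogonal (Hadamard-type) substitution
\begin{equation*}
y_1 \coloneqq x_0-x_1+x_2-x_3,\quad y_2 \coloneqq x_0+x_1-x_2-x_3,\quad y_3 \coloneqq x_0-x_1-x_2+x_3,
\end{equation*}
and using $(\sum_i x_i)^2 + y_1^2+y_2^2+y_3^2 = 4\sum_i x_i^2$, converts the problem into $y_1^2+y_2^2+y_3^2 = 4(8n+14)-36 = 32n+20$, subject to explicit congruences on the $y_j$ inherited from $x_i\equiv i\pmod 4$. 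Since $32n+20\equiv 4\pmod 8$ forces all $y_j$ even, setting $y_j = 2z_j$ reduces everything to counting representations $z_1^2+z_2^2+z_3^2 = 8n+5$ with a prescribed parity pattern on $(z_1,z_2,z_3)$.

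The crux is then the exact counting constant. Because $8n+5\equiv 5\pmod 8$, the three squares in any representation must occupy the three distinct residue classes $\{\text{odd},\,2\bmod 4,\,0\bmod 4\}$, so the coordinate-permutation symmetry and the sign symmetry act so that exactly a $\tfrac13$-fraction of all representations place the odd coordinate in the distinguished slot, and the residue conditions $x_i\equiv i \pmod 4$ then single out one of the eight sign patterns. This should yield
\begin{equation*}
c_4(n) = \tfrac{1}{24}\, r_3(8n+5),
\end{equation*}
where $r_3$ counts representations as a sum of three squares. I expect this to be the main obstacle: one must check that the $24$-to-$1$ correspondence is genuinely exact, which requires controlling representations with a vanishing or otherwise stabilized coordinate. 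This is where I anticipate the square-free hypothesis entering, since with $8n+5$ square-free the discriminant $-4(8n+5)$ is fundamental, so that $H(32n+20)$ is an honest class number of $\Q(\sqrt{-(8n+5)})$ and the local densities underlying the three-squares formula need no correction.

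Finally I would invoke Gauss's theorem in the form $r_3(N) = 12\,H(4N)$ for $N\equiv 1,2 \pmod 4$; taking $N = 8n+5\equiv 1\pmod 4$ gives $r_3(8n+5) = 12\,H(32n+20)$, and combining with the previous display yields $c_4(n) = \tfrac{1}{24}\cdot 12\,H(32n+20) = \tfrac12 H(32n+20)$, as claimed. As a fallback for the delicate degenerate cases in the direct count, I would keep in reserve the purely modular route: the eta quotient $q^{5/8}\sum_n c_4(n)q^n = \eta(4\tau)^4/\eta(\tau)$ and the theta series $\theta(\tau)^3=\sum_n r_3(n)q^n$ are both weight $\tfrac32$ forms, so matching them on the progression $n\equiv 5\pmod 8$ reduces to a finite coefficient comparison via a Sturm bound.
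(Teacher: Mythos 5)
First, a point of comparison: the paper does not prove Theorem \ref{Theorem: Ono-Sze} at all; it is imported verbatim from Ono--Sze \cite{ono19974}, so there is no internal proof to measure your attempt against. The closest machinery inside the paper is Section \ref{Section: combinatorics}, where self-conjugate $7$-cores are sent, via abaci and lists, to representations $7n+14=x^2+y^2+z^2$ with congruence restrictions, and then to class numbers through Gauss's formula \eqref{eqn:r3Gauss}; your proposal is precisely the $t=4$ analogue of that strategy, and your preliminary algebra is all correct: $\sum_i x_i=6$, $\sum_i x_i^2=8n+14$, the Hadamard identity gives $y_1^2+y_2^2+y_3^2=32n+20$ with all $y_j$ even, and every representation of $8n+5\equiv 5\pmod{8}$ has exactly one coordinate in each of the classes odd, $2\bmod 4$, $0\bmod 4$.

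The genuine gap is the step you flag yourself: the exact $24$-to-$1$ count. Moreover, your proposed bookkeeping ($\tfrac13$ for the position of the odd coordinate, then ``one of the eight sign patterns'') is not correct as stated, even though the constant $\tfrac{1}{24}$ is. Pushing the congruences $x_i\equiv i\pmod{4}$ through the inverse of your substitution, the $4$-cores of $n$ biject with the triples $(z_1,z_2,z_3)\in\Z^3$ satisfying $z_1^2+z_2^2+z_3^2=8n+5$, $z_2\equiv 2\pmod{4}$, $z_3\equiv 0\pmod{4}$, and $z_1+z_2+z_3\equiv 5\pmod{8}$. Thus, besides placing the odd coordinate (your $\tfrac13$), one must also place the two even coordinates (a further $\tfrac12$), and the mod $8$ condition is then met by exactly \emph{two} of the eight sign patterns when $z_3\neq 0$: the class of $\epsilon_3 z_3$ modulo $8$ is independent of the sign $\epsilon_3$ because $4\mid z_3$, while $\epsilon_1z_1+\epsilon_2z_2$ runs over all four odd classes modulo $8$ as $(\epsilon_1,\epsilon_2)$ varies. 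Hence each free orbit of the signed permutation group (size $48$) contains exactly $2$ admissible triples, and each orbit with $z_3=0$ (size $24$) contains exactly $1$; in both cases the proportion is $\tfrac{1}{24}$, which closes the gap and gives $\operatorname{c}_4(n)=\tfrac{1}{24}r_3(8n+5)$. Finally, the square-free hypothesis is a red herring in your argument: neither this identity nor \eqref{eqn:r3Gauss} requires it, so your route actually proves the conclusion for every $n$ (the hypothesis in \cite{ono19974} is an artifact of their genus-theoretic method, which---like the paper's Theorem \ref{Theorem: main - quad forms}---must handle genera and imprimitive forms with care, issues your reduction never meets). Your modular-forms fallback is also sound, and it is exactly how the paper proves its own Theorem \ref{thm:sc7manyH}: a finite coefficient check justified by the valence formula.
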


 More recently Ono and Raji \cite{OnoRaji} showed similar relations between self-conjugate $7$-core partitions and class numbers. To state their result, let
 \begin{equation*}
 D_n \coloneqq \begin{cases}
 28n+56 & \text{ if } n \equiv 1 \pmod{4}, \\
 7n+14 & \text{ if } n \equiv 3 \pmod{4}.
 \end{cases}
 \end{equation*}
 \begin{theorem}[Theorem 1 of \cite{OnoRaji}]\label{Theorem: Ono-Raji}
 	Let $n \not\equiv -2 \pmod{7}$ be a positive odd integer. Then
 	\begin{equation*}
 	\operatorname{sc}_7(n) = \begin{cases}
 	\frac{1}{4} H(D_n) & \text{ if } n \equiv 1 \pmod{4}, \\
 	\frac{1}{2} H(D_n) & \text{ if } n \equiv 3 \pmod{8}, \\
 	0 & \text{ if } n \equiv 7 \pmod{8}.
 	\end{cases}
 	\end{equation*}
 \end{theorem}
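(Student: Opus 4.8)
The plan is to turn the combinatorial quantity $\operatorname{sc}_7(n)$ into a ternary theta series and then extract Hurwitz class numbers from Gauss's theorem on sums of three squares. Throughout let $r_3(N)$ denote the number of representations of $N$ as a sum of three integer squares, and keep the notation $D_n$ from the statement.

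First I would produce a generating function for $\operatorname{sc}_7$. Recall that a self-conjugate partition is determined by the distinct odd lengths of its principal (diagonal) hooks. Encoding these data on the $7$-abacus and imposing both the $7$-core condition (no gaps on any runner) and the symmetry forced by conjugation reduces the free data to three integer parameters, so that self-conjugate $7$-cores of $n$ are identified with the lattice points of a coset of a rank-$3$ lattice lying on a level set of a positive-definite ternary quadratic form $Q$. This is the self-conjugate analogue of the Garvan--Kim--Stanton vector encoding of $t$-cores. Diagonalizing $Q$ and reducing it to $x_1^2+x_2^2+x_3^2$, and comparing coefficients of $q^{D_n}$, I expect this to collapse to the clean identity
\begin{equation*}
\operatorname{sc}_7(n)=\tfrac{1}{48}\,r_3(D_n),
\end{equation*}
valid for odd $n$ with $n\not\equiv-2\pmod7$. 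Here the weight $\tfrac1{48}=\tfrac1{2^3\cdot 3!}$ is exactly the order of the group of signed permutations of three coordinates, reflecting that each self-conjugate core corresponds to one representation of $D_n$ by three squares up to order and sign. (As a sanity check, $n=1$ gives $D_1=84$ and $r_3(84)=r_3(21)=48$ coming from the single generic representation $21=4^2+2^2+1^2$, so $\operatorname{sc}_7(1)=1$, which is correct.)

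Granting this identity, the theorem reduces to class-number arithmetic. I would invoke Gauss's formula $r_3(N)=12H(4N)-24H(N)$ together with the elementary facts that $H(N)=0$ whenever $N\equiv1,2\pmod4$ and that $r_3(4N)=r_3(N)$. For odd $n$ the three cases correspond exactly to $D_n\equiv 4,3,7\pmod8$. When $n\equiv1\pmod4$ one has $D_n=4m$ with $m\equiv1\pmod4$, so $H(m)=0$ and $r_3(D_n)=r_3(m)=12H(4m)=12H(D_n)$, giving $\operatorname{sc}_7(n)=\tfrac14 H(D_n)$. When $n\equiv3\pmod8$ one has $D_n\equiv3\pmod8$, where $r_3(D_n)=24H(D_n)$, giving $\operatorname{sc}_7(n)=\tfrac12 H(D_n)$. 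When $n\equiv7\pmod8$ one has $D_n\equiv7\pmod8$, where $r_3(D_n)=0$, giving $\operatorname{sc}_7(n)=0$.

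I expect the entire difficulty to sit in the first step: extracting the exact form $Q$ and, above all, the precise coset from the self-conjugate $7$-abacus, and then verifying that after reduction every representation of $D_n$ is genuinely \emph{generic} (distinct nonzero coordinates), so that the global symmetry factor is uniformly $\tfrac1{48}$ and the identity $\operatorname{sc}_7(n)=\tfrac1{48}r_3(D_n)$ holds on the nose. This is precisely where the hypotheses enter. Oddness of $n$ fixes $D_n\bmod8$, pinning down the trichotomy above. The condition $n\not\equiv-2\pmod7$, i.e. $7\nmid D_n/7$, rules out the degenerate representations at the ramified prime $7$; indeed for the excluded value $n=5$ one has $D_5=196$ and $r_3(196)=r_3(49)=54$ on account of the non-generic representation $49=7^2+0^2+0^2$, so $\tfrac1{48}r_3(D_5)$ is not even an integer and the clean formula necessarily breaks. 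Once the combinatorial identity is established, the class-number bookkeeping above is routine.
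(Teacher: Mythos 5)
Your proposal is correct in outline, but it takes a genuinely different route from the one the paper uses to establish this statement (which the paper quotes from Ono--Raji and then reproves in generalized form). The paper's proof is modular, not combinatorial: the generating function $S(\tau)=\sum_{n\geq 0}\operatorname{sc}_7(n)q^{n+2}$ is a weight $\frac{3}{2}$ form on $\Gamma_0(28)$, an identity expressing $S$ in terms of the Hurwitz class number generating function under $U$- and $V$-operators is verified via the valence formula and a finite computer check (Theorem \ref{thm:sc7manyH}), and then $\Theta^3=12\,\mathcal{H}_{1,2}\big|U_2$ turns this into $\operatorname{sc}_7(n)=\frac{1}{48}\left(r_3(7n+14)-r_3\left(\frac{n+2}{7}\right)\right)$, i.e.\ Lemma \ref{lem:sc7Theta^3} (1); for $n\not\equiv -2\pmod{7}$ the second term vanishes and Gauss's formula \eqref{eqn:r3Gauss} yields exactly your three cases (Corollary \ref{cor:sc7oneH} (1)). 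Your combinatorial derivation of the same intermediate identity is, in essence, the content of the paper's Section \ref{Section: combinatorics}: the Garvan--Kim--Stanton list of a self-conjugate $7$-core is $[n_0,n_1,n_2,0,-n_2,-n_1,-n_0]$, and Lemma \ref{Lemma: Garvan size of lists} converts $|\Lambda|=n$ into $7n+14=(7n_0-3)^2+(7n_1-2)^2+(7n_2-1)^2$, so self-conjugate $7$-cores of $n$ are exactly the points of the coset $(4,5,6)+(7\Z)^3$ on the sphere $x^2+y^2+z^2=7n+14$ (Proposition \ref{Prop: lists to quad forms}, Corollary \ref{cor:sc7K}). The one step you defer---genericity, so that the symmetry factor is uniformly $48$---is where the hypothesis on $n$ does its work, and it needs the following lemma: if $7\mid xyz$ then, because $-1$ is a quadratic non-residue mod $7$, all of $x,y,z$ must be divisible by $7$, which forces $n\equiv -2\pmod{7}$; and if $7\nmid xyz$, then since the only multiset of nonzero squares mod $7$ summing to $0\pmod{7}$ is $\{1,2,4\}$, the coordinates are pairwise distinct and nonzero, so each signed-permutation orbit has size exactly $48$ and meets the coset exactly once. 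With that supplied, your identity $\operatorname{sc}_7(n)=\frac{1}{48}r_3(D_n)$ (equivalently $\frac{1}{48}r_3(7n+14)$, by $r_3(4N)=r_3(N)$), your $n=5$ diagnosis of why the hypothesis is needed, and the final class-number bookkeeping are all correct. As for what each approach buys: the paper's modular argument extends uniformly to even $n$ and to $n\equiv -2\pmod{7}$ (which is what Theorem \ref{thm:sc7manyH} and Corollary \ref{cor:counting} require) at the cost of a computer verification, whereas your argument is self-contained and elementary, explains the constant $\frac{1}{48}$, and is the mechanism behind the paper's map $\phi$ and Theorem \ref{Theorem: main - quad forms}.
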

 
 In particular, by combining Theorems \ref{Theorem: Ono-Sze} and \ref{Theorem: Ono-Raji} and using elementary congruence conditions, one may easily show that for $n \not\equiv 4 \pmod{7}$ and $56n+21$ square-free, 
\begin{equation}\label{Equation: 2sc7 = c4}
2 \operatorname{sc}_7(8n+1) = \operatorname{c}_4(7n+2).
\end{equation}

This fact hints at a deeper relationship between $\operatorname{sc}_{2t-1}$ and $\operatorname{c}_t$, which we investigate. Our main results pertain to the case of $t=4$. We begin by extending recent results of Ono and Raji \cite{OnoRaji}. Letting $\operatorname{sc}_7(n)$ denote the number of self-conjugate $7$-core partitions of $n$ and $(\frac{\cdot}{\cdot})$ denote the extended Jacobi Symbol, we may state our first theorem. For this, for $n\in\Q$ we set $H(n):=0$ if $n\notin\Z$ or $-n$ is not a discriminant.
\begin{theorem}\label{thm:sc7manyH}
	For every $n\in \N$, we have
	\[
	\operatorname{sc}_7(n)=\frac{1}{4}\left(H(28n+56)-H\left(\frac{4n+8}{7}\right)-2H(7n+14)+2H\left(\frac{n+2}{7}\right)\right).
	\]
\end{theorem}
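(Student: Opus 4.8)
The plan is to reduce the four-term Hurwitz class number identity to a single identity of classical weight-$\tfrac32$ theta series, and then to verify that identity inside a finite-dimensional space of modular forms. Throughout set $m:=n+2$, write $\theta(\tau):=\sum_{j\in\Z}q^{j^2}$, and let $r_3(N):=\#\{(x,y,z)\in\Z^3 : x^2+y^2+z^2=N\}$ (so $r_3(N)=0$ when $N<0$ or $N\notin\Z$), so that $\theta^3=\sum_{N\geq 0}r_3(N)q^N$.

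First I would apply Gauss' three-squares formula $r_3(N)=12\lp H(4N)-2H(N)\rp$ (valid for $N\geq 1$, and compatible with the convention $H=0$ off the discriminants). Writing $28m=4\cdot 7m$ and $\tfrac{4m}{7}=4\cdot\tfrac m7$ and grouping the right-hand side of Theorem~\ref{thm:sc7manyH} gives
\begin{equation*}
\tfrac14\lp\lp H(4\cdot 7m)-2H(7m)\rp-\lp H\lp 4\cdot\tfrac m7\rp-2H\lp\tfrac m7\rp\rp\rp=\tfrac1{48}\lp r_3(7m)-r_3\lp\tfrac m7\rp\rp,
\end{equation*}
so that Theorem~\ref{thm:sc7manyH} becomes the combinatorial statement
\begin{equation*}
48\,\operatorname{sc}_7(n)=r_3\lp 7(n+2)\rp-r_3\lp\tfrac{n+2}7\rp\qquad(n\in\N).
\end{equation*}
The right-hand side counts representations of $7(n+2)$ by three squares whose coordinates are not all divisible by $7$, the term $r_3(\tfrac{n+2}7)$ subtracting exactly the representations with $7\mid\gcd(x,y,z)$; this correction is what repairs the residue class $n\equiv-2\pmod 7$ excluded in Theorem~\ref{Theorem: Ono-Raji}. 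With the operators $U_7,V_7$ this is the generating-function identity
\begin{equation*}
48\sum_{n\geq 0}\operatorname{sc}_7(n)\,q^{n+2}=\lp U_7-V_7\rp\theta^3.
\end{equation*}

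Next I would realize the left-hand side as an explicit modular form. Using the classical parametrization of self-conjugate $(2k+1)$-core partitions by integer vectors \cite{garvan1990cranks}, for $k=3$ the series $\sum_n\operatorname{sc}_7(n)q^{n+2}$ is, up to normalization, a ternary theta series, hence a holomorphic modular form of weight $\tfrac32$. On the right, $\theta^3\in M_{3/2}(\Gamma_0(4),\chi_0)$ and the operators $U_7,V_7$ raise the level to $28$, so both sides lie in $M_{3/2}(\Gamma_0(28),\chi)$ for a common nebentypus $\chi$. After fixing the weight, level, and character on each side, I would show their difference is a cusp form and conclude by matching Fourier coefficients up to the Sturm bound; the leading coefficients already agree, since $r_3(7)=0$ and $r_3(14)=48=48\operatorname{sc}_7(0)$.

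The crux is the middle step: making the self-conjugate $7$-core generating function explicitly modular with the correct multiplier system, so that it provably lands in the same finite-dimensional space as $\lp U_7-V_7\rp\theta^3$ and a bounded coefficient comparison becomes decisive. Pinning down the precise theta parametrization (including the shift turning $q^n$ into $q^{n+2}$) and checking that the character produced by $U_7,V_7$ coincides with the one coming from the core side are the delicate points; the half-integral weight makes the nebentypus bookkeeping unforgiving. Once both weight-$\tfrac32$ forms sit in a single space, equality reduces to a finite computation, and the Gauss reduction translates the resulting theta identity back into the stated combination of Hurwitz class numbers.
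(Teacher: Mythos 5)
Your proposal is correct in outline and completable, and at bottom it uses the same mechanism as the paper — identify both sides as weight-$\frac32$ forms in a common finite-dimensional space and invoke the valence formula — but the decomposition you choose is genuinely different, and the difference is instructive. The paper proves the theorem by establishing the generating-function identity \eqref{eqn:toprove}, $S=\frac{1}{4}\mathcal{H}_{1,2}\big|(U_{14}-U_2\big|V_7)$, built from Zagier's Hurwitz class number series $\mathcal{H}$; since $\mathcal{H}$ is only mock modular, the paper must cite \cite{BKPeterssonQF} to know that the particular combinations $\mathcal{H}_{1,2}\big|(\cdots)$ are honest modular forms of weight $\frac32$ on $\Gamma_0(56)$ with character $\left(\frac{28}{\cdot}\right)$, and the valence formula then requires checking $12$ coefficients. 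You instead eliminate the class numbers at the outset via Gauss's formula $r_3(N)=12\left(H(4N)-2H(N)\right)$, reducing the theorem to
\[
48\operatorname{sc}_7(n)=r_3(7n+14)-r_3\left(\frac{n+2}{7}\right),
\]
equivalently $48S=\Theta^3\big|(U_7-V_7)$, and compare forms on $\Gamma_0(28)$. Three remarks. First, your intermediate identity is precisely Lemma \ref{lem:sc7Theta^3} (1), which the paper \emph{deduces from} the theorem; you reverse the logical order, which is legitimate since Gauss's formula is unconditional. Second, your route buys something concrete: $\Theta^3$ and its $U_7$-, $V_7$-images are classical holomorphic forms, so no mock-modular machinery is needed on the class-number side, and the level drops from $56$ to $28$, roughly halving the number of coefficients to verify. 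What remains in both approaches is the modularity of $S$ itself with precise level and character; your plan to extract this from the Garvan--Kim--Stanton parametrization is sound — the self-conjugate lists give $7(n+2)=(7n_0-3)^2+(7n_1-2)^2+(7n_2-1)^2$, so $S(7\tau)$ is a shifted ternary theta series — and the character bookkeeping you worry about is standard Shimura-type level-raising, under which $\Theta^3\big|U_7$ and $\Theta^3\big|V_7$ acquire exactly the character $\left(\frac{28}{\cdot}\right)$ that $S$ carries; this is a routine verification, not a gap. Third, note that Section \ref{Section: combinatorics} of the paper (Corollary \ref{cor:sc7K}, via abaci) proves your intermediate $r_3$-identity bijectively, so combining your Gauss reduction with that bijection would give a proof of the theorem requiring no coefficient computation at all — a cleaner endgame than the Sturm-bound check you propose. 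One small trim: there is no need to argue that the difference of the two sides is a cusp form; agreement of two elements of $M_{3/2}\!\left(\Gamma_0(28),\left(\frac{28}{\cdot}\right)\right)$ up to the valence bound already forces equality.
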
 
While Theorem \ref{thm:sc7manyH} gives a uniform formula for $\operatorname{sc}_7(n)$ as a linear combination of Hurwitz class numbers, it is also desirable to obtain a formula in terms of a single class number. For this, let $\ell\in\N_0$ be chosen maximally such that $n\equiv -2\tmod{2^{2\ell}}$ and extend the definition of $D_n$ to 
\begin{equation}\label{eqn:Dndef}
D_n:=\begin{cases}28n+56&\text{if }n\equiv 0,1\pmod{4},\\
7n+14&\text{if }n\equiv 3\pmod{4},\\
D_{\frac{n+2}{2^{2\ell}}-2}&\text{if }n\equiv 2\pmod{4},
\end{cases} 
\end{equation}
and 
\begin{equation}\label{eqn:nudef}
\nu_{n}:=\begin{cases}\frac{1}{4} &\text{if }n\equiv 0,1\pmod{4},\\ \frac{1}{2}&\text{if }n\equiv 3\pmod{8},\\ 
\nu_{\frac{n+2}{2^{2\ell}}-2}&\text{if }n\equiv 2\pmod{4},\\
0&\text{otherwise}. 
\end{cases}
\end{equation}

A binary quadratic form $[a,b,c]$ is called \begin{it}primitive\end{it} if $\gcd(a,b,c)=1$ and, for a prime $p$, \begin{it}$p$-primitive\end{it} if $p\nmid \gcd(a,b,c)$. We let $H_{p}(D)$ count the number of $p$-primitive classes of integral binary quadratic forms of discriminant $-D$, with the same weighting as $H(D)$. 
\begin{corollary}\label{cor:counting}
	For every $n\in\N$ we have
	\begin{equation*}
	\operatorname{sc}_7(n)=\nu_{n}H_7\left(D_n\right).
	\end{equation*}
\end{corollary}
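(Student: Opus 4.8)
The plan is to derive Corollary~\ref{cor:counting} from Theorem~\ref{thm:sc7manyH} by repackaging the four Hurwitz class numbers into a single $7$-primitive class number. The key bookkeeping identity is that every non-$7$-primitive class $[a,b,c]$ of discriminant $-D$ equals $7[a',b',c']$ for a class $[a',b',c']$ of discriminant $-D/49$, and that this correspondence respects $\SL_2(\Z)$-equivalence and automorphisms; hence
\[
H_7(D)=H(D)-H\!\left(\tfrac{D}{49}\right),
\]
with the convention $H(D/49)=0$ unless $49\mid D$. Since $\frac{4n+8}{7}=\frac{28n+56}{49}$ and $\frac{n+2}{7}=\frac{7n+14}{49}$, the formula of Theorem~\ref{thm:sc7manyH} collapses to
\[
\operatorname{sc}_7(n)=\tfrac14\big(H_7(28n+56)-2H_7(7n+14)\big)=\tfrac14\big(H_7(4N)-2H_7(N)\big),\qquad N:=7(n+2),
\]
using $28n+56=4\cdot 7(n+2)$. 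Everything now reduces to understanding $H_7(4N)$ in terms of $H_7(N)$.

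Second, I would record two consequences of the conductor formula $H(D_0f^2)=\frac{2h(D_0)}{w(D_0)}\sum_{d\mid f}\mu(d)\left(\frac{D_0}{d}\right)\sigma_1(f/d)$. Writing $-N=D_0f^2$ with $D_0$ fundamental: (i) if $N$ is odd then $f$ is odd and a short computation gives $H(4N)=\big(3-\left(\frac{D_0}{2}\right)\big)H(N)$, so that $H(4N)=2H(N)$ when $-N\equiv1\pmod8$ and $H(4N)=4H(N)$ when $-N\equiv5\pmod8$; and (ii) for every $K\ge 1$ the three-term relation $H(16K)-3H(4K)+2H(K)=0$ holds. Both relations transfer verbatim to $H_7$: applying $H_7(\,\cdot\,)=H(\,\cdot\,)-H(\,\cdot/49)$ and noting $49\mid 16K\Leftrightarrow49\mid4K\Leftrightarrow 49\mid K$, each identity for $H_7$ is the difference of the corresponding identity for $H$ at $K$ and at $K/49$.

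Third, I would treat the cases $n\not\equiv2\pmod4$ directly. If $n\equiv0$ or $1\pmod4$ then $-N\equiv2$ or $3\pmod4$ is not a discriminant, so $H_7(N)=0$ and $\operatorname{sc}_7(n)=\frac14H_7(4N)=\frac14H_7(28n+56)=\nu_nH_7(D_n)$, since then $D_n=28n+56$ and $\nu_n=\frac14$. If $n\equiv3\pmod4$ then $-N\equiv1\pmod4$ is an odd discriminant and relation (i) gives $\operatorname{sc}_7(n)=\frac14\big(3-\left(\frac{D_0}{2}\right)-2\big)H_7(N)=\frac14\big(1-\left(\frac{D_0}{2}\right)\big)H_7(N)$ with $D_n=N=7n+14$; checking $N\bmod8$ shows $\left(\frac{D_0}{2}\right)=-1$ for $n\equiv3\pmod8$ (giving the factor $\nu_n=\frac12$) and $\left(\frac{D_0}{2}\right)=1$ for $n\equiv7\pmod8$ (giving $\nu_n=0$), exactly matching \eqref{eqn:nudef}.

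Finally, the case $n\equiv2\pmod4$ is the crux and is handled recursively. Setting $G(M):=\frac14\big(H_7(4M)-2H_7(M)\big)$, so that $\operatorname{sc}_7(n)=G(N)$, relation (ii) for $H_7$ is precisely the statement $G(4K)=G(K)$ for all $K$. Writing $n+2=4^\ell M$ with $4\nmid M$, so that $\ell$ agrees with the exponent in \eqref{eqn:Dndef} and \eqref{eqn:nudef}, and iterating $\ell$ times yields $\operatorname{sc}_7(n)=G(7(n+2))=G(7M)=\operatorname{sc}_7(m)$ with $m:=M-2=\frac{n+2}{2^{2\ell}}-2\not\equiv2\pmod4$. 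Since $D_n=D_m$ and $\nu_n=\nu_m$ by definition, and the corollary is already established for $m$ by the previous paragraph, we conclude $\operatorname{sc}_7(n)=\nu_mH_7(D_m)=\nu_nH_7(D_n)$. I expect the main obstacle to be the clean derivation and, above all, the transfer of the class-number relations (i) and (ii) to $H_7$, where one must check that the $7$-part of the conductor behaves compatibly (in particular when $7\mid D_0$) so that subtracting the $/49$ terms leaves the relations intact.
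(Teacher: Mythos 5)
Your proposal is correct, and it takes a genuinely different route from the paper's. The paper does not return to Theorem \ref{thm:sc7manyH} at all: it works from the single-class-number formulas of Corollary \ref{cor:sc7oneH} (obtained via $r_3$ and Gauss's formula), handles $n\not\equiv -2\pmod{7}$ directly via Lemma \ref{lem:H7diff}, handles $n\equiv -2\pmod{7}$ by induction on the largest $r_n$ with $n\equiv -2\pmod{7^{2r_n+1}}$ --- using the conductor formula \eqref{eqn:Df^2}, the evaluation $C_{r,\Delta}=7^{r-1}\left(7+\left(\frac{\Delta}{7}\right)\right)$ of Lemma \ref{lem:Creval}, and multiplicativity --- and only then reduces $n\equiv 2\pmod{4}$ via Lemma \ref{lem:div4} (1). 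Your key observation, which the paper never states explicitly, is that $\frac{4n+8}{7}=\frac{28n+56}{49}$ and $\frac{n+2}{7}=\frac{7n+14}{49}$, so that Theorem \ref{thm:sc7manyH} combined with Lemma \ref{lem:H7diff} collapses to the identity $\operatorname{sc}_7(n)=\frac14\left(H_7(4N)-2H_7(N)\right)$ with $N=7(n+2)$. This eliminates all $7$-adic bookkeeping in one stroke (no induction on $r_n$, no $C_{r,\Delta}$), leaving only $2$-adic analysis: your relations (i) and (ii) are standard consequences of the conductor formula, and they do transfer to $H_7$ exactly as you claim, since $49\mid N$ forces $7\mid f$, so $-N/49$ has the same fundamental discriminant $D_0$ and the scalar factors match term by term (this also disposes of your worry about $7\mid D_0$). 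The trade-off: the paper's longer route passes through Lemma \ref{lem:sc7Theta^3} and Corollary \ref{cor:sc7oneH}, which it needs anyway for Corollaries \ref{cor:Cor2} and \ref{cor:Cor3}, whereas your argument needs only Theorem \ref{thm:sc7manyH} plus \eqref{eqn:Df^2} and is shorter and more uniform.

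One small point, which your write-up shares with the paper's own proof and is therefore not a defect relative to it: in the case $n\equiv 2\pmod{4}$ the reduction can bottom out at $m=\frac{n+2}{2^{2\ell}}-2\in\{-1,0\}$ (e.g. $n=2,6,14$), values outside the range in which the earlier cases were established. The fix is harmless --- the collapsed formula extends to these values, giving $G(14)=\frac14 H_7(56)=1=\operatorname{sc}_7(0)$ and $G(7)=\frac14\left(H_7(28)-2H_7(7)\right)=0$, consistent with $\nu_{-1}=0$ --- but it deserves a sentence in a complete write-up.
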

\begin{remark*}
	For $n\not\equiv -2\pmod{7}$, one has 
	$H(D_n)=H_7(D_n)$ and hence
	the cases $n\equiv 1,3 \pmod{4}$
	of Corollary \ref{cor:counting}
	with $n\not\equiv -2 \pmod{7}$ are covered by Theorem \ref{Theorem: Ono-Raji}.
\end{remark*}

For $n+2$ squarefree, we may use Dirichlet's class number formula to obtain another representation for $\operatorname{sc}_7(n)$; Ono and Raji \cite[Corollary 2]{OnoRaji} covered
the case that $n\not\equiv -2\pmod{7}$ is odd.
\begin{corollary}\label{cor:Cor2}
	If $n\in\N$ is an integer for which $n+2$ is squarefree, then 
	\[
	\operatorname{sc}_7(n)=	-\frac{\nu_{n}}{D_n}
	\begin{cases}\vspace{3pt}
	\sum_{m=1}^{D_n-1}\left(\frac{-D_n}{m}\right) m&\text{if }n\not\equiv 
	-2
	\pmod{7},\\ \vspace{3pt}7^2\left(7+\left(\frac{\frac{D_n}{7^2}}{7}\right)\right)\sum_{m=1}^{\frac{D_n}{7^2}-1}\left(\frac{-\frac{D_n}{7^2}}{m}\right) m&\text{if }n\equiv 
	-2
	\pmod{7}.
	\end{cases}
	\]
\end{corollary}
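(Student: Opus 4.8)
The strategy is to combine Corollary \ref{cor:counting}, which gives $\operatorname{sc}_7(n)=\nu_n H_7(D_n)$, with Dirichlet's class number formula, keeping everything in the Hurwitz normalization so that unit contributions never appear explicitly. First I would dispose of the degenerate branches: since $n+2$ is squarefree we cannot have $n\equiv 2\pmod 4$ (else $4\mid n+2$), so the recursive clauses of \eqref{eqn:Dndef} and \eqref{eqn:nudef} are vacuous; and when $n\equiv 7\pmod 8$ we have $\nu_n=0$, whence both sides vanish by Corollary \ref{cor:counting}. Thus it suffices to treat $n\equiv 0,1\pmod 4$ (where $D_n=28(n+2)$, $\nu_n=\tfrac14$) and $n\equiv 3\pmod 8$ (where $D_n=7(n+2)$, $\nu_n=\tfrac12$); in all these cases $D_n>4$. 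The single analytic input is the identity
\[
H(D)=-\frac1D\sum_{m=1}^{D-1}\left(\frac{-D}{m}\right)m \qquad (-D \text{ a fundamental discriminant}),
\]
which holds with no unit correction because the weight $\frac{2}{w}$ built into the Hurwitz class number cancels the factor $\frac{w}{2}$ in the classical Dirichlet formula. I would also record the elementary decomposition $H_7(D)=H(D)-H(D/49)$ (with $H(x):=0$ for $x\notin\N$): a form $[a,b,c]$ of discriminant $-D$ fails to be $7$-primitive precisely when $7\mid\gcd(a,b,c)$, and $[a,b,c]\mapsto[a/7,b/7,c/7]$ is then a weight-preserving bijection onto all forms of discriminant $-D/49$.

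For $n\not\equiv -2\pmod 7$ we have $7\nmid(n+2)$, hence $49\nmid D_n$ and $H(D_n/49)=0$. A short case analysis over $n\bmod 8$ (tracking the $2$-adic valuation) shows, using that $n+2$ is squarefree and prime to $7$, that $-D_n$ is a fundamental discriminant; therefore $H_7(D_n)=H(D_n)$ and the displayed formula gives the first case directly. For $n\equiv -2\pmod 7$ we have that $7$ divides $n+2$ exactly once, so $-D_n=49\Delta$ with $\Delta:=-D_n/49$ fundamental and $7\nmid\Delta$ (again by a residue computation). Here every form of discriminant $-D_n$ has content dividing $7$, so the $7$-primitive forms are exactly the primitive ones, i.e.\ the forms belonging to the order of conductor $7$ in $\Q(\sqrt{\Delta})$. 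The conductor formula for class numbers of orders, rewritten in the Hurwitz normalization so that the unit indices collapse, yields
\[
H_7(D_n)=H\!\left(\frac{D_n}{49}\right)\left(7-\left(\frac{\Delta}{7}\right)\right)=H\!\left(\frac{D_n}{49}\right)\left(7+\left(\frac{D_n/49}{7}\right)\right),
\]
the last step using $\Delta=-D_n/49$ and $\left(\frac{-1}{7}\right)=-1$. Applying the displayed formula to $H(D_n/49)$ and collecting the resulting factor $49=7^2$ produces the second case.

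The conceptual work is minimal; the main obstacle is bookkeeping. One must verify across the residue classes of $n$ modulo $8$ (and, in the second case, modulo $7$) that $-D_n$, respectively $-D_n/49$, is genuinely fundamental with the correct power of $2$, and one must track the Kronecker symbol through the conductor formula. The one point deserving genuine care is the unit weighting: the discriminants $\Delta=-3$ (e.g.\ $n=19$) and $\Delta=-4$ (e.g.\ $n=5$) do occur, and it is precisely the Hurwitz normalization of $H$ that makes the uniform formula valid at these discriminants, since the factors $w(\Delta)$ coming from the conductor formula and from Dirichlet's formula cancel against the $\frac{2}{w(\Delta)}$ hidden in $H(D_n/49)$.
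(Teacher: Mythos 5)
Your proof is correct, but it reaches the paper's key intermediate identity \eqref{SH} by a different mechanism, so a comparison is in order. The paper obtains \eqref{SH} by quoting Corollary \ref{cor:sc7oneH} (1), (2), which rest on the theta-series identity of Lemma \ref{lem:sc7Theta^3} and Gauss's formula \eqref{eqn:r3Gauss}; the only remaining step is Dirichlet's formula \eqref{eqn:ClassNumberFormulaCorrect} applied to the fundamental discriminant $-D_n$ (resp.\ $-D_n/7^2$), exactly as you do at the end. You instead start from Corollary \ref{cor:counting}, split $H_7(D)=H(D)-H\left(D/7^2\right)$ as in Lemma \ref{lem:H7diff}, and, when $7\mid n+2$, recover the factor $7+\left(\frac{D_n/7^2}{7}\right)$ from the conductor formula for the order of conductor $7$ --- in effect inverting the paper's own derivation of Corollary \ref{cor:counting} from Corollary \ref{cor:sc7oneH} via \eqref{eqn:Df^2} and Lemma \ref{lem:Creval}. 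This is legitimate and non-circular, since the paper's proof of Corollary \ref{cor:counting} in Section \ref{sec:squarepartcounting} nowhere invokes Corollary \ref{cor:Cor2}, but you should flag that you are relying on a result the paper proves only after this corollary. Your route also silently uses that every form of discriminant $-D_n$ has content in $\{1,7\}$ (so that $7$-primitive coincides with primitive); this is true, by the same mod $4$ computation that yields fundamentality, but it deserves the explicit verification you demand elsewhere. What your route buys: the degenerate branches ($n\equiv 2\pmod{4}$ impossible, $n\equiv 7\pmod{8}$ trivial since $\nu_n=0$) are dispatched explicitly rather than implicitly, and the unit-weighting subtlety is genuinely engaged --- the discriminants $-4$ (at $n=5$) and $-3$ (at $n=19$) really do occur, and your observation that the Hurwitz normalization makes both the conductor formula and Dirichlet's formula uniform there is precisely the point the paper absorbs by citing Zagier's form \eqref{eqn:ClassNumberFormulaCorrect} of the class number formula. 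What it costs: length, and a dependence chain (Corollary \ref{cor:counting} $\Rightarrow$ Corollary \ref{cor:Cor2}) opposite to the paper's, where the shorter path through Corollary \ref{cor:sc7oneH} was already available.
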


The following corollary relates $\operatorname{sc}_7(m)$ with $m+2$ not necessarily squarefree
to $\operatorname{sc}_7(n)$ with $n+2$ squarefree, for which Corollary \ref{cor:Cor2} applies. 
The cases $\ell=r=0$ with $n\not\equiv 
-2
\pmod{7}$ odd were proven in \cite[Corollary 3]{OnoRaji}.
For this $\mu$ denotes the M\"{o}bius function and $\sigma(n):=\sum_{d\mid n} d$.

\begin{corollary}\label{cor:Cor3}
	If $n\in\N$ satisfies $n+2$ squarefree, $\ell,r\in\N_0$, and $f\in\N$ with $\gcd(f,14)=1$, then
	\[
	\operatorname{sc}_7\left((n+2)
	2^{2\ell}f^27^{2r}	-2\right)=7^r\operatorname{sc}_7(n)\sum_{d|f} \mu(d)\left(\frac{-D_n}{d}\right)\sigma \left(\frac{f}{d}\right).
	\]
\end{corollary}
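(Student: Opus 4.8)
The plan is to reduce everything to Corollary \ref{cor:counting} and then to a purely class-number-theoretic identity. Write $m := (n+2)2^{2\ell}f^27^{2r}-2$, so that $m+2 = (n+2)2^{2\ell}f^27^{2r}$. The first task is to evaluate $D_m$ and $\nu_m$ in terms of $D_n$ and $\nu_n$ using \eqref{eqn:Dndef} and \eqref{eqn:nudef}. Since $\gcd(f,14)=1$, both $f$ and $7$ are odd, so $f^27^{2r}\equiv 1\pmod 8$; hence multiplying $n+2$ by $f^27^{2r}$ preserves the residue of $n$ modulo $8$. Because $n+2$ is squarefree we have $v_2(n+2)\in\{0,1\}$, so in particular the base point $n$ itself lies in the non-recursive cases $n\equiv 0,1,3\pmod 4$; the factor $2^{2\ell}$ affects $D_m,\nu_m$ only through the recursive branch $m\equiv 2\pmod 4$ of \eqref{eqn:Dndef}--\eqref{eqn:nudef}, which strips precisely the even $2$-power back off. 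Carrying out these elementary case checks, I expect to obtain $\nu_m=\nu_n$ and $D_m = D_n f^27^{2r}$ in every case.

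Granting this, Corollary \ref{cor:counting} gives $\operatorname{sc}_7(m)=\nu_n H_7(D_n f^27^{2r})$ and $\operatorname{sc}_7(n)=\nu_n H_7(D_n)$. If $\nu_n=0$ both sides of the asserted identity vanish, so assume $\nu_n\neq 0$. The corollary then reduces to the class-number identity
\[
H_7\!\left(D_n f^27^{2r}\right)=7^r H_7(D_n)\sum_{d\mid f}\mu(d)\left(\frac{-D_n}{d}\right)\sigma\!\left(\frac{f}{d}\right).
\]
I would prove this by separating the coprime contributions of $f$ and of $7$. For the $f$-part, note that for $7\nmid n+2$ the discriminant $-D_n$ is fundamental (a short residue computation from $D_n=28(n+2)$ or $7(n+2)$ with $n+2$ squarefree), so that $H_7(D_n)=H(D_n)$, and the identity
\[
H(Df^2)=H(D)\sum_{d\mid f}\mu(d)\left(\frac{-D}{d}\right)\sigma\!\left(\frac{f}{d}\right)
\]
is the classical relation between Hurwitz class numbers of discriminants differing by the odd square factor $f^2$ with $-D$ fundamental --- equivalently, the action of the Hecke operator $T_{f^2}$ on Zagier's weight-$\tfrac32$ Eisenstein series $\sum_{N\ge 0}H(N)q^N$. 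When $7\mid n+2$ the same relation holds verbatim with $H$ replaced throughout by the $7$-primitive count $H_7$, since $\gcd(f,7)=1$ means the $f^2$-scaling commutes with the restriction to $7$-primitive forms.

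For the $7$-part I would establish $H_7(D\cdot 7^{2r})=7^r H_7(D)$. This is exactly the point where working with the $7$-primitive count $H_7$ rather than the full Hurwitz number is essential: passing from discriminant $-D$ to $-49D$ multiplies the number of $7$-primitive classes by precisely $7$ (and not $7-(\frac{-D}{7})$ or $7+1$), which I would verify by exhibiting, for each $7$-primitive class of discriminant $-D$, the seven $7$-primitive classes of discriminant $-49D$ lying above it, and iterating $r$ times. Multiplying the $f$-part and the $7$-part and using $\gcd(f,7)=1$ yields the displayed identity, completing the proof. The main obstacle is precisely this last bookkeeping with $H_7$: one must track $7$-primitivity carefully through both the $f^2$- and $7^{2r}$-scalings and confirm that the clean factor $7^r$ (with no Legendre-symbol correction at $7$) emerges, which is what forces the use of $H_7$ in place of $H$ and aligns the statement with the recursive definition of $D_n$ in the case $7\mid n+2$.
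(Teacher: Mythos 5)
Your proposal takes a genuinely different route from the paper's, and its architecture is sound. The paper never passes through $H_7$ or Corollary \ref{cor:counting} at all: it first strips the factors $2^{2\ell}$ and $7^{2r}$ using Lemma \ref{lem:div4} (so it never needs to evaluate $D_m$ and $\nu_m$ for the large argument $m$), and then treats $(n+2)f^2-2$ by splitting into $n\not\equiv -2\pmod 7$ (where $-D_n$ is fundamental and Corollary \ref{cor:sc7oneH} (1) plus \eqref{eqn:Df^2} apply directly to $H$) and $n\equiv -2\pmod 7$ (where $-\frac{D_n}{7^2}$ is fundamental and Corollary \ref{cor:sc7oneH} (2) is used). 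You instead push everything through the uniform formula $\operatorname{sc}_7(m)=\nu_m H_7(D_m)$; this is legitimate, since the paper proves Corollary \ref{cor:counting} independently of the present corollary (no circularity), and your claims $\nu_m=\nu_n$ and $D_m=D_nf^27^{2r}$ are correct, with exactly the elementary checks you sketch. What your route buys is the absence of a case split at the level of $\operatorname{sc}_7$; the price is that the case split and the conductor bookkeeping reappear inside the $H_7$-identity, where you essentially re-derive the paper's own tools (Lemma \ref{lem:H7diff}, multiplicativity of the divisor sum, Lemma \ref{lem:Creval}). Your $f$-part is fine: in the case $7\mid n+2$ the assertion that the $f^2$-scaling "commutes" with $7$-primitivity is made rigorous in two lines by writing $H_7(D_nf^2)=H(D_nf^2)-H\left(\frac{D_nf^2}{7^2}\right)$, applying \eqref{eqn:Df^2} to both terms, and using multiplicativity.

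There is, however, one claim that must be repaired: your parenthetical assertion that passing from $-D$ to $-49D$ multiplies the number of $7$-primitive classes "by precisely $7$" is false as a general property of $H_7$. If $7\nmid D$, then $H_7(49D)=\left(7-\left(\frac{-D}{7}\right)\right)H_7(D)$; for instance $H_7(4)=\frac12$ while, by Lemma \ref{lem:H7diff} and \eqref{eqn:Df^2}, $H_7(196)=H(196)-H(4)=\frac92-\frac12=4=8\,H_7(4)$. The clean factor $7$ is not forced by working with $H_7$ alone: it requires $7\mid D$, so that either $7$ divides the fundamental part (character at $7$ vanishes) or $7$ divides the conductor, and only then does every class of discriminant $-D$ have exactly seven $7$-primitive classes of discriminant $-49D$ above it. Your application is saved because $7\mid D_n$ always holds by \eqref{eqn:Dndef} (indeed $7\,\|\,D_nf^2$ when $7\nmid n+2$ and $7^2\,\|\,D_nf^2$ when $7\mid n+2$), so every discriminant appearing in your iteration is divisible by $7$; but this hypothesis must be invoked explicitly, or else the first step of the iteration is unjustified. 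Once it is, your identity $H_7\left(D_nf^2\cdot 7^{2r}\right)=7^rH_7\left(D_nf^2\right)$ also follows from the paper's own lemmas (Lemma \ref{lem:H7diff}, \eqref{eqn:Df^2}, and Lemma \ref{lem:Creval}, whose content is exactly $C_{r+1,\Delta}=7C_{r,\Delta}$ for $r\geq 1$), with no new fiber-counting argument needed.
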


We also provide a combinatorial explanation for Corollary \ref{cor:counting}. To do so, we first extend techniques of Ono and Sze \cite{ono19974} and explicitly describe the possible abaci (defined in Section \ref{Section: combinatorics}) of self-conjugate $7$-core partitions . Then, in \eqref{Equation: defnintion phi} below we construct an explicit map $\phi$ sending self-conjugate $7$-core partitions to binary quadratic forms, via abaci and extended $t$-residue diagrams (defined in Section \ref{Section: combinatorics}).

 In order to describe the image of this map, for a prime $p$ and a discriminant $D=\Delta f^2$ with $\Delta$ fundamental, we call a binary quadratic form of discriminant $D$ \begin{it}$p$-totally imprimitive\end{it} if the power of $p$ dividing $\gcd(a,b,c)$ equals the power of $p$ dividing $f$ (i.e., if the power of $p$ dividing $\gcd(a,b,c)$ is maximal). Furthermore, recall that two binary quadratic forms of discriminant $D$ are said to be in the same {\it genus} if they represent the same values in $(\Z/D\Z)^*$. The {\it principal binary quadratic form} of discriminant $D$, which acts as the identity under Gauss's composition laws, is defined by $u^2+Dv^2 \text{ if } D \equiv 0 \pmod{4}$ and $u^2+uv+\frac{D-1}{4}v^2 \text{ if } D \equiv 1 \pmod{4}$. We call the genus containing the principal binary quadratic form of discriminant $D$ the {\it principal genus}. The image of $\phi$ is then described in the following theorem.
\begin{theorem}\label{Theorem: main - quad forms}
For every $n\in\N$, the image of $\phi$ is a unique non-principal genus of $7$-primitive and $2$-totally imprimitive binary quadratic forms with discriminant $-28n-56$.
 Moreover, suppose that $\ell$ is chosen maximally such that $n\equiv -2\tmod{2^{2\ell}}$ and $\frac{7n+14}{2^{2\ell}}$ has $r$ distinct prime divisors. Then every equivalence class in this genus is the image of $\nu_n 2^r$ many self-conjugate $7$-cores of $n$.
\end{theorem}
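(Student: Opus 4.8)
The plan is to realize $\phi$ concretely through the abacus description of self-conjugate $7$-cores developed in Section \ref{Section: combinatorics} and then to recognize it as the classical Gauss correspondence between ternary representations and binary quadratic forms. First I would record that self-conjugacy forces the bead configuration on the seven runners to be anti-symmetric, so that a self-conjugate $7$-core of $n$ is encoded by a single integer vector $(a_0,a_1,a_2)\in\Z^3$, the middle runner contributing $0$. Writing the size of the associated partition and completing the square, one obtains the Diophantine identity
\begin{equation*}
(7a_0-3)^2+(7a_1-2)^2+(7a_2-1)^2=7n+14,
\end{equation*}
so that $\operatorname{sc}_7(n)$ equals the number of representations of $N:=7(n+2)$ as $x^2+y^2+z^2=N$ subject to $(x,y,z)\equiv(-3,-2,-1)\pmod 7$. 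Since $(-3)^2+(-2)^2+(-1)^2\equiv 0\pmod 7$, each vector $\vec v=(x,y,z)$ is primitive and isotropic modulo $7$. The map $\phi$ of \eqref{Equation: defnintion phi} is then, up to the chosen normalization, the map sending $\vec v$ to the binary form induced on the rank-two lattice $\vec v^{\perp}\cap\Z^3$; its Gram determinant is $N$, so the resulting form has discriminant $-4N=-28n-56$, as asserted.

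Next I would pin down the genus. Membership in a single genus is a purely local statement, so the task reduces to checking the $p$-adic type of the induced form at every prime $p$. Away from $2$ and $7$ the local behavior is the generic one for $x^2+y^2+z^2$ and is the same for all representation vectors; at $p=7$ the primitivity and isotropy of $\vec v$ modulo $7$ force the induced form to be $7$-primitive with a common $7$-adic class; at $p=2$ a short $2$-adic computation, using that $N=7(n+2)$ controls the residues of the three squares modulo $8$, shows that $\vec v^{\perp}$ is an even lattice of maximal $2$-content, i.e.\ that the induced form is $2$-totally imprimitive. Since all representation vectors share the same local invariants at every prime, their images lie in one genus. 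To see that this genus is non-principal, I would evaluate a genus character (a Jacobi symbol attached to a prime divisor of the discriminant) on a value represented by the induced form and check that it equals $-1$; equivalently, the residue class $(-3,-2,-1)\pmod 7$ is not the one carried by the principal form, so the image cannot be the principal genus. Alternatively, non-principality is read off from the fact that the image accounts for only a proper subset of the $7$-primitive classes counted by $H_7(D_n)$.

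Finally I would compute the fibers of $\phi$. The $\SO_3(\Z)$-action by signed permutations partitions the representation vectors, and the stabilizer of a class under the relevant part of this action, together with the proper automorphs of the corresponding binary form, accounts for the factor $\nu_n$; this is exactly the factor in Corollary \ref{cor:counting}, and the cases $n\equiv 0,1\pmod 4$, $n\equiv 3\pmod 8$, and $n\equiv 7\pmod 8$ should be treated separately as there, with the even case $n\equiv 2\pmod 4$ reduced via the recursion defining $\nu_n$. The remaining factor $2^r$, where $r$ is the number of distinct prime divisors of $\tfrac{7n+14}{2^{2\ell}}$, arises from the non-squarefree part of $N$: each odd prime $p\mid N$ contributes a two-to-one ambiguity when lifting a class back to a representation, reflecting the two local cosets at $p$, and multiplicativity of the three-squares representation numbers over the prime-power divisors of $N$ assembles these into $2^r$. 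Combining, every class in the genus is hit $\nu_n 2^r$ times, which is consistent with $\operatorname{sc}_7(n)=\nu_n H_7(D_n)$ once one knows that the genus contains $H_7(D_n)/2^r$ classes.

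The hard part will be the precise bookkeeping in the last two steps: identifying the exact genus (rather than merely showing the image lies in one genus) and proving that the fiber size is uniformly $\nu_n 2^r$. The latter is delicate because $N=7(n+2)$ is typically not squarefree, so the clean squarefree three-squares/binary-forms dictionary does not apply directly; one must track how representations distribute among the classes of the genus as the valuation $2\ell$ and the factorization of $N/2^{2\ell}$ vary, and reconcile the combinatorial symmetries of the abacus (sign changes and the middle-runner degeneracy) with the arithmetic automorphs of the binary forms. Matching these two symmetry counts case-by-case in $n \bmod 8$ and $n \bmod 7$ is where the main effort lies.
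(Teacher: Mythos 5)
Your skeleton---abaci $\to$ ternary representations of $7n+14$ with $7\nmid xyz$ $\to$ Gauss's correspondence with binary forms of discriminant $-28n-56$---matches the paper's setup, but that part is already established before the theorem (Proposition \ref{Prop: lists to quad forms}, Corollary \ref{cor:sc7K}, and the map \eqref{Equation: defnintion phi}); the content of the theorem is precisely the two steps you defer, and both are left with genuine gaps. The most serious one is the fiber count. Your proposed mechanism for the factor $2^r$---that it ``arises from the non-squarefree part of $N$,'' with each odd prime $p\mid N$ contributing a two-to-one lifting ambiguity---is wrong, and is internally inconsistent with the definition of $r$ as the number of \emph{distinct} prime divisors of $\frac{7n+14}{2^{2\ell}}$: when $7(n+2)$ is squarefree there is no non-squarefree part at all, yet the theorem still asserts fibers of size $\nu_n2^r$ with $r$ as large as the number of prime factors. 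The factor $2^r$ is a genus-theory phenomenon, not a consequence of imprimitive representations. The paper's proof (for $n\equiv 0,1\pmod 4$) uses Gauss's counting from the Disquisitiones: $\operatorname{CL}(-28n-56)$ has $2^{r-1}$ genera of $k$ classes each, the set $\mathfrak{M}$ of tuples $(m_0,m_1,m_2,n_0,n_1,n_2)$ representing a pair $\bigl((x,y,z),f_j\bigr)$ has size $3\cdot 2^{r+3}k$ with each class $f_j$ represented exactly $3\cdot 2^{r+3}$ times, and dividing by the redundancy $8\cdot 6\cdot 2$ (signed permutations of $(x,y,z)$ and the sign on the tuple) gives exactly $\frac{3\cdot 2^{r+3}}{8\cdot 6\cdot 2}=2^{r-2}=\nu_n2^r$ preimages per class. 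Without this counting input (or a worked-out equivalent), your ``$\SO_3(\Z)$ stabilizers plus local cosets'' sketch does not produce the stated fiber size, and no amount of case analysis in $n\bmod 8$ and $n\bmod 7$ will fix a mechanism that predicts a trivial factor in the squarefree case.

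The second gap is non-principality. You float three alternatives (evaluating a genus character, comparing residue classes mod $7$ with ``the one carried by the principal form,'' and observing that the image is a proper subset of the $7$-primitive classes), but none is carried out, and the third is circular: it presupposes knowing how large the image is, which is exactly what is being proven. The paper's argument is short and concrete: as in Ono--Sze, a ternary representation whose associated binary form lies in the principal genus must have one of $x,y,z$ equal to zero, and this cannot happen for elements of $K(7n+14)$ since all of $x,y,z$ are nonzero modulo $7$. Some argument of this specificity is required. On the positive side, your idea of establishing single-genus membership by computing local invariants of the lattice $\vec v^{\perp}\cap\Z^3$ at every prime is a legitimate alternative to the paper's appeal to Gauss, and your reduction of $n\equiv 2\pmod 4$ agrees in spirit with the paper's (all three squares must be even when the sum is divisible by $4$); but the difficulty you identify as the crux---that $N$ is typically not squarefree---is not where the real work lies.
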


Note that Theorem \ref{Theorem: main - quad forms} along with \cite[Theorem 6]{ono19974} provides a combinatorial explanation for \eqref{Equation: 2sc7 = c4}.
The cases $t\in\{2,3\}$ are simple to describe, and immediately imply that relationships similar to \eqref{Equation: 2sc7 = c4} along arithmetic progressions do not exist for $t\in\{2,3\}$, which we see in Section \ref{Sec: t=2,3}. We prove a similar result for $t=5$ in Proposition \ref{Prop: t=5}. Based on these results we offer the following conjecture, along with partial results on possible values of $t \pmod{6}$ along with the possible shapes of arithmetic progressions in Section \ref{Section: t>5}.
\begin{conjecture}\label{Theorem: main intro}
	The only occurrence of arithmetic progressions for which $\operatorname{c}_t$ and $\operatorname{sc}_{2t-1}$ agree up to integer multiples non-trivially (even asymptotically) is when $t=4$. 
\end{conjecture}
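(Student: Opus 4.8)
The plan is to translate the combinatorial identity into an identity of modular forms, exploiting that both $\operatorname{c}_t$ and $\operatorname{sc}_{2t-1}$ are counted by quadratic forms. Via the classical bijection between $t$-cores and vectors in the root lattice $A_{t-1}$, the generating function $\sum_{n\ge0}\operatorname{c}_t(n)q^n=\prod_{m\ge1}(1-q^{tm})^t/(1-q^m)$ is, up to a rational power of $q$, the theta series of a positive-definite form of rank $t-1$, hence a modular form of weight $\frac{t-1}{2}$. Using the abacus description of self-conjugate cores from Section \ref{Section: combinatorics}, the series $\sum_{n\ge0}\operatorname{sc}_{2t-1}(n)q^n$ is likewise a theta series of a rank-$(t-1)$ form, and so is modular of the \emph{same} weight $\frac{t-1}{2}$; for $t=4$ both are ternary of weight $\frac32$, which is exactly the class-number regime of Theorem \ref{Theorem: Ono-Sze} and Theorem \ref{thm:sc7manyH}. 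The first step is to make these two identifications precise, fixing the level and the multiplier system, and to record that the weights always coincide. This shared weight is what makes a comparison possible at all, and it signals that any agreement ought to be rigid.

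The second step is to isolate the nontrivial content. An identity $a\,\operatorname{sc}_{2t-1}(\alpha n+\beta)=b\,\operatorname{c}_t(\gamma n+\delta)$ can hold \emph{trivially} when both sides vanish identically along the progression, as in the branch $n\equiv7\pmod8$ of Theorem \ref{Theorem: Ono-Raji}; such degeneracies are read off from the support of the two quadratic forms modulo small powers of $2$ and of $t$, and are set aside. For the remaining progressions, applying a sieving (twisting) operator to extract the arithmetic progression turns the conjectured relation into an equality of two modular forms of weight $\frac{t-1}{2}$ on a common congruence subgroup. I would then decompose each side into its Eisenstein and cuspidal parts, and via the map $\phi$ of Theorem \ref{Theorem: main - quad forms} keep track of which genus the theta series represents.

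The decisive step is to compare the Eisenstein components, which carry the main asymptotics. By Siegel--Weil the Eisenstein part of a theta series is the average over its genus, and its Fourier coefficients are given by the singular series, an Euler product of local densities of the underlying form; once the weight is $\ge\frac32$ these coefficients asymptotically dominate the cuspidal contribution. Consequently, even \emph{asymptotic} proportionality of $\operatorname{sc}_{2t-1}$ and $\operatorname{c}_t$ along a progression forces the two singular series to be proportional prime by prime. One then computes the local densities explicitly for the $A_{t-1}$-form attached to $\operatorname{c}_t$ and for the self-conjugate form attached to $\operatorname{sc}_{2t-1}$, and checks that the resulting Euler products---in particular their local factors at $p=2$, at the primes dividing $t$, and their global $L$-function normalization---can be matched along some progression only when $t=4$, where both collapse to the Hurwitz class-number Eisenstein series.

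The main obstacle is carrying out this local-density comparison uniformly for all $t>5$. The computation is sensitive to $t\bmod 6$---equivalently to whether the weight $\frac{t-1}{2}$ is integral or half-integral and to the splitting type of the relevant primes---which is precisely why only partial results are available there, as indicated in Section \ref{Section: t>5}. For $t\in\{2,3,5\}$ the weights $\tfrac12,1,2$ are small enough that the relevant spaces of modular forms are completely explicit, so the Eisenstein comparison can be done by hand and no nontrivial progression identity survives; the value $t=4$ persists precisely because the two weight-$\frac32$ Eisenstein series are both proportional to $\sum_{N}H(N)q^N$, as made explicit in Theorem \ref{thm:sc7manyH} and \cite{ono19974}. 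Converting the singular-series incompatibility into an unconditional statement, uniform in the modulus and offset of the progression, is the step I expect to resist a short argument.
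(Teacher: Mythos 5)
The statement you set out to prove is Conjecture \ref{Theorem: main intro}, which the paper itself leaves open: it establishes only the cases $t\in\{2,3,5\}$ (Section \ref{Sec: t=2,3} and Proposition \ref{Prop: t=5}) and, for $t\geq 6$, the partial results that any such progression forces $t\equiv 1\pmod{6}$, $M_2=4M_1$, and $m_2=4m_1+\frac{t^2-1}{6}$ (Section \ref{Section: t>5}). Your proposal does not close this gap either. The setup is correct in outline: both generating functions are theta series of shifted rank-$(t-1)$ lattices, both of weight $\frac{t-1}{2}$, and for large enough weight the Eisenstein part carries the asymptotics. But the decisive claim --- that the two Euler products of local densities can be proportional along some progression only when $t=4$ --- is asserted, not carried out, and you concede as much in your closing paragraph. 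This is not a peripheral technicality; it is exactly the open problem. Up to elementary archimedean factors, the Fourier coefficients of your two Eisenstein/Siegel--Weil components are the singular series $A_t(n)$ and $\mathcal{A}_{2t-1}(n)$ appearing in \eqref{Equation: asymps c_t} and \eqref{Equation: asymps for sc_t}, so your program, once unwound, terminates at precisely the question the paper declares difficult and leaves unresolved: showing that the ratio of these singular series is never constant along an arithmetic progression. A proof would require uniform, explicit control of the local densities at $p=2$, at $p\mid t$, and at the ramified primes for all $t\equiv 1\pmod 6$, or some other rigidity input, and neither you nor the paper supplies it.

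Two smaller points. First, for $t\in\{2,3\}$ the weights are $\frac12$ and $1$, where the Eisenstein-dominance mechanism you invoke does not exist at all: weight-$\frac12$ forms are lacunary unary theta series, and weight-$1$ Eisenstein coefficients do not grow, so ``the Eisenstein comparison can be done by hand'' is not a meaningful fallback there; the paper instead compares the explicit lacunary and divisor-type descriptions directly. Relatedly, your blanket claim that Eisenstein dominates cuspidal once the weight is at least $\frac32$ needs care at weight exactly $\frac32$, where unary theta series have coefficients of the same order as class numbers on square classes --- this is visible in the vanishing branch $n\equiv 7\pmod 8$ of Theorem \ref{Theorem: Ono-Raji}. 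Second, for $t=5$ your strategy is in substance what Proposition \ref{Prop: t=5} does via Alpoge's exact formula and the Hasse--Weil bound, but note what the paper must still do after reducing to main terms: it constructs primes $\ell$ and $p$ with $\left(\frac{\ell}{5}\right)=-1$ and $3n(p)+10=(3m+10)p\ell$ lying inside the given progression, in order to force the ratio of $\sigma$ to $\sigma_5$ to oscillate. It is precisely this kind of explicit non-constancy argument, carried out uniformly in $t$, $M_1$, and $m_1$, that your plan would need at the singular-series stage, and that is the missing step.
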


The paper is organised as follows. In Section \ref{sec:manyone}, we provide proofs for Theorem \ref{thm:sc7manyH} and Corollary \ref{cor:Cor2}, Corollaries \ref{cor:counting} and \ref{cor:Cor3} are shown in Section \ref{sec:squarepartcounting}.  Section \ref{Section: combinatorics} is dedicated to providing a combinatorial explanation of Theorem \ref{Theorem: Ono-Raji} and its generalization in Corollary \ref{cor:counting}. In Section \ref{Section: other t and conjecture} we prove Conjecture \ref{Theorem: main intro} in the cases $t\in\{2,3,5\}$ and provides partial results for larger $t$.

\section*{Acknowledgments}
The research of the first author is supported by the Alfried Krupp Prize for Young University Teachers of the Krupp foundation and has received funding from the European Research Council (ERC) under the European Union's Horizon 2020 research and innovation programme (grant agreement No. 101001179).
The research of the second author was supported by grants from the Research Grants Council of the Hong Kong SAR, China (project numbers HKU 17301317 and 17303618). 
The authors thank Ken Ono for insightful discussions pertaining to this paper, and for hosting the third author at the University of Virginia during which this research was partially completed. The authors also thank Andreas Mono for useful comments on an earlier draft. The authors also thank the referees for helpful comments.

\section{Proofs of Theorem \ref{thm:sc7manyH} and Corollary \ref{cor:Cor2}}\label{sec:manyone}
Our investigation for the case $t=4$ begins by packaging the number of self-conjugate $7$-cores into a generating function and using the fact that it is a modular form to relate $\operatorname{sc}_7(n)$ to class numbers. We thus define
\[
S(\tau):=\sum_{n\geq 0} \operatorname{sc}_7(n) q^{n+2}.
\]
As stated on \cite[page 4]{OnoRaji}, $S$ is a modular form of weight $\frac{3}{2}$ on $\Gamma_0(28)$ with character $(\frac{28}{\cdot})$.
\subsection{Proof of Theorem \ref{thm:sc7manyH}}
To prove Theorem \ref{thm:sc7manyH}, we let
\[
\mathcal{H}_{\ell_1,\ell_2}(\tau):=\mathcal{H}\big|(U_{\ell_1,\ell_2}-\ell_2U_{\ell_1}V_{\ell_2})(\tau).
\]
Here for $f(\tau):=\sum_{n\in\Z} c_f(n)q^n$
\[
f\big|U_d(\tau):=\sum_{n\in\Z} c_f(dn)q^n,\qquad f\big|V_d(\tau):=\sum_{n\in\Z} c_f(n)q^{dn},
\]
and
\begin{equation*}
\mathcal{H}(\tau):=\sum_{\substack{D\geq 0\\ D\equiv 0,3\pmod 4}}H(D)q^D.
\end{equation*}
\begin{proof}[Proof of Theorem \ref{thm:sc7manyH}]
	Shifting $n\mapsto n-2$ in Theorem \ref{thm:sc7manyH} and
	taking the generating function of both sides, the claim of the theorem is equivalent to
	\begin{equation}\label{eqn:toprove}
	S=\frac{1}{4}\mathcal{H}_{1,2}\big|\left(U_{14}-U_2\big|V_7\right).
	\end{equation}

	By \cite[Lemma 2.3 and Lemma 2.6]{BKPeterssonQF}, both sides of \eqref{eqn:toprove} are modular forms of weight $\frac{3}{2}$ on $\Gamma_0(56)$ with character $
	(\frac{28}{\cdot})
	$. By the valence formula, it thus suffices to check \eqref{eqn:toprove} for the first $12$ coefficients; this has been done by computer, yielding \eqref{eqn:toprove} and hence Theorem \ref{thm:sc7manyH}. 
\end{proof}

\subsection{Rewriting $\operatorname{sc}_7(n)$ in terms of representation numbers}
The next lemma rewrites $\operatorname{sc}_7(n)$ in terms of the representation numbers ($m\in \N_0$)
\[
r_{3}(m):=\#\left\{ \bm{x}\in \Z^3: x_1^2+x_2^2+x_3^2=m\right\}.
\]
For $m\in\Q\setminus\N_0$, we furthermore set $r_3(m):=0$ for ease of notation.
\begin{lemma}\label{lem:sc7Theta^3}
	\hfill
	\begin{enumerate}[leftmargin=20pt]
		\item[\rm (1)] For $n\in \N$, we have
		\begin{equation*}
		\operatorname{sc}_7(n)=\frac{1}{48}\left(r_3(7n+14)-r_3\left(\frac{n+2}{7}\right)\right). 
		\end{equation*}
		
		\item[\rm (2)] 	If $n\equiv 
		-2
		\pmod{7}$, then we have 
		\begin{equation*}
		\operatorname{sc}_7(n)=\frac{1}{48}\left(\left(7+\left(\frac{\frac{D_n}{7^2}}{7}\right)\right)r_3\left(\frac{n+2}{7}\right)-7r_3\left(\frac{n+2}{7^3}\right)\right). 
		\end{equation*}
	\end{enumerate}
\end{lemma}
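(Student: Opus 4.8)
The plan is to feed Theorem~\ref{thm:sc7manyH} into the classical formula of Gauss expressing $r_3$ through Hurwitz class numbers and, for part~(2), to apply the Hecke relation for sums of three squares at the prime $7$.

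For part~(1) I would start from the identity
\[
r_3(N) = 12\,H(4N) - 24\,H(N),
\]
valid for all $N\in\N_0$ (with the conventions $H(N)=0$ when $-N$ is not a discriminant and $r_3(N)=0$ otherwise already in force). Writing $28n+56 = 4(7n+14)$ and $\frac{4n+8}{7} = 4\cdot\frac{n+2}{7}$, this yields
\[
r_3(7n+14) = 12\,H(28n+56) - 24\,H(7n+14), \qquad r_3\!\left(\tfrac{n+2}{7}\right) = 12\,H\!\left(\tfrac{4n+8}{7}\right) - 24\,H\!\left(\tfrac{n+2}{7}\right).
\]
Subtracting these and dividing by $48$ reproduces exactly $\tfrac14$ times the bracket in Theorem~\ref{thm:sc7manyH}, which proves~(1). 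The only new input is the Gauss formula, which is classical.

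For part~(2) I would use that $n\equiv -2\pmod 7$ forces $7\mid n+2$, so that, setting $m:=\frac{n+2}{7}$, one has $7n+14 = 7^2 m$. The key input is the standard Hecke relation for $r_3$ at the prime $7$,
\[
r_3(7^2 m) = \left(7 + 1 - \left(\tfrac{-m}{7}\right)\right) r_3(m) - 7\, r_3\!\left(\tfrac{m}{7^2}\right),
\]
with the convention that $r_3(m/49)=0$ unless $49\mid m$. Substituting this into part~(1) and using $\left(\tfrac{-1}{7}\right)=-1$, so that $-\left(\tfrac{-m}{7}\right)=\left(\tfrac{m}{7}\right)$, collapses the coefficient of $r_3(m)$ to $7 + \left(\tfrac{m}{7}\right)$ and gives
\[
\operatorname{sc}_7(n) = \frac{1}{48}\left(\left(7 + \left(\tfrac{m}{7}\right)\right) r_3\!\left(\tfrac{n+2}{7}\right) - 7\,r_3\!\left(\tfrac{n+2}{7^3}\right)\right).
\]
It then remains to check $\left(\tfrac{m}{7}\right) = \left(\tfrac{D_n/7^2}{7}\right)$ via the cases of~\eqref{eqn:Dndef}: for $n\equiv 0,1\pmod 4$ one has $D_n/7^2 = 4m$, for $n\equiv 3\pmod 4$ one has $D_n/7^2 = m$, and since $4$ is a square modulo $7$ both give $\left(\tfrac{D_n/7^2}{7}\right)=\left(\tfrac{m}{7}\right)$; the case $n\equiv 2\pmod 4$ reduces to one of these through the recursion in~\eqref{eqn:Dndef}, as passing from $n$ to $\frac{n+2}{2^{2\ell}}-2$ alters $m$ only by the square factor $2^{2\ell}$, invisible to $\left(\tfrac{\cdot}{7}\right)$.

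I expect the main obstacle to be purely bookkeeping: pinning down the precise three-squares Hecke relation (the sign of the $\left(\tfrac{-m}{7}\right)$ term and the $-7\,r_3(m/49)$ correction term) and then threading the Legendre symbol cleanly through the piecewise, recursively defined $D_n$. In particular one must verify that the recursion for $n\equiv 2\pmod 4$ terminates after a single step, which it does because $\ell$ is chosen maximally, forcing $\frac{n+2}{2^{2\ell}}$ to be either odd or twice an odd number.
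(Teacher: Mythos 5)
Your proposal is correct and takes essentially the same route as the paper: part (1) combines Theorem \ref{thm:sc7manyH} with the uniform identity $r_3(N)=12H(4N)-24H(N)$, which is precisely the coefficient-level form of the identity $\Theta^3=12\,\mathcal{H}_{1,2}\big|U_2$ that the paper cites from \cite{BKPeterssonQF}, and part (2) substitutes the Hecke relation $\Theta^3\big|T_{7^2}=8\,\Theta^3$ into part (1), exactly as the paper does. The only differences are cosmetic: you work at the level of Fourier coefficients rather than generating functions, and you spell out the Legendre-symbol bookkeeping $\left(\frac{(n+2)/7}{7}\right)=\left(\frac{D_n/7^2}{7}\right)$ across the cases of \eqref{eqn:Dndef}, which the paper leaves implicit in ``the claim follows by (1).''
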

\begin{proof}
	(1)\;
	 By the proof of \cite[Lemma 4.1]{BKPeterssonQF} we have 
	\[
	\Theta^3(\tau)=\sum_{n\geqslant0} r_3(n) q^n=12 \mathcal{H}_{1,2}\big|U_2(\tau),
	\]
	where  $\Theta(\tau):=\sum_{n\in\Z} q^{n^2}$ is the usual theta function.
	Plugging this into \eqref{eqn:toprove}, the claim follows after picking off the Fourier coefficients and shifting $n\mapsto n+2$.
	\\
	(2)\;
	Recall that 
	for $f(\tau) =\sum_{n\in\Z} c_f(n) q^n$ a modular form of weight $\lambda +\frac{1}{2}\in\Z+\frac 12$, the \emph{$p^2$-th Hecke operator} is defined as
	\begin{equation*}
	f| T_{p^2}(\tau) = \sum_{n\geqslant 0}\left(c_f\left(p^2n\right) +\left(\frac{(-1)^{\lambda}n}{p}\right)p^{\lambda  -1}c_f(n)+p^{2\lambda-1}c_f\left(\frac{n}{p^2}\right)\right)q^n.
	\end{equation*}	
	It is well-known that
	\begin{equation}\label{Hecke1}
	\Theta^3|T_{p^2}=(p+1)\Theta^3.
	\end{equation}
	Rearranging \eqref{Hecke1} and comparing coefficients we obtain, by \eqref{Hecke1},	for $m:=n+2\equiv 0\pmod{7}$,
	\[
	r_3(7m)= 8r_3\left(\frac{m}{7}\right)-\left(\frac{-\frac{m}{7}}{7}\right) r_3\left(\frac{m}{7}\right) - 7r_3\left(\frac{m}{7^3}\right). 
	\]
	The claim follows by (1).
\end{proof}

\subsection{Formulas in terms of single class numbers}
We next turn to formulas for $\operatorname{sc}_7(n)$ in terms of a single class number. 
\begin{corollary}\label{cor:sc7oneH}
	\hfill
	\begin{enumerate}[leftmargin=*,label=\rm(\arabic*)]
		\item For $n\not\equiv -2\pmod{7}$
		and $n\not\equiv 2\pmod{4}$, we have 
		\begin{equation*}
		\operatorname{sc}_7(n)=\nu_{n}H(D_n).
		\end{equation*}
		
		\item For $n\equiv -2\pmod{7}$, $n\not\equiv -2\pmod{7^3}$, and $n\not\equiv 2\pmod{4}$, we have 
		\begin{equation*}
		\operatorname{sc}_7(n)=\left(7+\left(\frac{\frac{D_n}{7^2}}{7}\right)\right)\nu_{n}H\left(\frac{D_n}{7^2}\right).
		\end{equation*}
		\item
		If $n\equiv 2\pmod{4}$, then
		\[
		\operatorname{sc}_7(n)=\operatorname{sc}_7\left(\frac{n+2}{4}-2\right).
		\]
		\item
		If $n\equiv -2\pmod{7^2}$, then 
		\[
		\operatorname{sc}_7(n)=7\operatorname{sc}_7\left(\frac{n+2}{7^2}-2\right).
		\]
	\end{enumerate}
	
\end{corollary}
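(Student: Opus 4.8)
The plan is to derive all four parts directly from the representation-number identities of Lemma~\ref{lem:sc7Theta^3}, combined with the classical Gauss formula for sums of three squares, which for $N\in\N$ reads
\[
r_3(N)=\begin{cases}
12\,H(4N)&\text{if }N\equiv 1,2\pmod 4,\\
24\,H(N)&\text{if }N\equiv 3\pmod 8,\\
0&\text{if }N\equiv 7\pmod 8,\\
r_3\!\left(\tfrac N4\right)&\text{if }N\equiv 0\pmod 4.
\end{cases}
\]
Writing $m:=n+2$ throughout, the strategy in each part is to remove one of the two $r_3$-terms using the hypotheses, reduce the surviving argument modulo $8$, and then read off $D_n$ and $\nu_n$ from \eqref{eqn:Dndef}--\eqref{eqn:nudef}.

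For part (1), the assumption $n\not\equiv -2\pmod 7$ gives $7\nmid m$, so $\frac m7\notin\N_0$ and $r_3(\frac m7)=0$; hence Lemma~\ref{lem:sc7Theta^3}(1) collapses to $\operatorname{sc}_7(n)=\frac1{48}r_3(7m)$. I would then split on $n\bmod 8$ (permitted since $n\not\equiv 2\pmod 4$): for $n\equiv 0,1\pmod 4$ one has $7m\equiv 2,1\pmod 4$, so the first line of the Gauss formula gives $r_3(7m)=12H(28m)=12H(D_n)$ with $\nu_n=\frac14$; for $n\equiv 3\pmod 8$ one has $7m\equiv 3\pmod 8$, so $r_3(7m)=24H(7m)=24H(D_n)$ with $\nu_n=\frac12$; and for $n\equiv 7\pmod 8$ one has $7m\equiv 7\pmod 8$, whence $r_3(7m)=0$, matching $\nu_n=0$. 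In every case the factor $\frac1{48}$ absorbs the $12$ or $24$ into exactly $\nu_n$. Part (2) runs identically from Lemma~\ref{lem:sc7Theta^3}(2): now $n\not\equiv -2\pmod{7^3}$ kills $r_3(\frac{m}{7^3})$, and the Gauss formula is applied to $r_3(\frac m7)$. Reducing $\frac m7$ modulo $8$ in each residue class, one checks that the surviving class number is $H(4\cdot\frac m7)=H(\frac{D_n}{7^2})$ when $n\equiv 0,1\pmod 4$ and $H(\frac m7)=H(\frac{D_n}{7^2})$ when $n\equiv 3\pmod 8$ (using $\frac{D_n}{7^2}=\frac{4m}{7}$ resp.\ $\frac m7$), while the prefactor $7+\left(\frac{D_n/7^2}{7}\right)$ is inherited verbatim from Lemma~\ref{lem:sc7Theta^3}(2).

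Parts (3) and (4) are recursions on the arguments of $r_3$ and require no class-number input. For (3), writing $m=4(n'+2)$ with $n'=\frac m4-2$, the relation $r_3(4k)=r_3(k)$ applied to both terms of Lemma~\ref{lem:sc7Theta^3}(1) sends $r_3(7m)\mapsto r_3(7(n'+2))$ and $r_3(\frac m7)\mapsto r_3(\frac{n'+2}{7})$ (the cases $7\mid m$ and $7\nmid m$ agreeing since $7\mid m\iff 7\mid n'+2$), which is precisely $48\operatorname{sc}_7(n')$. For (4), writing $m=49(n'+2)$ with $n'=\frac{m}{49}-2$, I would substitute Lemma~\ref{lem:sc7Theta^3}(1) into the Hecke eigenvalue relation \eqref{Hecke1} already exploited in the proof of Lemma~\ref{lem:sc7Theta^3}(2), namely $r_3(49k)=\left(8-\left(\frac{-k}{7}\right)\right)r_3(k)-7r_3(\frac{k}{49})$. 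Taking $k=7(n'+2)$ makes $\left(\frac{-k}{7}\right)=0$, so $r_3(7m)=r_3(49\cdot 7(n'+2))=8r_3(7(n'+2))-7r_3(\frac{n'+2}{7})$; since also $r_3(\frac m7)=r_3(7(n'+2))$, subtracting yields $48\operatorname{sc}_7(n)=7\bigl(r_3(7(n'+2))-r_3(\frac{n'+2}{7})\bigr)=7\cdot 48\operatorname{sc}_7(n')$.

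The work is essentially bookkeeping, and the main obstacle is to carry out the residue reductions modulo $8$ of $7m$ and of $\frac m7$ so that they align exactly with the three-way split defining $D_n$ and $\nu_n$; the one point requiring genuine care is confirming in part (2) that the quantity inside the surviving Hurwitz class number is literally $\frac{D_n}{7^2}$, after which the Legendre prefactor needs no manipulation since it already occurs in Lemma~\ref{lem:sc7Theta^3}(2). No analytic input beyond that Lemma and the classical three-squares formula is needed.
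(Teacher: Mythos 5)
Your proposal is correct and takes essentially the same route as the paper: parts (1) and (2) drop the vanishing $r_3$-term in Lemma \ref{lem:sc7Theta^3} under the stated congruence hypotheses and then apply Gauss's formula \eqref{eqn:r3Gauss}, while part (3) is exactly the paper's use of $r_3(4k)=r_3(k)$ together with Lemma \ref{lem:sc7Theta^3} (1). The only cosmetic difference is in part (4), where the paper cites Lemma \ref{lem:sc7Theta^3} (2) directly (noting the Legendre symbol vanishes because $7^3\mid D_n$), whereas you unpack that lemma into the underlying Hecke relation \eqref{Hecke1} with $k=7(n'+2)$ --- the same computation in slightly expanded form.
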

\begin{remark*}
	For $n\not\equiv 2\pmod{4}$, we have $7(n+2)\mid D_n$, so $n\equiv -2\pmod 7$ implies that $7^2\mid D_n$, and hence Corollary \ref{cor:sc7oneH} (2) is meaningful.
\end{remark*}

\begin{proof}[Proof of Corollary \ref{cor:sc7oneH}]
	(1)\; Since $n\not\equiv -2\pmod{7}$, the final term in Lemma \ref{lem:sc7Theta^3} (1) vanishes, giving 
	\begin{equation*}
	\operatorname{sc}_7(n)= \frac{1}{48}r_3(7n+14).
	\end{equation*}
	The claim then follows immediately by plugging in
	the well-known formula of Gauss (see e.g. \cite[Theorem 8.5]{OnoBook})
	\begin{equation}\label{eqn:r3Gauss}
	r_3(n)=\begin{cases} 12 H(4n)&\text{if }n\equiv 1,2\pmod{4},\\
	24H(n)&\text{if }n\equiv 3\pmod{8},\\
	r_3\left(\frac{n}{4}\right)&\text{if }4\mid n,\\
	0&\text{otherwise}.
	\end{cases}
	\end{equation}
	\\
	\noindent
	(2)\; Since	$7^3\nmid (n+2)$, the final term in Lemma \ref{lem:sc7Theta^3} (2) vanishes, giving 
	\[
	\operatorname{sc}_7(n)= 
	\frac{1}{48}\left(7+\left(\frac{\frac{D_n}{7^2}}{7}\right)\right) r_3\left(\frac{n+2}{7}\right).
	\]
	\rm
	The claim then immediately follows by plugging in \eqref{eqn:r3Gauss}. 
	\\
	\noindent
	(3)\; 
	Since $n\equiv 2\pmod{4}$, we have $4\mid (n+2)$, and hence \eqref{eqn:r3Gauss} and Lemma \ref{lem:sc7Theta^3} (1) imply the claim.
	
	\noindent
	(4)\;
	Since $n\equiv -2\pmod{7^2}$, $7^3\mid D_n$,
	so  $7\mid \frac{D_n}{7^2}$.
	Hence Lemma \ref{lem:sc7Theta^3} (1), (2) imply the claim.
\end{proof}

\subsection{Proof of Corollary \ref{cor:Cor2}}
We next consider the special case that $n+2$ is squarefree and use Dirichlet's class number formula to obtain another formula for $\operatorname{sc}_7(n)$.
\begin{proof}[Proof of Corollary \ref{cor:Cor2}]
	Note that since $n+2$ is squarefree, either $-D_n$ is fundamental (for $n\not\equiv -2\pmod{7}$) or $-\frac{D_n}{7^2}$ is fundamental (for $n\equiv -2\pmod{7}$).
	Dirichlet's class number formula (see e.g. \cite[Satz 3]{ZagierZeta}) states that
	\begin{equation}\label{eqn:ClassNumberFormulaCorrect}
	H(|D|)= -\frac{1}{|D|} \sum_{m=1}^{|D|-1} \left(\frac{D}{m}\right)m.
	\end{equation}
	By Corollary \ref{cor:sc7oneH} (1),  (2) (the conditions given there are satisfied because $n+2$ is squarefree and thus neither $n\equiv 2 \pmod 4$ nor $n\equiv-2\tmod{7^3}$), we have 
	\begin{equation}\label{SH}\operatorname{sc}_7(n)=\nu_{n}
	\begin{cases}
	H(D_n)  & \text{if } n\not\equiv -2\pmod{7},\\
	\left(7+\left(\frac{\frac{D_n}{7^2}}{7}\right)\right)	H\left(\frac{D_n}{7^2}\right)  & \text{if } n\equiv 
	-2
	\pmod{7}.
	\end{cases}
	\end{equation}
	Since $-D_n$ is fundamental in the first case and $-\frac{D_n}{7^2}$ is fundamental in the second case, we may plug in \eqref{eqn:ClassNumberFormulaCorrect} with $D= -D_n$ in the first case and $D=-\frac{D_n}{7^2}$ in the second case.
	
	Thus for $n\not\equiv -2 \pmod{7}$ we plug 
	\[
	H\left(D_n\right)=-\frac{1}{D_n} \sum_{m=1}^{D_n-1} \left(\frac{-D_n}{m}\right)m
	\]
	into \eqref{SH}, while for $n\equiv -2 \pmod{7}$ we plug in
	\[
	H\left(\frac{D_n}{7^2}\right)=-\frac{7^2}{D_n} \sum_{m=1}^{\frac{D_n}{7^2}-1}\left(\frac{-\frac{D_n}{7^2}}{m}\right)m.
	\]
	This yields the claim. 
\end{proof}

\section{Proofs of Corollaries \ref{cor:counting} and \ref{cor:Cor3}}\label{sec:squarepartcounting}
This section relates $\operatorname{sc}_7(m)$ and $\operatorname{sc}_7(n)$ if $\frac{m+2}{n+2}$ is a square.
\subsection{A recursion for $\operatorname{sc}_7(n)$}
In this subsection, we consider the case 	$\frac{m+2}{n+2}=2^{2j}7^{2\ell}$. 
\begin{lemma}\label{lem:div4}
	Let $\ell\in\N_0$ and $n\in\N$.
	\begin{enumerate}[leftmargin=20pt]
		\item[\rm (1)] We have 
		\begin{equation*}
		\operatorname{sc}_7\left((n+2)2^{2\ell}-2\right)=\operatorname{sc}_7(n).
		\end{equation*}
		
		\item[\rm (2)] We have 
		\[
		\operatorname{sc}_7\left((n+2)7^{2\ell}-2\right)=7^\ell\operatorname{sc}_7(n).
		\]
	\end{enumerate}
\end{lemma}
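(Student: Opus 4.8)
The plan is to prove both identities by induction on $\ell$, observing that the single-step cases are already available from Corollary \ref{cor:sc7oneH}. Indeed, parts (3) and (4) of that corollary are, after reindexing, precisely the $\ell=1$ instances of (1) and (2): for part (1) applying Corollary \ref{cor:sc7oneH}(3) to $M:=(n+2)2^2-2\equiv 2\pmod 4$ gives $\operatorname{sc}_7(M)=\operatorname{sc}_7(\tfrac{M+2}{4}-2)=\operatorname{sc}_7(n)$, and similarly Corollary \ref{cor:sc7oneH}(4) supplies the $\ell=1$ case of (2). So the whole task reduces to iterating these two recursions.

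For part (1), I would induct on $\ell$, the base case $\ell=0$ being the tautology $(n+2)2^0-2=n$. For the step, set $m:=(n+2)2^{2(\ell+1)}-2$, so that $m+2=4\cdot(n+2)2^{2\ell}$ is divisible by $4$ and hence $m\equiv 2\pmod 4$. Corollary \ref{cor:sc7oneH}(3) then yields $\operatorname{sc}_7(m)=\operatorname{sc}_7\left(\frac{m+2}{4}-2\right)=\operatorname{sc}_7\left((n+2)2^{2\ell}-2\right)$, which equals $\operatorname{sc}_7(n)$ by the induction hypothesis.

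For part (2), the structure is identical while tracking the factor $7^\ell$. The base case $\ell=0$ is again trivial. For the step, put $m:=(n+2)7^{2(\ell+1)}-2$, so $m+2=49\cdot(n+2)7^{2\ell}$ is divisible by $7^2$, giving $m\equiv -2\pmod{7^2}$. Corollary \ref{cor:sc7oneH}(4) gives $\operatorname{sc}_7(m)=7\operatorname{sc}_7\left(\frac{m+2}{49}-2\right)=7\operatorname{sc}_7\left((n+2)7^{2\ell}-2\right)$, and the induction hypothesis turns the right-hand side into $7\cdot 7^\ell\operatorname{sc}_7(n)=7^{\ell+1}\operatorname{sc}_7(n)$.

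There is no genuine obstacle here; all the content sits in the single-step recursions of Corollary \ref{cor:sc7oneH}, which themselves rest on Lemma \ref{lem:sc7Theta^3} and Gauss's formula \eqref{eqn:r3Gauss}. If a reader preferred a derivation bypassing the corollary, part (1) can be read off directly from Lemma \ref{lem:sc7Theta^3}(1): writing $N:=(n+2)2^{2\ell}-2$, one has $7N+14=(7n+14)4^\ell$ and $\frac{N+2}{7}=\frac{n+2}{7}4^\ell$, so the $4\mid n$ case of \eqref{eqn:r3Gauss}, namely $r_3(4m)=r_3(m)$, applied $\ell$ times to each term (with the convention $r_3(x)=0$ for $x\notin\N_0$ absorbing non-integral arguments) collapses $\operatorname{sc}_7(N)$ back to $\operatorname{sc}_7(n)$. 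The factor of $7$ in part (2) makes this direct route less transparent, since $r_3$ transforms under multiplication by $7^2$ via the Hecke relation \eqref{Hecke1} rather than by a clean equality, so I would keep the inductive argument through Corollary \ref{cor:sc7oneH}(4) there.
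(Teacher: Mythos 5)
Your proof is correct and follows essentially the same route as the paper: both arguments iterate the single-step recursions of Corollary \ref{cor:sc7oneH}(3) and (4) by induction on $\ell$, with the congruence checks $m\equiv 2\pmod 4$ and $m\equiv -2\pmod{7^2}$ verified exactly as needed. The alternative direct derivation of part (1) via $r_3(4m)=r_3(m)$ is a nice observation but does not change the substance.
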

\begin{proof}
	(1)\; 
	Corollary \ref{cor:sc7oneH} (3) gives inductively that for $0\leq j\leq \ell$ we have 
	\[
	\operatorname{sc}_7\left((n+2)2^{2\ell}-2\right) =\operatorname{sc}_7\left((n+2)2^{2(\ell-j)}-2\right).
	\]
	In particular, $j=\ell$ yields the claim.
	\\
	(2)\;
	The claim is trivial if $\ell=0$. For $\ell\geq 1$, Corollary \ref{cor:sc7oneH} (4) inductively yields that for $0\leq j\leq \ell$
	\[
	\operatorname{sc}_7\left((n+2)7^{2\ell}-2\right)=7^j\operatorname{sc}_7\left((n+2)7^{2(\ell-j)}-2\right).
	\]
	The case $j=\ell$ is precisely the claim.\qedhere
\end{proof}

\subsection{Proof of Corollary \ref{cor:Cor3}}
We are now ready to prove Corollary \ref{cor:Cor3}. 
\begin{proof}[Proof of Corollary \ref{cor:Cor3}]
	We first use Lemma \ref{lem:div4} (1), (2) to obtain that
	\begin{equation}\label{eqn:2and7gone}
	\operatorname{sc}_7\left((n+2)
	2^{2\ell}
	f^27^{2r} -2\right)
	=7^r\operatorname{sc}_7\left((n+2)f^2-2\right).
	\end{equation}

	We split into the case $n\not\equiv 
	-2
	\pmod{7}$ (in which case $-D_n$ is fundamental) and $n\equiv 
	-2
	\pmod{7}$ (in which case $-\frac{D_n}{7^2}$ is fundamental).

	First suppose that $n\not\equiv 
	-2
	\pmod{7}$. We use Corollary \ref{cor:sc7oneH} (1) to obtain 
	\begin{equation*}
	\operatorname{sc}_7\left((n+2)f^2-2\right)=\nu_{n}H\left(D_nf^2\right)
	\end{equation*}
	We then plug in \cite[p. 273]{Cohen} ($-D$ a fundamental discriminant) 
	\begin{equation}\label{eqn:Df^2}
	H\left(Df^2\right)=H(D)\sum_{1\leq d|f} \mu(d)\left(\frac{-D}{d}\right)\sigma\left(\frac{f}{d}\right).
	\end{equation}
	Hence by Corollary \ref{cor:sc7oneH} (1)
	\[
	\operatorname{sc}_7\left((n+2)f^2-2\right)=\operatorname{sc}_7(n)\sum_{1\leq d|f} \mu(d)\left(\frac{-D_n}{d}\right)\sigma\left(\frac{f}{d}\right),
	\]
	and plugging back into \eqref{eqn:2and7gone} yields the corollary in that case.

	We next suppose that $n\equiv -2\pmod 7$. 
	First note that since $7\nmid f$ and $n+2$ is squarefree, $(n+2)f^2-2\not\equiv -2\pmod{7^3}$ 
	and $n\not\equiv 2\pmod{4}$.
	We plug in Corollary \ref{cor:sc7oneH} (2), use \eqref{eqn:Df^2} 
	(recall that $-\frac{D_n}{7^2}$ is fundamental),
	and note that $\left(\frac{\frac{D_n f^2}{7^2}}{7}\right) = \left(\frac{\frac{D_n}{7^2}}{7}\right)$ to obtain that
	\[
	\operatorname{sc}_7\left((n+2)f^2-2\right)= \left(7+\left(\frac{\frac{D_n}{7^2}}{7}\right)\right)\nu_{n}H\left(\frac{D_n}{7^2}\right)\sum_{1\leq d|f} \mu(d)\left(\frac{-\frac{D_n}{7^2}}{d}\right)\sigma\left(\frac{f}{d}\right).
	\]
	We then use Corollary \ref{cor:sc7oneH} (2) again and plug back into \eqref{eqn:2and7gone} to conclude that
	\[
	\operatorname{sc}_7\left((n+2)
	2^{2\ell}f^27^{2r}
	-2\right)=7^r\operatorname{sc}_7(n)\sum_{1\leq d|f} \mu(d)\left(\frac{-\frac{D_n}{7^2}}{d}\right)\sigma\left(\frac{f}{d}\right).
	\]
	Since $7\nmid f$, we have $\left(\frac{-\frac{D_n}{7^2}}{d}\right)=\left(\frac{-D_n}{d}\right)$ for $d\mid f$. Therefore the corollary follows.
\end{proof}

\subsection{Proof of Corollary \ref{cor:counting}}
We next rewrite Corollary \ref{cor:sc7oneH} (2) in order to uniformly package Corollary \ref{cor:sc7oneH} (1), (2), and (3).
We first require a lemma relating the $7$-primitive class numbers $H_7$ and the Hurwitz class numbers.
\begin{lemma}\label{lem:H7diff}
	For a discriminant $-D$, we have 
	\[
	H_7(D)=H(D)-H\left(\frac{D}{7^2}\right).
	\]
\end{lemma}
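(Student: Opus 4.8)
The plan is to prove the identity by exhibiting an explicit weight-preserving bijection between the \emph{non}-$7$-primitive forms of discriminant $-D$ and \emph{all} forms of discriminant $-D/49$, and then reading off the lemma as a disjoint-union count. First I would partition the integral binary quadratic forms of discriminant $-D$ into the $7$-primitive ones, whose weighted $\SL_2(\Z)$-classes are counted by $H_7(D)$ by definition, and the non-$7$-primitive ones, i.e.\ those $[a,b,c]$ with $7\mid\gcd(a,b,c)$. For such a form one has $-D=b^2-4ac\equiv 0\pmod{49}$, so non-$7$-primitive forms occur only when $49\mid D$; moreover, since $49\equiv 1\pmod 4$, in that case $-D/49$ is again a negative discriminant, which is exactly the situation in which the convention $H(D/49)=0$ does \emph{not} force the right-hand side to collapse.

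Next I would analyze the map $[a,b,c]=[7a',7b',7c']\mapsto[a',b',c']$ defined on non-$7$-primitive forms. A direct computation gives $b'^2-4a'c'=(b^2-4ac)/49=-D/49$, and this map is a two-sided inverse to multiplication of all three coefficients by $7$; hence it is a bijection from the non-$7$-primitive forms of discriminant $-D$ onto all forms of discriminant $-D/49$. The key structural point is that scaling a form by the constant $7$ commutes with the $\SL_2(\Z)$-action, which is linear in the variables: if $Q(x,y)=ax^2+bxy+cy^2$ and $M\in\SL_2(\Z)$, then $(7Q)\circ M=7(Q\circ M)$. Consequently the bijection descends to $\SL_2(\Z)$-equivalence classes, and a matrix $M$ stabilizes $[7a',7b',7c']$ if and only if it stabilizes $[a',b',c']$, so the two forms share the same automorphism group.

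Combining these observations, the Hurwitz weight attached to each class depends only on the order of its automorphism group, and that order is preserved under the bijection; therefore the weighted count of non-$7$-primitive classes of discriminant $-D$ equals exactly $H(D/49)$, with both sides understood to vanish when $49\nmid D$. Since every class of discriminant $-D$ is either $7$-primitive or not, the Hurwitz class number splits as $H(D)=H_7(D)+H(D/49)$, and rearranging yields $H_7(D)=H(D)-H(D/49)$, which is the claim.

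I do not expect a genuine obstacle here, as the argument is structurally a disjoint-union count; the one place requiring care is the bookkeeping of the automorphism-group weighting at the exceptional discriminants $-3$ and $-4$ (and their multiples by $49$), where the Hurwitz weight differs from $1$. Because the division-by-$7$ bijection preserves automorphism groups class-by-class, these exceptional weights transfer verbatim, so the step is immediate; it is, however, the only point at which the precise ``weighted by the order of the automorphism group'' definition of $H$ and $H_7$ is actually used, so it is worth stating explicitly rather than leaving implicit.
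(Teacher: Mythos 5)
Your proof is correct and is essentially the paper's argument made explicit: the paper disposes of this lemma in one line ("a simple manipulation using the inclusion--exclusion principle"), and the content behind that phrase is precisely your decomposition of classes of discriminant $-D$ into $7$-primitive ones and those with $7\mid\gcd(a,b,c)$, with the scale-by-$7$ bijection identifying the latter (weights and automorphism groups included) with all classes of discriminant $-D/49$. Your write-up, including the check that $-D/49$ is again a discriminant when $49\mid D$ and that the weighting transfers at the exceptional discriminants, is a sound and complete expansion of the paper's terse proof.
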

\begin{proof}
A simple manipulation using the inclusion-exclusion principle immediately yields the claimed formula.
	\end{proof}
	
	To finish the proof of Corollary \ref{cor:counting}, for a fundamental discriminant $-\Delta$, we also require the evaluation of
	\[
	C_{r,\Delta}:=\sum_{d\mid 7^r} \mu(d)\left(\frac{-\Delta}{d}\right)\sigma\left(\frac{7^r}{d}\right)-\sum_{d\mid 7^{r-1}} \mu(d)\left(\frac{-\Delta}{d}\right)\sigma\left(\frac{7^{r-1}}{d}\right).
	\]
	A straightforward calculation gives 
	the following lemma.
	\begin{lemma}\label{lem:Creval}
		For $r\in\N$ we have 
		\[
		C_{r,\Delta}=7^{r-1}\left(7+\left(\frac{\Delta}{7}\right)\right).
		\]
	\end{lemma}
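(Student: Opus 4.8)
The plan is to exploit the fact that the M\"obius function kills all non-squarefree divisors, so that in both sums defining $C_{r,\Delta}$ only the divisors $d=1$ and $d=7$ can survive. The only arithmetic input needed beyond the definitions is the standard evaluation $\sigma(7^k)=\frac{7^{k+1}-1}{6}$ of the divisor sum of a prime power, which I would record first.

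Next I would write each of the two sums explicitly. For the sum over $d\mid 7^r$ (with $r\geq 1$), the surviving terms are $d=1$, contributing $\sigma(7^r)$, and $d=7$, contributing $-\left(\frac{-\Delta}{7}\right)\sigma(7^{r-1})$. Similarly the sum over $d\mid 7^{r-1}$ contributes $\sigma(7^{r-1})$ from $d=1$ and, when $r\geq 2$, the term $-\left(\frac{-\Delta}{7}\right)\sigma(7^{r-2})$ from $d=7$; for $r=1$ the second sum is just over $d\mid 7^0=1$ and reduces to the single term $\sigma(1)=1$, which I would treat as a trivial boundary case carrying no Jacobi-symbol contribution.

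Subtracting and grouping the two parts separately, the $\left(\frac{-\Delta}{7}\right)$-free part telescopes to $\sigma(7^r)-\sigma(7^{r-1})=\frac{7^{r+1}-7^r}{6}=7^r$, while the part carrying the Jacobi symbol collapses to $-\left(\frac{-\Delta}{7}\right)\bigl(\sigma(7^{r-1})-\sigma(7^{r-2})\bigr)=-\left(\frac{-\Delta}{7}\right)7^{r-1}$ (with the convention that the $r=1$ boundary case gives exactly $-\left(\frac{-\Delta}{7}\right)7^0$). Hence $C_{r,\Delta}=7^r-\left(\frac{-\Delta}{7}\right)7^{r-1}=7^{r-1}\bigl(7-\left(\frac{-\Delta}{7}\right)\bigr)$.

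The final step, and really the only place one must be careful, is matching this to the stated form. Since $\left(\frac{-1}{7}\right)=(-1)^{(7-1)/2}=-1$, one has $\left(\frac{-\Delta}{7}\right)=-\left(\frac{\Delta}{7}\right)$, and substituting gives $C_{r,\Delta}=7^{r-1}\bigl(7+\left(\frac{\Delta}{7}\right)\bigr)$, as claimed. I do not anticipate a genuine obstacle; the computation is routine, and the only pitfalls are remembering that only squarefree $d$ contribute, handling the $r=1$ boundary where $7^{r-2}$ does not appear, and keeping the sign of $\left(\frac{-1}{7}\right)$ straight when passing to the stated normalization.
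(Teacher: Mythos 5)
Your proof is correct: the M\"obius truncation to $d\in\{1,7\}$, the telescoping $\sigma(7^k)-\sigma(7^{k-1})=7^k$, the $r=1$ boundary case, and the sign flip via $\left(\frac{-1}{7}\right)=-1$ all check out. The paper gives no details here (it simply asserts the lemma follows by ``a straightforward calculation''), and your argument is exactly that calculation carried out in full.
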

	We are now ready to prove Corollary \ref{cor:counting}.
	\begin{proof}[Proof of Corollary \ref{cor:counting}]
		We first consider the case that $n\not\equiv 2\pmod{4}$. If $n\not \equiv -2\pmod{7}$, {then Corollary} \ref{cor:counting} follows directly from Corollary \ref{cor:sc7oneH} (1) and  Lemma \ref{lem:H7diff}.
		
		For $n\equiv -2\pmod{7}$, we choose $r_n\in\N_0$ maximally such that $n\equiv -2\pmod{7^{2r_n+1}}$ and proceed by induction on $r_n$. For $r_n=0$ we have $D_n=\Delta_n f^27^2$ with $-\Delta_n$ a fundamental discriminant and $7\nmid f$. Since $7\nmid f$, we have 
		\[
		\left(\frac{-\Delta_{n}f^2}{7}\right)=\left(\frac{-\Delta_{n}}{7}\right),
		\]
		and hence combining Corollary \ref{cor:sc7oneH} (2), \eqref{eqn:Df^2}, and Lemma \ref{lem:Creval} gives 
		\[
		\operatorname{sc}_7(n)=\nu_{n} H(\Delta_n)\left(\sum_{d\mid 7} \mu(d)\left(\frac{-\Delta_n}{d}\right)\sigma\left(\frac{7}{d}\right)-1\right)\sum_{d\mid f} \mu(d)\left(\frac{-\Delta_n}{d}\right)\sigma\left(\frac{f}{d}\right).
		\]
		Noting that $7\nmid f$ and
		\begin{equation}\label{eqn:multiplicative}
		\sum_{d\mid f} \mu(d)\left(\frac{-\Delta_n}{d}\right)\sigma\left(\frac{f}{d}\right)
		\end{equation}
		is multiplicative, we obtain 
		\[
		\operatorname{sc}_7(n)=\nu_{n} H(\Delta_n)\left(\sum_{d\mid 7f} \mu(d)\left(\frac{-\Delta_n}{d}\right)\sigma\left(\frac{7f}{d}\right)-\sum_{d\mid f} \mu(d)\left(\frac{-\Delta_n}{d}\right)\sigma\left(\frac{f}{d}\right)\right).
		\]
		We then apply \eqref{eqn:Df^2} again and use Lemma \ref{lem:H7diff} to obtain Corollary \ref{cor:counting} in this case. This completes the base case $r_n=0$ of the induction.
		
		Let $r\geq 1$ be given and assume the inductive hypothesis that that Corollary \ref{cor:counting} holds for all $n$ with $r_n<r$. We then let $n$ be arbitrary with $r_n=r$ and show that Corollary \ref{cor:counting} holds for $n$. 
		By Corollary \ref{cor:sc7oneH} (4), we have
		\begin{equation}\label{eqn:toinduct}
		\operatorname{sc}_7(n)=7\operatorname{sc}_7\left(\frac{n+2}{7^2}-2\right).
		\end{equation}
		By the maximality of $r_n$, $7^{2r-1}\mid \frac{n+2}{7^2}$ but $7^{2r+1}\nmid \frac{n+2}{7^2}$, so $r_{\frac{n+2}{7^2}-2}=r-1<r$ and hence by induction we may plug Corollary \ref{cor:counting} into the right-hand side of \eqref{eqn:toinduct} to obtain
		\begin{equation}\label{eqn:inductstep}
		\operatorname{sc}_7(n)= 7\nu_{\frac{n+2}{7^2}-2}H_7\left(D_{\frac{n+2}{7^2}-2}\right).
		\end{equation}
		A straightforward calculation shows that
		\[
		\nu_{\frac{n+2}{7^2}-2}=\nu_n\quad \text{ and }\quad D_{\frac{n+2}{7^2}-2}=\frac{D_n}{7^2}
		\]
		and hence
		\eqref{eqn:inductstep} implies that
		\[
		\operatorname{sc}_7(n)=7\nu_{n}H_7\left(\frac{D_n}{7^2}\right).
		\]
		Hence Corollary \ref{cor:counting} in this case is equivalent to showing that 
		\begin{equation}\label{eqn:remarktoshow}
		H_7\left(D_n\right)= 7H_7\left(\frac{D_{n}}{7^2}\right).
		\end{equation}
		Plugging Lemma \ref{lem:H7diff} and then \eqref{eqn:Df^2} into both sides of \eqref{eqn:remarktoshow}, cancelling $H(\Delta_n)$, and again using the multiplicativity of \eqref{eqn:multiplicative}, one obtains that \eqref{eqn:remarktoshow} is equivalent to $C_{r+1,\Delta_n}=7C_{r,\Delta_n}$. 
		Since $r\geq 1$, we have $r+1\geq 2$, and Lemma \ref{lem:Creval} implies that $C_{r+1,\Delta_n}=7C_{r,\Delta_n}$, yielding Corollary \ref{cor:counting} for all $n\not\equiv 2\pmod{4}$.

		We finally consider the case $n\equiv 2\pmod{4}$. We choose $\ell$ maximally such that $n\equiv -2\pmod{2^{2\ell}}$. Lemma \ref{lem:div4} (1) implies that
		\[
		\operatorname{sc}_7(n)=\operatorname{sc}_7\left(\left({\frac{n+2}{2^{2\ell}}-2} +2\right)2^{2\ell}-2\right)=\operatorname{sc}_7\left({\frac{n+2}{2^{2\ell}}-2}\right).
		\]
		The choice of $\ell$ implies that $\frac{n+2}{2^{2\ell}}-2\not\equiv 2\pmod{4}$. We may therefore plug in Corollary \ref{cor:counting} and the definitions \eqref{eqn:Dndef} and \eqref{eqn:nudef} to conclude that 
		\[
		\operatorname{sc}_7\left(\frac{n+2}{2^{2\ell}}-2\right)= \nu_{\frac{n+2}{2^{2\ell}}-2} H_7\left(D_{\frac{n+2}{2^{2\ell}}-2}\right)=\nu_{n}H_7\left(D_n\right).\qedhere
		\]
	\end{proof}

\section{A combinatorial explanation of Corollary \ref{cor:counting}}\label{Section: combinatorics}
Here we provide a combinatorial explanation for Corollary \ref{cor:counting}. We use the theory of abaci, following the construction in \cite{ono19974}.

\subsection{Abaci, extended $t$-residue diagrams, and self-conjugate $t$-cores}

Given a partition $\Lambda = (\lambda_1, \lambda_2, \dots, \lambda_s)$ with $\lambda_1 \geq \lambda_2 \geq \dots \geq \lambda_s >0$ of a positive integer $n$ 
and a positive integer $t$, we next describe the \begin{it}$t$-abacus\end{it} associated to $\Lambda$. This 
consists of $s$ beads on $t$ rods constructed in the following way \cite{ono19974}. For every $1 \leq j \leq s$ define \emph{structure numbers} by
\begin{equation*}
B_j \coloneqq \lambda_j - j+s.
\end{equation*}
For each $B_j$ there are unique integers $(r_j,c_j)$ such that
\begin{equation*}
B_j = t(r_j-1) +c_j,
\end{equation*}
and $0 \leq c_j < t-1$. The \emph{abacus} for the partition $\Lambda$ is then formed by placing one bead for each $B_j$ in row $r_j$ and column $c_j$. 
The {\it extended $t$-residue diagram} associated to a $t$-core partition $\Lambda$ is constructed as follows 
(see \cite[page 3]{garvan1990cranks}).
 Label a cell in the $j$-th row and $k$-th column of the Ferrers--Young diagram of $\Lambda$ by $k-j \pmod{t}$. We also label the cells in column $0$ in the same way. A cell is called {\it exposed} if it is at the end of a row. The {\it region $r$} of the extended $t$-residue diagram of $\Lambda$ is the set of cells $(j,k)$ satisfying $t(r-1) \leq k-j < tr$. Then we define $n_j$ to be the maximum region of $\Lambda$ which contains an exposed cell labeled $j$. As noted in \cite{garvan1990cranks}, this is well-defined since column $0$ contains infinitely many exposed cells.

\begin{example}
	Let $t=4$ and construct the abacus and $4$-residue diagram for the partition $\Lambda = (3,2,1)$. 
	We begin with the abacus, computing the structure numbers $B_1 = 5$, $B_2 = 3$, and $B_3 = 1$. Then diagrammatically the abacus is 
	\begin{equation*}
	\begin{matrix}
	{}\vphantom{\begin{smallmatrix}a\\ a\end{smallmatrix}} & \text{\large{$0$}} &  \text{\large{$1$}} & \text{\large{$2$}} &\text{\large{$3$}}  \\
\text{\large{$1$}} \vphantom{\begin{matrix}a\\ a\end{matrix}}& {} & \text{\large{$B_3$}} & {} & \text{\large{$B_2$}}   \\
	\text{\large{$2$}}\vphantom{\begin{matrix}a\\ a\end{matrix}} & {} & \text{\large{$B_1$}} &  & 	
	\end{matrix}
	\end{equation*}
	The extended $4$-residue diagram of the partition is
	\begin{equation*}
	\begin{matrix}
	{} & \text{\large{$0$}}\vphantom{\begin{smallmatrix}a\\ a\end{smallmatrix}} & \text{\large{$1$}} & \text{\large{$2$}} & \text{\large{$3$}} \\
	\text{\large{$1$}}\vphantom{\begin{matrix}a\\ a\end{matrix}} & {}_3 & \text{\large $\bullet$}_0 & \text{\large $\bullet$}_1 & \text{\large $\bullet$}_2  \\
	\text{\large{$2$}}\vphantom{\begin{matrix}a\\ a\end{matrix}}  & {}_2 & \text{\large $\bullet$}_3 & \text{\large $\bullet$}_0 & {} 	\\
	\text{\large{$3$}}\vphantom{\begin{matrix}a\\ a\end{matrix}} & {}_1 & \text{\large $\bullet$}_2 & {} & 	{} \\
	\end{matrix}
	\end{equation*}
	Then the exposed cells in this diagram are $(1,3)$, $(2,2)$, and $(3,1)$. One may then determine the region of these cells in the prescribed fashion. For example, the exposed cell $(1,3)$ labeled by $2$ belongs to the region $1$, and hence $n_2 = 1$.
\end{example}

Using this construction, \cite[Theorem 4]{ono19974} reads as follows. Note that this is implicitly proven in \cite[Theorem 2.7.16]{JK81}.

\begin{theorem}\label{thm:t-coreabacus}
	Let $A$ be an abacus for a partition $\Lambda$, and let $m_j$ denote the number of beads in column $j$. Then $\Lambda$ is a $t$-core partition if and only if the $m_j$ beads in column $j$ are the beads in positions
	$
	(1,j), (2,j), \dots, (m_j,j).
	$
\end{theorem}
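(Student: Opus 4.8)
The plan is to reduce the defining condition of a $t$-core---that no hook length be divisible by $t$---to a purely combinatorial statement about the structure numbers encoded by the abacus, and then to translate that statement into the rod-wise packing condition asserted in the theorem. The bridge between the two is the classical description of hook lengths in terms of beta-numbers, so the first task is to set that up cleanly.

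First I would record the identification of the structure numbers with the first-column hook lengths: since $\lambda_1' = s$, the cell $(j,1)$ has hook length $h(j,1) = \lambda_j + s - j = B_j$, and the values $B_1 > B_2 > \cdots > B_s \ge 1$ are distinct because $\lambda_j - j$ is strictly decreasing. Writing $\mathcal B := \{B_1,\dots,B_s\}$ for the set of beads and $\mathcal G := \N_0 \setminus \mathcal B$ for the complementary set of gaps, I claim that the multiset of hook lengths of $\Lambda$ is exactly
\[
\left\{\, b-g : b\in\mathcal B,\ g\in\mathcal G,\ g<b \,\right\}.
\]
This is the standard James--Kerber description; I would either cite \cite[Theorem 2.7.16]{JK81} or give the short bijective argument sending the cell $(j,k)$ to the pair $(B_j,\, B_j - h(j,k))$, checking that $B_j - h(j,k)$ always lands in $\mathcal G$ and that, for fixed $j$, the values $B_j - h(j,k)$ as $k$ ranges over row $j$ run through precisely the gaps less than $B_j$. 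This makes the assignment a genuine bijection of multisets.

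With this lemma in hand the theorem follows almost immediately from a congruence observation. By construction the abacus places position $p$ in column $p \bmod t$ and row $\lfloor p/t\rfloor + 1$, so two positions lie on the same rod exactly when they are congruent modulo $t$, and on a fixed rod a larger position occupies a higher-numbered (physically lower) row. A hook length $b-g$ is divisible by $t$ precisely when $b\equiv g \pmod t$, that is, when a bead $b$ and a gap $g<b$ sit on the same rod with the gap above the bead. Hence $\Lambda$ fails to be a $t$-core if and only if some rod carries a gap lying above some bead; equivalently, $\Lambda$ is a $t$-core if and only if on every rod the beads are pushed to the top with no interspersed gaps. In the coordinates of the theorem this says exactly that the $m_j$ beads in column $j$ occupy positions $(1,j),(2,j),\dots,(m_j,j)$ for each $j$, as claimed.

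I expect the only genuine work to be the hook-length lemma; the rod-wise translation is then a one-line argument. The subtle points to verify carefully in the lemma are that $B_j - h(j,k)$ never coincides with some other $B_{j'}$ (so it is really a gap) and that distinct cells yield distinct pairs, so that one obtains an honest bijection onto the full set of admissible $(b,g)$ pairs rather than a mere inclusion. Both follow from the monotonicity of the $B_j$ and the definition of the hook length, but they are exactly the spots where a careless argument would break down.
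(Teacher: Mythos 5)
Your proposal is correct, but note that the paper does not prove this statement at all: it is quoted as \cite[Theorem 4]{ono19974}, with the remark that it is implicitly proven in \cite[Theorem 2.7.16]{JK81}. So there is no in-paper argument to compare against; what you have written is essentially the standard beta-number proof that underlies those references. Your reduction is sound: the identification $B_j = h(j,1)$ (valid since $\lambda_1' = s$ when all parts are positive), the description of the full hook-length multiset as $\{b-g : b\in\mathcal B,\ g\in\mathcal G,\ g<b\}$, and the observation that positions on one rod are exactly one congruence class modulo $t$ together give the equivalence immediately, since a hook divisible by $t$ corresponds precisely to a gap lying above a bead on some rod. The checks you flag as the delicate ones are indeed the right ones, and they go through: $B_j - h(j,k) = s + k - \lambda_k' - 1$, which equals some $B_i$ only if $h(i,k)=0$, impossible for any $(i,k)$; monotonicity in $k$ gives injectivity; and the count of gaps below $B_j$ equals $\lambda_j$, so the injection is a bijection. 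One caveat: citing \cite[Theorem 2.7.16]{JK81} for the hook-length lemma would be circular, since that theorem is (in substance) the abacus criterion you are trying to prove; the hook-length/beta-set description is a separate, earlier lemma in James--Kerber, so you should either cite that lemma specifically or include your sketched bijection, which as outlined is complete. With that adjustment, your write-up is a self-contained proof of a result the paper only quotes, which is strictly more than the paper provides.
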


Furthermore, using extended $t$-residue diagrams, the authors of \cite{garvan1990cranks} showed the following result.
\begin{lemma}[Bijection 2 of \cite{garvan1990cranks}]\label{Lemma: Garvan size of lists}
	Let $P_t(n)$ be the set of $t$-core partitions of $n$. There is a bijection $P_t(n) \rightarrow \{ N \coloneqq [n_0, \dots, n_{t-1}] \colon n_j \in \Z, n_0 + \dots + n_{t-1} = 0 \}$ such that
	\begin{equation*}
	|\Lambda| = \frac{t|N|^2}{2} + B \cdot N, \hspace{20pt} B \coloneqq [0,1, \dots, t-1].
	\end{equation*}
	When computing the norm and dot-product, we consider $N,B$ as elements in $\Z^t$.
\end{lemma}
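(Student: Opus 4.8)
The plan is to realize the bijection concretely through the $t$-abacus of Theorem \ref{thm:t-coreabacus}, working with the (two-sided) beta-set of $\Lambda$. Set $\beta_j \coloneqq \lambda_j - j$ for $j \ge 1$, with $\lambda_j \coloneqq 0$ for $j > s$, and let $X \coloneqq \{\beta_j : j \ge 1\}$; distributing the beads $X$ onto $t$ runners according to the residue of their position modulo $t$ recovers the abacus (the shift by $s$ in the structure numbers $B_j$ only relabels the rows). The exposed cell at the end of row $j$ sits at $(j,\lambda_j)$, so it carries the label $\beta_j \pmod t$ and lies in region $\lfloor \beta_j/t\rfloor + 1$; thus its label records the runner it occupies and its region records one more than its height on that runner. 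Consequently $n_i$, the maximal region of an exposed cell labeled $i$, equals $1$ plus the height of the topmost bead on runner $i$. By Theorem \ref{thm:t-coreabacus}, $\Lambda$ is a $t$-core exactly when every runner is gap-free, so each runner is determined by the position of its top bead alone, i.e. by $n_i$. This makes $\Lambda \mapsto N \coloneqq [n_0, \dots, n_{t-1}]$ well defined on $P_t(n)$.

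First I would pin down the constraint $n_0 + \dots + n_{t-1} = 0$ and the bijectivity. The set $X$ is a balanced Maya diagram: it contains all sufficiently negative integers, its complement contains all sufficiently large integers, and the two deviations from the vacuum $X_\emptyset = \Z_{<0}$ exactly cancel, which is the statement that $\Lambda$ has charge $0$. Since $n_i$ records the signed displacement of the top bead of runner $i$ relative to the vacuum (where each $n_i$ vanishes), $\sum_i n_i$ is precisely this total charge, hence $0$. Conversely, given $N$ with $\sum_i n_i = 0$, I would place a bead at every height below $n_i$ on runner $i$, reassemble the runners into a single beta-set, and read off the unique partition it encodes; the condition $\sum_i n_i = 0$ forces the reassembled diagram to be balanced (so it genuinely comes from a partition), and gap-freeness of every runner forces the partition to be a $t$-core via Theorem \ref{thm:t-coreabacus}. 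The two constructions are manifestly mutually inverse, yielding the bijection.

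The hard part is the size formula. I would start from the identity expressing the number of cells as a count of bead--gap inversions,
\[
\abs{\Lambda} = \#\left\{(a,b) : a \in X,\ b \in \Z\setminus X,\ a > b\right\},
\]
which holds because tracing the boundary of $\Lambda$ converts beads into vertical steps and gaps into horizontal steps, with each cell matched to exactly one out-of-order pair. On every runner the beads lie strictly below the gaps, so no inversion is supported on a single runner; each inversion joins two distinct runners $i \ne i'$. For fixed $i \ne i'$ the inequality $i + tk > i' + tk'$ with $k < n_i$ and $k' \ge n_{i'}$ reduces, since $\abs{i - i'} < t$, to $k \ge k'$ when $i > i'$ and to $k \ge k' + 1$ when $i < i'$, so the number of such pairs is a triangular number in $n_i - n_{i'}$. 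Summing over unordered pairs, the leading parts assemble via the identity $\sum_{i < i'}(n_i - n_{i'})^2 = t\abs{N}^2$ (valid because $\sum_i n_i = 0$) into the quadratic term $\tfrac{t}{2}\abs{N}^2$, while the asymmetry between the two cases, weighted by the runner labels, contributes the linear form $\sum_i i\, n_i = B \cdot N$. The genuine obstacle is this bookkeeping: one must simultaneously track the runner indices appearing in the heights and in the comparison $a > b$, treat the cases $i > i'$ and $i < i'$ uniformly, and verify that no constant term survives --- equivalently that the normalization $N = 0 \leftrightarrow \Lambda = \emptyset$ leaves no residue. Testing the formula on the vacuum and on a single elementary move $n_i \mapsto n_i + 1$, $n_{i'} \mapsto n_{i'} - 1$ offers a convenient way to fix every coefficient.
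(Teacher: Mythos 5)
Your proof is correct, but note that the paper does not actually prove this lemma: it is quoted as Bijection 2 of \cite{garvan1990cranks} and taken as an input (Proposition \ref{Proposition: list to abacus} builds on it). So your argument is necessarily a different route from the paper's --- it is a self-contained reconstruction via two-sided beta-sets/Maya diagrams and inversion counting, close in spirit to the classical abacus argument. I checked that the bookkeeping you defer does close up exactly as you describe: for an unordered pair $i<i'$ with $d \coloneqq n_{i'}-n_i$, the case ``bead on runner $i'$, gap on runner $i$'' contributes $\tbinom{d+1}{2}$ when $d\geq 0$ (and $0$ otherwise), while the reverse case contributes $\tbinom{-d}{2}$ when $d\leq 0$ (and $0$ otherwise), so each pair contributes $\tfrac{d(d+1)}{2}$ in all cases; summing and using $\sum_i n_i=0$ gives $\sum_{i<i'}(n_{i'}-n_i)^2=t|N|^2$ and $\sum_{i<i'}(n_{i'}-n_i)=2\sum_i i\,n_i$, hence $|\Lambda|=\tfrac{t|N|^2}{2}+B\cdot N$ with no constant term. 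Two minor imprecisions, neither fatal: first, the shift by $s$ relating your beta-set to the paper's abacus does not merely relabel rows, it also permutes the runners cyclically (this is precisely what the $\alpha_\ell,\beta_\ell$ in Proposition \ref{Proposition: list to abacus} track); your argument survives because you only use the shift-invariant statement that all runners are gap-free, via Theorem \ref{thm:t-coreabacus}. Second, the inversion identity $|\Lambda|=\#\{(a,b): a\in X,\ b\in\Z\setminus X,\ a>b\}$, which carries the whole computation, is asserted with only a sketched boundary-path justification; it is a standard fact (equivalently, the hook lengths in row $j$ are $\{\beta_j-b : b\notin X,\ b<\beta_j\}$), but in a complete write-up it deserves a proof or a citation. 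Compared with the original argument of \cite{garvan1990cranks}, which manipulates the residue diagram and its exposed cells directly, your inversion-count decomposition has the advantage that the quadratic form emerges transparently from pairwise runner comparisons, at the cost of importing that one classical identity.
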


We call $N$ the {\it list associated} to the $t$-residue diagram. We now show a relationship between abaci and lists of a partition.

%

\begin{proposition}\label{Proposition: list to abacus}
	Let $N = [n_0,\dots, n_{t-1}]$ be the list associated to the extended $t$-residue diagram of a $t$-core partition $\Lambda$. Let $\ell + s = \alpha_\ell t + \beta_\ell$ with $0 \leq \beta_\ell \leq t-1$.
	Then $N$ also uniquely represents the abacus 
$
	( \dots,n_{t-1} +\alpha_{t-1}, n_0 + \alpha_0, n_1 + \alpha_1, \dots ),
$
	where $n_\ell+\alpha_\ell$ occurs in position $\beta_\ell$ of the abacus.
\end{proposition}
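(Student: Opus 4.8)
The plan is to realize both the list $N$ and the abacus as bookkeeping of a single combinatorial object: the contents of the exposed cells, one per row. For each row $j\ge 1$ the exposed cell sits at content $k-j=\lambda_j-j$ (taking $\lambda_j=0$ for $j>s$), so I set
\[
Y:=\{\lambda_j-j:j\ge 1\}=\{\lambda_j-j:1\le j\le s\}\cup\{-j:j>s\}.
\]
The first piece is exactly $\{B_j-s:1\le j\le s\}=Y\cap(-s,\infty)$, since $B_j=(\lambda_j-j)+s$ and $\lambda_j-j\ge 1-s>-s$ for $j\le s$, while the second piece is all of $\Z_{\le -s-1}$. Thus $Y$ simultaneously records the (shifted) structure numbers and the contents governing the residue diagram, and the whole proof reduces to counting inside $Y$. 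Note that distinct rows give distinct contents, so exposed cells correspond bijectively to the elements of $Y$, with label $y\bmod t$ and region $\lfloor y/t\rfloor+1$.

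First I would express the list in terms of $Y$. Since within a fixed residue class the region $\lfloor y/t\rfloor+1$ is strictly increasing in the content $y$, the maximal region among exposed cells of label $i$ is attained at the largest content of that residue; that is, $n_i=\lfloor M_i/t\rfloor+1$, where $M_i:=\max\{y\in Y:y\equiv i\pmod{t}\}$. The maximum exists precisely because column $0$ supplies infinitely many exposed cells of every label (the well-definedness already noted after the definition of $n_j$), and this gives the useful normal form $M_i=t(n_i-1)+i$ for $0\le i\le t-1$. Next I would express the abacus in terms of $Y$: by definition $m_c=\#\{1\le j\le s:B_j\equiv c\pmod{t}\}$, and using $B_j=(\lambda_j-j)+s$ together with $\{B_j-s\}=Y\cap(-s,\infty)$ this becomes
\[
m_c=\#\{y\in Y:\ y>-s,\ y\equiv c-s\pmod{t}\}.
\]

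The key step is to invoke the $t$-core hypothesis through Theorem \ref{thm:t-coreabacus}: the no-gap property of the abacus says exactly that, within each residue class mod $t$, the elements of $Y$ are precisely the integers $\le M_i$. Hence, writing $\ell$ for the unique residue in $\{0,\dots,t-1\}$ with $\ell\equiv c-s\pmod{t}$ and setting $c=\beta_\ell$, $\ell+s=\alpha_\ell t+\beta_\ell$, the quantity $m_{\beta_\ell}$ is simply the number of integers $\equiv \ell\pmod{t}$ lying in the interval $(-s,M_\ell]$. A direct lattice-point count, using $M_\ell=t(n_\ell-1)+\ell$ for the upper endpoint and the expansion $\ell+s=\alpha_\ell t+\beta_\ell$ for the lower endpoint, collapses to $m_{\beta_\ell}=n_\ell+\alpha_\ell$. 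Since $\ell\mapsto\beta_\ell=(\ell+s)\bmod t$ permutes $\{0,\dots,t-1\}$, this determines every column count, and hence the abacus, uniquely from $N$.

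I expect the only real friction to be boundary bookkeeping rather than conceptual difficulty: keeping the shift by $s$ consistent between contents and structure numbers, matching ``region'' to $\lfloor\cdot/t\rfloor+1$ and ``column'' to a residue class, and in particular the case $\beta_\ell=0$. There the $t$-core property forces no structure number to be divisible by $t$ (position $0$ of the abacus can never hold a bead, since $B_j\ge 1$), whence $M_\ell=-s-t$ and one must check separately that both $m_0$ and $n_\ell+\alpha_\ell$ vanish; the generic count would otherwise be off by one at this endpoint. Verifying this single edge case, and that the count never becomes negative, is the main thing to get right.
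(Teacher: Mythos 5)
Your proof is correct, and it reaches the statement by a genuinely different route than the paper. The paper's own proof is iterative: it takes the maximal entry of $N$, identifies the corresponding largest part, computes its structure number ($B_1=\lambda_1-1+s=t(m_1+\alpha_1-1)+\beta_1$), places the lowest bead of column $\beta_1$ at row $m_1+\alpha_1$, invokes the $t$-core property to fill in all beads above it, removes that entry from the list, and repeats; columns whose computed height would be non-positive are then declared empty by fiat, and uniqueness is asserted as "easily seen." You instead argue globally: encoding the partition by the beta-set $Y=\{\lambda_j-j:j\geq 1\}$ of exposed-cell contents, you observe that $Y\cap(-s,\infty)$ is precisely the set of shifted structure numbers $B_j-s$ while the tail $\Z_{\leq -s-1}\subset Y$ records the empty rows, so that both the list entries ($n_i=\lfloor M_i/t\rfloor+1$ with $M_i$ the maximal content in residue class $i$) and the column counts ($m_c$ = number of elements of $Y$ greater than $-s$ in class $c-s$) are read off from the single object $Y$. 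Theorem \ref{thm:t-coreabacus} enters as the statement that $Y$ is downward closed within each residue class, and one lattice-point count then yields $m_{\beta_\ell}=n_\ell+\alpha_\ell$ for all $\ell$ simultaneously. The trade-off: your version turns the paper's loose ends -- the "process continues" induction, the ad hoc zeroing-out of non-positive heights, and the unproved uniqueness claim -- into a uniform computation in which non-negativity of the counts and the empty column $\beta_\ell=0$ (forced by $B_j\geq 1$, equivalently by the fact that no first-column hook length of a $t$-core is divisible by $t$) fall out automatically, and uniqueness is immediate because the column counts together with the no-gap property determine the abacus; the paper's iterative version, on the other hand, makes the entry-by-entry correspondence between the list and bead slots explicit, which is the form actually used later when tabulating the abaci and lists of self-conjugate $7$-cores. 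Your flagged edge case checks out exactly as you predict: when $\beta_\ell=0$ one has $M_\ell=-s-t$, hence $n_\ell=-\alpha_\ell$ and both sides vanish.
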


\begin{proof}
	The largest part $\lambda_1$ corresponds to the maximum region of the $t$-residue diagram, and also the lowest right-hand bead on the abacus. Let $m_1 \coloneqq \max\{n_0, \dots, n_{t-1}\}$ be achieved at $n_{\ell_1}$. Then $\lambda_1 = t(m_1-1) + \ell_1 + 1$. For the abacus, we correspondingly find $B_1 = \lambda_1 -1+s = t(m_1-1) +\ell_1 +s = t(m_1 + \alpha_1 -1) +\beta_1$, where $\ell_1 + s = \alpha_1 t + \beta_1$ with $0 \leq \beta_1 \leq t-1$. Hence we place a bead in the abacus at the slot $(m_1 +\alpha_1, \beta_1)$. Since this is a $t$-core partition, we also know that there are beads in all places above this slot. These beads correspond to other parts in the partition whose labels of exposed cells in the $t$-residue diagram are $\ell_1$ but where the exposed cells themselves lie in a lower region. Thus the $\beta_1$-th entry in the abacus takes value $m_1+\alpha_1$. 
	
	Then removing the element $n_{\ell_1}$ from the list we are left with $[n_0, \dots, n_{\ell_1  -1}, n_{\ell_1 +1}, \dots, n_{t-1}]$. We use the same technique as before, identifying $m_2 \coloneqq \max\{n_0, \dots, n_{\ell_1  -1}, n_{\ell_1 +1}, \dots, n_{t-1}\}$, achieved at $n_{\ell_2}$. We have $k-j \equiv \ell_2 \pmod{t}$ such that $t(m_2-1) \leq k-j < tm_2$, meaning that $\lambda_j = k = t(m_2-1)+\ell_2 +j$. Plugging this in to the formula for the structure numbers we find that $B_j = t(m_2 -1) +\ell_2 + s = t(m_2 + \alpha_2 - s) + \beta_2$, where $\ell_2 = \alpha_2 t + \beta_2$ with $0 \leq \beta < t$. Hence we place a bead in the abacus in the slot $(m_2+\alpha_2, \beta_2)$ and all other slots vertically above this, and so the $\beta_2$-nd entry in the abacus list is given by $m_2+\alpha_2$. This process continues for each entry of the list.
	
	If this process gives a non-positive value for the slots of the abacus in which beads are to be placed, we define the value in that column of the abacus list to be $0$ (it is seen that these values arise from the exposed cells in column $0$ of the extended $t$-residue diagram and hence are not a part of the partition). It is clear that the $\beta_\ell$ run through exactly a complete set of residues modulo $t$, and hence each column in the abacus is represented exactly once.
		It is easily seen that this process defines a unique abacus for each list $N$ (up to equivalency by Lemma \ref{Lemma: Abacus can have 0 first column}). The converse is also seen to hold.
	\end{proof}
	\begin{remark}
		If the resulting abacus $A$ that appears under an application of Proposition \ref{Proposition: list to abacus} has a non-zero first column, we may use Lemma \ref{Lemma: Abacus can have 0 first column} to rewrite $A$ as an equivalent abacus with a $0$ in the first place.
	\end{remark}

We use Proposition \ref{Proposition: list to abacus} to restrict the possible shapes of abaci associated to self-conjugate $t$-core partitions. 
\begin{lemma}\label{Lemma: shape of self-conj abaci}
	Let $t \geq 2$. With the notation defined as in Proposition \ref{Proposition: list to abacus}, an abacus is self-conjugate if and only if it is of the form
$
	(\dots, -n_1 + \alpha_1, -n_0+\alpha_0, n_0+\alpha_0, n_1+\alpha_1, \dots).
$
\end{lemma}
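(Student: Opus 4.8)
The plan is to treat self-conjugacy as the fixed-point condition for the conjugation involution $\Lambda\mapsto\Lambda'$, to transport this involution to the level of lists by means of Lemma \ref{Lemma: Garvan size of lists}, and then to translate the resulting condition on lists into the asserted shape of the abacus via Proposition \ref{Proposition: list to abacus}. Because the correspondences between a $t$-core partition, its list $N=[n_0,\dots,n_{t-1}]$, and its abacus are all bijective, it is enough to compute the action of conjugation on $N$ and to identify the lists it fixes; the two implications of the lemma then emerge together.

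The first and main task is to describe conjugation on lists. Conjugation reflects the Ferrers--Young diagram across its main diagonal, sending the cell $(j,k)$ to $(k,j)$; consequently the label $k-j$ of a cell is replaced by $-(k-j)$ modulo $t$, so a cell of residue $i$ becomes one of residue $t-i$ (with residue $0$ fixed), and the region inequalities $t(r-1)\le k-j<tr$ are reversed. Thus the extended $t$-residue diagram of $\Lambda'$ is exactly the reflection of that of $\Lambda$, the auxiliary column $0$ --- which guarantees that every residue is exposed in arbitrarily negative regions --- being preserved by the reflection. Matching the exposed cells of $\Lambda'$ with cells of $\Lambda$ under this reflection and tracking the induced negation of labels together with the transformation of regions, I would show that conjugation sends $N$ to the reversed-and-negated list $\overline{N}$ given by $\overline{n}_i=-\,n_{t-1-i}$. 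Conceptually this is the statement that, on the associated Maya diagram, conjugation is the point reflection $p\mapsto -p$ followed by the bead--gap interchange; reducing that reflection modulo $t$ reverses the runners and negates the charges, which is precisely $N\mapsto\overline{N}$.

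It follows that $\Lambda$ is self-conjugate if and only if $N=\overline{N}$, that is $n_i=-n_{t-1-i}$ for all $i$ (forcing the central entry to vanish for odd $t$). Feeding such an antisymmetric list into Proposition \ref{Proposition: list to abacus}, where the entry in position $\beta_\ell$ equals $n_\ell+\alpha_\ell$ and $\ell+s=\alpha_\ell t+\beta_\ell$, the relation $n_{t-1-i}=-n_i$ places the entry $-n_i+\alpha$ symmetrically opposite $n_i+\alpha$ about the central gap, collapsing the generic abacus $(\dots,n_{t-1}+\alpha_{t-1},n_0+\alpha_0,n_1+\alpha_1,\dots)$ into the displayed form $(\dots,-n_1+\alpha_1,-n_0+\alpha_0,n_0+\alpha_0,n_1+\alpha_1,\dots)$; conversely, any abacus of this shape manifestly comes from an antisymmetric list and hence from a self-conjugate partition. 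I expect the main obstacle to be the identification $N\mapsto\overline{N}$ in the previous step: one must reconcile the negation of residues with the simultaneous reversal of regions and the exchange of ``end of a row'' with ``bottom of a column'', pinning down the shift $i\mapsto t-1-i$ rather than $i\mapsto t-i$. A secondary, more routine, nuisance is aligning the shifts $\alpha_\ell$ (which depend on $s\bmod t$) so that they sit symmetrically in the final display; this is handled by the freedom to normalise the abacus used in Proposition \ref{Proposition: list to abacus}, after which the remaining computation is purely bookkeeping.
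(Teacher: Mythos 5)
Your proposal is correct and takes essentially the same route as the paper: both proofs reduce self-conjugacy to the antisymmetry relation $n_\ell = -n_{t-1-\ell}$ on the associated list and then apply Proposition \ref{Proposition: list to abacus} to translate that relation into the stated abacus shape. The only difference is that the paper obtains the antisymmetry relation by citing the proof of Bijection 2 in \cite{garvan1990cranks}, whereas you sketch a direct derivation of the conjugation action $n_i \mapsto -n_{t-1-i}$ via the reflected residue diagram (your Maya-diagram reflection $p\mapsto -1-p$ correctly pins down the index shift), which is precisely the content of that citation.
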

\begin{proof}
	The proof of \cite[Bijection 2]{garvan1990cranks} implies that the elements in the list $[n_0, \dots, n_{t-1}]$ associated to a self-conjugate partition satisfy the relations $n_{\ell} = -n_{t-\ell-1}$ for every $0 \leq \ell \leq t-1$. Combining this with Proposition \ref{Proposition: list to abacus} immediately yields the claim.
\end{proof}

\subsection{Self-conjugate $7$-cores}
	We now restrict our attention to abaci of self-conjugate $7$-cores. We require \cite[Lemma 1]{ono19974}, which allows us to form a system of canonical representatives for abaci associated to $7$-core partitions. Note that a similar result and following discussion holds for general $t$.
	\begin{lemma}\label{Lemma: Abacus can have 0 first column}
		The two abaci $A_1 = (m_0,m_1, \dots, m_6)$ and $A_2 = (m_{6}+1,m_0,\dots, m_{5})$ represent the same $7$-core partition.
	\end{lemma}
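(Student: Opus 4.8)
The plan is to exploit the fact that the abacus attached to $\Lambda$ depends on the number $s$ of parts chosen to represent it. The structure numbers $B_j = \lambda_j - j + s$ are defined relative to a fixed number of parts, but the same partition $\Lambda$ of $n$ may be written either as $(\lambda_1, \dots, \lambda_s)$ or, by appending a trailing zero, as $(\lambda_1, \dots, \lambda_s, 0)$ with $s+1$ parts. Since these two representations describe the \emph{same} partition, the abaci they produce must encode the same $7$-core; I will show that passing from the first representation to the second realizes precisely the operation $A_1 \mapsto A_2$.

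First I would record how the structure numbers transform under $s \mapsto s+1$. Writing $B_j$ for the structure numbers computed with $s$ parts and $B_j'$ for those computed with $s+1$ parts, one has $B_j' = \lambda_j - j + (s+1) = B_j + 1$ for $1 \le j \le s$, together with the new structure number $B_{s+1}' = 0 - (s+1) + (s+1) = 0$ arising from the appended zero part. Thus the new bead configuration is obtained from the old one by shifting every bead position up by one, as an integer $B \mapsto B+1$, and inserting a single additional bead at position $0$.

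Next I would translate this shift into row/column language via $B = 7(r-1) + c$ with $0 \le c \le 6$. A bead at $(r,c)$ with $c \le 5$ moves to $(r, c+1)$, while a bead in the last column $c = 6$, sitting at position $7(r-1)+6$, moves to position $7r = 7\big((r+1)-1\big)+0$, that is, to column $0$ in row $r+1$. Letting $m_j$ denote the number of beads in column $j$ of $A_1$, each column $j \ge 1$ of the new abacus therefore inherits the $m_{j-1}$ beads of old column $j-1$, while column $0$ receives the $m_6$ beads of old column $6$, each pushed down one row to rows $2, \dots, m_6+1$. The inserted bead at position $0$ occupies row $1$ of column $0$, so that column $0$ now holds beads in positions $(1,0), \dots, (m_6+1,0)$, a total of $m_6+1$. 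This yields exactly $A_2 = (m_6+1, m_0, \dots, m_5)$.

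The only delicate point is the wraparound at column $6$: one must check both that the downward row shift keeps those beads contiguous and that the newly inserted bead at position $0$ fills the vacated first row, so that column $0$ of $A_2$ consists of a contiguous block from row $1$, as demanded by Theorem \ref{thm:t-coreabacus}. Once this bookkeeping is verified, the equality of $A_1$ and $A_2$ as abaci of one and the same $\Lambda$ is immediate, and the lemma follows.
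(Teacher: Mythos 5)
Your proof is correct. The paper does not actually prove this lemma itself---it invokes it as \cite[Lemma 1]{ono19974}, transplanted from $t=4$ to $t=7$---and your argument (append a part of size zero, so every structure number shifts by $B \mapsto B+1$ and a new bead lands at position $0$, cyclically rotating the columns with the column-$6$ beads wrapping into column $0$ one row lower) is precisely the standard proof of that cited result, including the correct bookkeeping that keeps the columns contiguous as required by Theorem \ref{thm:t-coreabacus}.
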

	
	Thus every $7$-core partition may be represented by an abacus of the form $(0,a,b,c,d,e,f)$. 
	Then in a similar fashion to Ono and Sze, we find that there is a one-to-one correspondence
	\begin{equation*}
	(0,a,b,c,d,e,f) \leftrightarrow \{ \text{all } 7\text{-core partitions} \},
	\end{equation*}
	where $a,b,c,d,e,$ and $f$ are non-negative integers. We thus assume that the first column in each abacus has no beads. 
	We next use Lemma \ref{Lemma: shape of self-conj abaci} to considerably reduce the number of abaci we need to consider.
	
	\begin{lemma}\label{Lemma: conditions on A,B..}
		Assume that $A = (0,a,b,c,d,e,f)$ is an abacus for a self-conjugate $7$-core partition and recall that $s = a+b+c+d+e+f$. Let $s \not\equiv 4 \pmod{7}$ and $r\in\N_0$. 
		\begin{enumerate}[wide, labelwidth=!, labelindent=0pt]
			\item[\rm (1)] Assume that $s=7r$. Then
$
			 f=2r,$ $
			 a+e=2r,$ $
			 b+d = 2r,$ $
			 c=r.$
			 
			\item[\rm (2)] Assume that $s=7r +1$. Then
$
			a=2r+1,$ $
			 b+f=2r,$ $
			 c+e=2r,$ $
			 d = r.$
	
			\item[\rm (3)] Assume that $s=7r+2$. Then
$
			b+a=2r+1,$ $
			 c=2r+1,$ $
			 d+f=2r,$ $
			 e = r.$

			\item[\rm (4)] Assume that $s=7r+3$. Then
$
			b+c = 2r+1,$ $
			a+d=2r+1,$ $
			e=2r+1,$ $
			 f = r.$

			\item[\rm (5)] Assume that $s=7r+5$. Then
$
			d+e=2r+1,$ $
			 c+f=2r+1,$ $
			 b=2r+2,$ $
			a= r+1.$
			
			\item[\rm (6)] Assume that $s=7r+6$. Then
$
			e+f=2r+1,$ $
			 d=2r+2,$ $
			a+c=2r+2,$ $
			b = r +1.$
			
		\end{enumerate}
	\end{lemma}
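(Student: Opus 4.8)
The plan is to feed the list description of a self-conjugate $7$-core through Proposition \ref{Proposition: list to abacus} and then impose the normalization that the first column be empty. By the relation $n_\ell=-n_{t-\ell-1}$ extracted in the proof of Lemma \ref{Lemma: shape of self-conj abaci}, the list of a self-conjugate $7$-core reads $N=[n_0,n_1,n_2,0,-n_2,-n_1,-n_0]$; in particular the central entry $n_3$ is forced to be $0$, and the remaining entries split into the conjugate pairs $(n_0,n_6),(n_1,n_5),(n_2,n_4)$. For fixed $s$ I would compute, for each $\ell\in\{0,\dots,6\}$, the pair $(\alpha_\ell,\beta_\ell)$ from $\ell+s=7\alpha_\ell+\beta_\ell$ with $0\le\beta_\ell\le 6$, and, following the proposition, place the entry $n_\ell+\alpha_\ell$ in position $\beta_\ell$. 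The six admissible residues $s\equiv 0,1,2,3,5,6\pmod{7}$ produce six distinct patterns of carries $\alpha_\ell\in\{r,r+1\}$, so I would treat them as six parallel computations of identical flavour.

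In every admissible case the entry landing in the first column comes from the unique index $\ell_0\equiv -s\pmod{7}$, which for $s\equiv 0,1,2,3,5,6$ equals $0,6,5,4,2,1$ respectively; this $\ell_0$ belongs to one of the conjugate pairs and carries one of the free parameters $\pm n_0,\pm n_1,\pm n_2$ together with its carry. Imposing that this entry vanish (the canonical normalization provided by Lemma \ref{Lemma: Abacus can have 0 first column}) solves for the corresponding $n_i$ in terms of $r$, and by $n_{\ell_0}=-n_{6-\ell_0}$ this simultaneously fixes the conjugate entry, yielding one single-variable identity. The central index $3$, whose entry $n_3=0$ carries value $\alpha_3\in\{r,r+1\}$, yields the second single-variable identity, while the two remaining conjugate pairs each contribute a paired sum of the form $(n_i+\alpha)+(-n_i+\alpha')$ equal to a constant. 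These are precisely the four relations stated in each case; as a running check one verifies that they sum to $s$ and that the resulting $a,\dots,f$ are non-negative.

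The one genuinely structural point --- and the reason the statement excludes $s\equiv 4\pmod{7}$ --- lies in this normalization step. When $s\equiv 4\pmod{7}$ the index landing in the first column is $\ell_0=3$, whose list entry is the forced central value $n_3=0$; the entry deposited there is then $n_3+\alpha_3=r+1$, which carries no free parameter and cannot be made to vanish for $r\in\N_0$. Hence no abacus of the form $(0,a,b,c,d,e,f)$ occurs in this residue class, which is exactly why it is omitted. I expect this to be the main obstacle: verifying cleanly that the normalization is always available in the six listed cases and isolating its single failure at $s\equiv 4$. The rest --- computing the $(\alpha_\ell,\beta_\ell)$ and reading off the paired sums --- is routine, so I would present one representative case, say $s=7r$, in full detail and note that the remaining five are entirely analogous.
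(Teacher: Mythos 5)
Your proposal is correct and is essentially the paper's own argument: both amount to matching the abacus $(0,a,b,c,d,e,f)$ with its list via Proposition \ref{Proposition: list to abacus} and then imposing the self-conjugacy relations $n_\ell=-n_{t-\ell-1}$ from Lemma \ref{Lemma: shape of self-conj abaci} (you run the correspondence list-to-abacus, the paper runs it abacus-to-list, and like you it works one case in detail and declares the rest analogous). Your extra observation about $s\equiv 4\pmod 7$ --- that the first-column entry would be $n_3+\alpha_3=r+1\neq 0$ --- is a valid abacus-side explanation of the excluded case, though it is not needed for the lemma itself; the paper instead disposes of this case in a remark via the hook length $h(1,1)=2s-1\equiv 0\pmod 7$.
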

	\begin{proof}
		We prove (1). By Proposition \ref{Proposition: list to abacus} we see that $A$ corresponds to the list $[-r,a-r,b-r,c-r,d-r,e-r,f-r]$. Using Lemma \ref{Lemma: shape of self-conj abaci} and the fact that $s = 7r$, the conditions are easy to determine. The other cases follow in the same way.
	\end{proof}
	\begin{remarks*} \hfill
		\begin{enumerate}[wide, labelwidth=!, labelindent=0pt]
			
			\item It is clear how a similar result to Lemma \ref{Lemma: conditions on A,B..} may be obtained for all self-conjugate $t$-cores.
			
			\item 	The lack of the case $s \equiv 4 \pmod{7}$ in Lemma \ref{Lemma: conditions on A,B..} follows from the fact that there are no self-conjugate $(2t-1)$-core partitions with $s \equiv t \pmod{(2t-1)}$, which may be seen by inspecting the upper-left cell in the Ferrers--Young diagram of such a partition.
		\end{enumerate}
		
	\end{remarks*}

	Lemma \ref{Lemma: conditions on A,B..} shows that the abaci of self-conjugate $7$-core partitions naturally fall into one of the distinct families given in Table \ref{Table: acabi families}, enumerated with parameters $a,b,r \in \N_0$.

	\begin{center}
		\begin{table}[h!]
			\begin{tabular}{c c}
				{Type of Partition} & {Shape of Abaci} \\[5pt]
				I & $(0,a,b,r,2r-b,2r-a,2r)$ \\[5pt]
				II &  $	(0,2r+1,a,b,r,2r-b,2r-a)$\\[5pt] 
				III & $(0,a,2r+1-a,2r+1,b,r,2r-b)$ \\[5pt]
				IV & $(0,a,b,2r+1-b,2r+1-a,2r+1,r)$ \\[5pt]
				V &  $(0,r+1,2r+2,a,b,2r+1-b,2r+1-a)$\\[5pt]
				VI & $(0,a,r+1,2r+2-a,2r+2,b,2r+1-b)$ \\[10pt]
			\end{tabular} 
			\caption{\label{Table: acabi families} The different types of abaci for self-conjugate $7$-core partitions.}
		\end{table}
	\end{center}

	We relate the families of partitions to quadratic forms, with the relationship shown in the following proposition. For brevity, we write only triples without $\pm$ signs - it is clear that changing the sign on any entry preserves the result.

	\begin{proposition}\label{Prop: lists to quad forms}
		Let $n \in \N$ 
and $a,b,r\in\N_0$ be given.
		\begin{enumerate}[wide, labelwidth=!, labelindent=0pt]
			\item[\rm (1)] 
The Type I partition 
with parameters $a$, $b$, and $r$ 
is a partition of $n$ 
if and only if
			\begin{equation*}
			7n+14 =(7r+3)^2 + (7r+2-7a)^2 + ( 7r+1-7b)^2.
			\end{equation*}
			\item[\rm (2)] The Type II partition
with parameters $a$, $b$, and $r$ 
is a partition of $n$ 
if and only if 
			\begin{equation*}
			7n+14 = ( 7r+4 )^2 +( 7r+2-7a)^2 + (7r+1-7b )^2.
			\end{equation*}
			
			\item[\rm (3)] The Type III partition 
with parameters $a$, $b$, and $r$ 
is a partition of $n$ 
if and only if 
			\begin{equation*}
			7n+14 = (7r +5)^2 + (7r+4-7a )^2 + (7r+1-7b )^2.
			\end{equation*}
			
			\item[\rm (4)] The Type IV partition 
with parameters $a$, $b$, and $r$ 
 is a partition of $n$
if and only if 
			\begin{equation*}
			7n+14 = (7r+6  )^2 + (7r+5-7a  )^2 +  (7r+4-7b)^2.
			\end{equation*}
			
			\item[\rm (5)] The Type V partition 
with parameters $a$, $b$, and $r$ 
is a partition of $n$ 
if and only if
			\begin{equation*}
			7n+14 = (7r+8 )^2 + (7r+5-7a  )^2 + (7r+4-7b )^2.
			\end{equation*}

			\item[\rm (6)] The Type VI partition 
with parameters $a$, $b$, and $r$ 
 is a partition of $n$ 
if and only if
			\begin{equation*}
			7n+14 = (7r+9  )^2 + (7r+8-7a )^2 + (7r+4-7b )^2.
			\end{equation*}
		\end{enumerate}
	\end{proposition}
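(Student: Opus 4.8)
The plan is to treat all six cases by a single uniform computation, differing only in the bookkeeping that identifies the list entries. By Lemma~\ref{Lemma: shape of self-conj abaci} applied with $t=7$, the list $N=[n_0,\dots,n_6]$ associated to a self-conjugate $7$-core satisfies $n_\ell=-n_{6-\ell}$; in particular $n_3=0$ and $N=[n_0,n_1,n_2,0,-n_2,-n_1,-n_0]$. First I would record the general identity valid for \emph{every} self-conjugate $7$-core: inserting this list into Lemma~\ref{Lemma: Garvan size of lists} with $B=[0,1,\dots,6]$ gives $\frac{7}{2}|N|^2=7(n_0^2+n_1^2+n_2^2)$ and $B\cdot N=-2(3n_0+2n_1+n_2)$, so that
\begin{equation*}
n=|\Lambda|=7\left(n_0^2+n_1^2+n_2^2\right)-2\left(3n_0+2n_1+n_2\right).
\end{equation*}
Multiplying by $7$, adding $14$, and completing the square in each variable (the constants combine as $9+4+1=14$) then yields the key identity
\begin{equation*}
7n+14=(7n_0-3)^2+(7n_1-2)^2+(7n_2-1)^2,
\end{equation*}
valid for all self-conjugate $7$-cores.

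With this identity in hand, each part of the proposition reduces to expressing $n_0,n_1,n_2$ in terms of $a,b,r$ for the corresponding abacus in Table~\ref{Table: acabi families}. For this I would invoke Proposition~\ref{Proposition: list to abacus}, exactly as in the proof of Lemma~\ref{Lemma: conditions on A,B..}: writing $\ell+s=\alpha_\ell\cdot 7+\beta_\ell$ with $0\le\beta_\ell\le 6$, the abacus entry in position $\beta_\ell$ equals $n_\ell+\alpha_\ell$. Since $s\equiv 0,1,2,3,5,6\pmod 7$ in Types~I--VI respectively (the case $s\equiv 4$ being excluded), the induced cyclic shift of positions, together with the single index where $\alpha_\ell$ jumps from $r$ to $r+1$, is different in each case but is completely determined by $s\bmod 7$. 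Reading off the three independent entries $n_0,n_1,n_2$ and substituting into the key identity produces the stated sum of three squares. For instance, Type~I has $s=7r$ and list $[-r,a-r,b-r,0,r-b,r-a,r]$, so that $(7n_0-3,7n_1-2,7n_2-1)=(-(7r+3),-(7r+2-7a),-(7r+1-7b))$, giving part~(1) after squaring; the remaining five types are handled identically.

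Finally, the ``if and only if'' is automatic: for fixed $a,b,r$ Lemma~\ref{Lemma: Garvan size of lists} determines $|\Lambda|$ uniquely, and $n\mapsto 7n+14$ is injective, so the partition has size $n$ precisely when $7n+14$ equals the displayed value. I expect the only genuine effort to be the per-case application of Proposition~\ref{Proposition: list to abacus}: keeping track of which abacus slot corresponds to which list index as $s\bmod 7$ varies is the one place where care is needed, but it is entirely mechanical, and the subsequent algebra is identical across all six types (only the signs, which the proposition explicitly permits us to drop, differ).
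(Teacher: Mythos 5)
Your proof is correct, and it rests on the same ingredients as the paper's --- Proposition \ref{Proposition: list to abacus} to convert the abaci of Table \ref{Table: acabi families} into lists, and Lemma \ref{Lemma: Garvan size of lists} to compute $|\Lambda|$ --- but it organizes the algebra differently, and more efficiently. The paper proves part (1) by writing out $n=|\Lambda|$ for the Type I list, expanding $7n+14$ in $a,b,r$ by brute force, and matching the resulting quadratic polynomial against the claimed sum of three squares; the remaining five cases are declared to follow in the same way, each requiring its own expansion. You instead exploit the self-conjugacy constraints $n_\ell=-n_{6-\ell}$ and $n_3=0$ from Lemma \ref{Lemma: shape of self-conj abaci} once and for all to obtain the uniform identity
\begin{equation*}
7n+14=(7n_0-3)^2+(7n_1-2)^2+(7n_2-1)^2,
\end{equation*}
valid for every self-conjugate $7$-core, after which each of the six types needs only a linear substitution of $n_0,n_1,n_2$ in terms of $a,b,r$ rather than a fresh quadratic expansion. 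This buys a conceptual explanation of the constants $3,2,1$ (and hence of the residues modulo $7$ in the stated forms) and reduces the case analysis to pure bookkeeping. One warning that makes your choice of procedure essential: do carry out that bookkeeping from the abaci of Table \ref{Table: acabi families} via Proposition \ref{Proposition: list to abacus}, exactly as you propose, and not from the lists printed in Table \ref{Table: list families}. For Types III--VI the printed lists are inconsistent with the abaci (they differ by reparametrizations such as $a\mapsto 2r+1-a$), and substituting them into your key identity would yield, for instance, $(7r+3-7a)^2$ in place of $(7r+4-7a)^2$ in part (3); deriving the lists from the abaci, as your procedure does, reproduces the stated forms in all six cases, so your proof goes through.
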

	\begin{proof}
		We only prove (1). Combining the definition with Proposition \ref{Proposition: list to abacus}, the Type I partition $\Lambda$ with parameters $a$, $b$, and $r$ has the associated list $[-r,a-r,b-r,0,r-b,r-a,r]$. By Lemma \ref{Lemma: Garvan size of lists}, we thus have
\[
n=|\Lambda|=7\left(r^2+(a-r)^2 +(b-r)^2\right) + (a-r)+2(b-r)+4(r-b)+5(r-a).
\]
Hence we 
see that 
		\begin{align*}
		7n+14 = & 49\left( r^2 + (a-r)^2 +(b-r)^2 \right) + 7\left(a-r + 2(b-r) +4(r-b) +5(r-a) +6r\right) + 14 \\
		=& 147r^2 + 49a^2 + 49b^2 + 84r - 98ar - 98br - 28a - 14b +14.
		\end{align*}
		This is exactly the expansion of
		\begin{equation*}
		(7r+3)^2 + (7r+2-7a)^2 + (7r+1-7b)^2.
		\end{equation*}
		The other cases follow in the same way, using the associated lists in Table \ref{Table: list families}.\qedhere
	\begin{center}
		\begin{table}[h!]
			\begin{tabular}{c c}
				{Type of Partition} & {Shape of Associated list} \\[5pt]
				I & $[-r,a-r,b-r,0,r-b,r-a,r]$ \\[5pt]
				II &  $[r+1,a-r,b-r,0,r-b,r-a,-r-1]$\\[5pt] 
				III & $[a-r,r+1,b-r,0,r-b,-r-1,r-a]$ \\[5pt]
				IV & $[a-r,b-r,r+1,0,-r-1,r-b,r-a]$ \\[5pt]
				V &  $[a-r,b-r,-r-1,0,r+1,r-b,r-a]$\\[5pt]
				VI & $[b-r,-r-1,a-r-1,0,r+1-a,r+1,r-b]$ \\[10pt]
			\end{tabular} 
			\caption{\label{Table: list families} The different types of associated lists for self-conjugate $7$-core partitions.}
		\end{table}
	\end{center}
\rm
	\end{proof}

	Proposition \ref{Prop: lists to quad forms} shows that for each self-conjugate $7$-core of $n$ there is 
	a representation of $7n+14 = x^2 + y^2 + z^2$ as the sum of three squares with none of $x,y,z$ divisible by $7$. Define 
	\begin{align*}
	J(7n+14) \coloneqq \{ (x,y,z) \in \Z^3 \colon & x^2+y^2+z^2 = 7n+14, \text{ and } x,y,z \not\equiv 0 \pmod{7} \}.
	\end{align*}
	Let $K(7n+14) \coloneqq J(7n+14)/ \sim$ where $(x,y,z) \sim (x',y',z')$ if $(x',y',z')$ is any permutation of the triple $(x,y,z)$, including minus signs i.e., $(-x,y,z) \sim (x,y,z)$. Then it is easy to see that we obtain the following corollary.
	
	\begin{corollary}\label{cor:sc7K}
		There is an isomorphism between self-conjugate $7$-core partitions and $K(7n+14)$.
	\end{corollary}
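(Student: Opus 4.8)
The plan is to read off the map $\phi$ directly from Proposition~\ref{Prop: lists to quad forms} and then prove it is a bijection by combining a short counting argument with injectivity. First I would make $\phi$ precise: by Lemma~\ref{Lemma: conditions on A,B..} each self-conjugate $7$-core of $n$ lies in exactly one of the six families of Table~\ref{Table: acabi families} (the residue of $s$ modulo $7$ selects the type, and $s\equiv 4\pmod 7$ does not occur), so the matching part of Proposition~\ref{Prop: lists to quad forms} attaches to it a triple $(x,y,z)$ with $x^2+y^2+z^2=7n+14$. Reading off the displayed residues shows that none of $x,y,z$ is divisible by $7$, whence $(x,y,z)\in J(7n+14)$, and I set $\phi(\Lambda):=[(x,y,z)]\in K(7n+14)$.

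Next I would compute $|K(7n+14)|$. The only multiset of three nonzero squares modulo $7$ summing to $0$ is $\{1,2,4\}$, so for any triple in $J(7n+14)$ the three coordinates have pairwise distinct squares modulo $7$, hence pairwise distinct absolute values; thus every $\sim$-class has exactly $48=3!\cdot 2^3$ elements. The same residue count shows that a representation of $7n+14$ as a sum of three squares with one coordinate divisible by $7$ must have all three divisible by $7$, so $|J(7n+14)|=r_3(7n+14)-r_3\!\left(\frac{n+2}{7}\right)=48\operatorname{sc}_7(n)$ by Lemma~\ref{lem:sc7Theta^3}(1). Therefore $|K(7n+14)|=\operatorname{sc}_7(n)$, which already equals the number of self-conjugate $7$-cores of $n$.

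It then suffices to show that $\phi$ is injective, and here is the key point. In each type the leading coordinate ($7r+3,7r+4,\dots,7r+9$ for types I--VI) is positive and, using the ranges on $a,b,r$ forced by nonnegativity of the abacus entries in Table~\ref{Table: acabi families}, strictly dominates the other two in absolute value; its residue modulo $7$ is $3,4,5,6,1,2$ respectively. Since these run over all nonzero residues modulo $7$, the largest absolute value occurring in a $\sim$-class determines both the type and $r$. The remaining two absolute values lie in the disjoint residue pairs $\{1,6\},\{2,5\},\{3,4\}$ according to which coordinate they come from, and---crucially---the residue of an absolute value records the sign of the underlying coordinate (for instance a coordinate $\equiv 2\pmod 7$ is positive exactly when its absolute value is $\equiv 2$ and negative exactly when it is $\equiv 5$). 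This recovers the signed coordinates, hence $a$ and $b$, hence $\Lambda$, so $\phi$ is injective; combined with the cardinality count above it is then automatically a bijection. I expect the main obstacle to be precisely this sign bookkeeping: passing to $K(7n+14)$ forgets signs, and one must verify that the residues modulo $7$ encode exactly the lost information, while the dominance of the leading coordinate---needed for \emph{largest absolute value} to be well defined and type-detecting---rests on the routine check of the parameter ranges. The counting step is what lets me avoid a direct proof of surjectivity, i.e.\ checking that every admissible triple returns parameters lying in the allowed range.
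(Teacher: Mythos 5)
Your proof is correct. The dominance claim is the one point that genuinely needs checking, and it does hold: e.g.\ for Type I the constraints $0\le a,b\le 2r$ from nonnegativity of the abacus entries give $|7r+2-7a|\le 7r+2<7r+3$ and $|7r+1-7b|\le 7r+1<7r+3$, and the analogous estimates go through for Types II--VI; likewise the residue bookkeeping (squares mod $7$ must be $\{1,2,4\}$, leading residues $3,4,5,6,1,2$ exhaust the nonzero classes, and the exact residue of an absolute value encodes the sign) is sound. However, your route differs from the paper's in one substantive way. The paper states the corollary as immediate from Proposition \ref{Prop: lists to quad forms}, with the intended argument being a fully combinatorial bijection: surjectivity is checked directly, since for a given class in $K(7n+14)$ the largest absolute value pins down the type and $r$, and then integrality (from the residues) together with the bound $|c|<$ leading entry forces the recovered parameters $a,b$ into the admissible ranges --- essentially your injectivity computation read backwards. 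You instead obtain surjectivity from the cardinality count $|K(7n+14)|=\tfrac{1}{48}\bigl(r_3(7n+14)-r_3\bigl(\tfrac{n+2}{7}\bigr)\bigr)=\operatorname{sc}_7(n)$, which invokes Lemma \ref{lem:sc7Theta^3} (1). This is logically valid, since that lemma was proved earlier and independently (via modular forms), and it buys you a shortcut past the parameter-range verification. But it comes at a cost in context: the remark immediately following the corollary asserts that Corollary \ref{cor:sc7K} \emph{gives a combinatorial explanation} of Lemma \ref{lem:sc7Theta^3} (1), and with your proof that explanation becomes circular, because the lemma is now an ingredient of the corollary's proof. If you want to preserve that purpose, replace the counting step by the direct surjectivity check sketched above; the dominance estimates you already established are exactly what is needed to see that the recovered parameters land in the allowed ranges.
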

	
	\begin{remark}
Corollary \ref{cor:sc7K} gives a combinatorial explanation for Lemma \ref{lem:sc7Theta^3} (1). We then obtain an explanation of Corollary \ref{cor:counting} via the following exposition, using Gauss' bijective map from solutions of the equation $x^2+y^2+z^2 = 7n+14$ to primitive binary quadratic forms in certain class groups.
	\end{remark}
	
	We elucidate the case $n \equiv 0,1 \pmod{4}$ (a similar story holds for $n \equiv 3 \pmod{8}$). By 
Gauss's \cite[article 278]{10.2307/j.ctt1cc2mnd},
 for each representation of $7n+14$ as the sum of three squares there corresponds a primitive binary quadratic form of discriminant $-28n-56$. This correspondence is invariant under a pair of simultaneous sign changes on the triple $(x,y,z)$. Explicitly, the correspondence is given by the following. For $(x,y,z) \in J(7n+14)$ let $(m_0, m_1, m_2, n_0,n_1,n_2)$ be an integral solution to
	\begin{equation*}
	x=m_1n_2 - m_2n_1, \hspace{20pt} y = m_2n_0 - m_0n_2 \hspace{20pt} z = m_0n_1- m_1n_0,
	\end{equation*}
	where a solution is guaranteed by 
Gauss's \cite[article 279]{10.2307/j.ctt1cc2mnd}. 
	Then 
	\begin{equation*}
	(m_0u +n_0v)^2 + (m_1u +n_1v)^2 + (m_2u+n_2v)^2
	\end{equation*}
	is a form in $\operatorname{CL}(-28n-56)$. Further, this map is independent of $(m_0,m_1,m_2,n_0,n_1,n_2)$. Hence, similarly to \cite{ono19974}, we find a map $\phi$ taking self-conjugate $7$-cores to binary quadratic forms of discriminant $-28n-56$ given by
	
	\begin{equation}\label{Equation: defnintion phi}
	\phi \colon \Lambda \rightarrow A \rightarrow N \rightarrow (x,y,z) \rightarrow (m_0,m_1,m_2,n_0,n_1,n_2) \rightarrow \text{binary quadratic form}.
	\end{equation}
	
	We are now in a position to prove Theorem \ref{Theorem: main - quad forms}.
	
	\begin{proof}[Proof of Theorem \ref{Theorem: main - quad forms}]
		We first assume that $n \equiv 0,1 \pmod{4}$. It is well-known (see e.g. \cite[article 291]{10.2307/j.ctt1cc2mnd}) that we have $|\operatorname{CL}(-28n-56)| = 2^{r-1}k$, where $k$ is the number of classes per genus, and $2^{r-1}$ is the number of genera in $\operatorname{CL}(-28n-56)$. Fix $f_1, \dots, f_k$ to be representatives of the $k$ classes of the unique genus of $\operatorname{CL}(-28n-56)$ that $\phi$ maps onto. As in \cite{ono19974} we say that $(x,y,z)$ and $f_j$ are \emph{represented} by $(m_0,m_1,m_2,n_0,n_1,n_2)$ if
		\begin{equation*}
		x = m_1n_2 - m_2n_1, \hspace{20pt} y= m_2n_0 - m_0n_2, \hspace{20pt} z = m_0n_1 - m_1n_0,
		\end{equation*}
		and
		\begin{equation*}
		(m_0u + n_0v)^2 + (m_1u +n_1v)^2 + (m_2u +n_2v)^2 = f_j,
		\end{equation*}
		respectively. Let $\mathfrak{M}$ denote the set of all tuples $(m_0,m_1,m_2,n_0,n_1,n_2)$ that represent some pair $(x,y,z)$ and $f_j$. 
By Gauss's \cite[article 291]{10.2307/j.ctt1cc2mnd},
		 we have $|\mathfrak{M}| = 3\cdot2^{r+3}k$, and each $f_j$ is representable by $3\cdot 2^{r+3}$ elements in $\mathfrak{M}$. It is clear that all representatives $f_j$ have $(x,y,z) \in J(7n+14)$.
		 Note that the elements $(m_0,m_1,m_2,n_0,n_1,n_2)$ and $(-m_0,-m_1,-m_2, -n_0,-n_1,-n_2)$ both map to the same form in $K(7n+14)$, and there are no other such relations. Since each class in $K(7n+14)$ corresponds to $8\cdot 6$ different triples, we see that each element in $K(7n+14)$ has $\frac{3 \cdot 2^{r+3}}{8\cdot 6 \cdot 2} = 2^{r-2}$ different preimages. Hence the set of self-conjugate $7$-cores is a $2^{r-2}$-fold cover of this genus.
		To see that the genus is non-principal, we note as 
in \cite[Remark 3 ii)]{ono19974}
 that to be in the principal genus, one of $x,y,z$ would need to vanish. However, this is guaranteed to not happen by the congruence conditions on elements in $K(7n+14)$. The case where $n \equiv 3 \pmod{8}$ is similar. 
 
 Finally, for $n\equiv 2\pmod{4}$, one uses the simple fact that if the sum of three squares is congruent to $0$ modulo $4$, then all squares must be even. Iterating this eventually reduces it to one of the cases covered above or the $n\equiv 7\pmod{8}$ case. 
	\end{proof}

\section{Other $t$ and Conjecture \ref{Theorem: main intro}}\label{Section: other t and conjecture}
In this section we consider other values of $t$, proving Conjecture \ref{Theorem: main intro} in the cases $t\in\{2,3,5\}$ and offering partial results if $t>5$.
\subsection{The cases $t\in\lbrace 2,3 \rbrace$}\label{Sec: t=2,3}
With $\eta(\tau):=q^{\frac{1}{24}}\prod_{n\geq 1}(1-q^n)$ the usual {\it Dedekind eta-function},
 \cite[(3)]{ono19974} and \cite[Theorem 13]{alpoge2014self} 
 give the generating functions of $\operatorname{c}_2(n)$ and $\operatorname{sc}_3(n)$ as
\begin{align*}
\sum_{n \geq 1} \operatorname{c}_2(n)q^n  = q^{-\frac{1}{8}}\frac{\eta(2\tau)^2}{\eta(\tau)}, \qquad
\sum_{n \geq 1} \operatorname{sc}_3(n)q^n = q^{-\frac{1}{3}} \frac{\eta(2\tau)^2 \eta(3\tau) \eta(12\tau)}{\eta(\tau) \eta(4\tau) \eta(6\tau)}.
\end{align*}

These are modular forms of weight $\frac{1}{2}$ and levels $2$ and $12$, respectively. It is a classical fact that each is a lacunary series, i.e., that the asymptotic density of its non-zero coefficients is zero (for example, see the discussion 
after \cite[(2)]{MR1321575}). We immediately see that
\begin{equation*}
\operatorname{c}_2(n) = \begin{cases}
1 &\text{ if } n = \frac{j(j+1)}{2} \text{ for some } j \in \N, \\
0 &\text{ otherwise}.
\end{cases}
\end{equation*}
Furthermore, \cite[(7.4)]{garvan1990cranks} stated that 
\begin{equation*}
\operatorname{sc}_3(n) = \begin{cases}
1 &\text{ if } n = j(3j \pm 2) \text{ for some } j \in \N, \\
0 &\text{ otherwise}.
\end{cases}
\end{equation*}
From these, we immediately obtain the following corollary.
\begin{corollary}\label{Cor: sc3 = c2}
	For any $n\in \mathbb N$ that is both a triangular number and satisfies $n=j(3j \pm 2)$ for some $j \in \Z$ we have that $\operatorname{sc}_3(n) = \operatorname{c}_2(n) = 1$.
\end{corollary}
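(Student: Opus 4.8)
The plan is to read off both statements directly from the two explicit evaluations established immediately before the corollary, so the proof is essentially a matter of intersecting the two index sets. First I would invoke the closed form for $\operatorname{c}_2(n)$: since $n$ is assumed to be a triangular number, that is $n=\frac{j(j+1)}{2}$ for some $j\in\N$, the displayed formula for $\operatorname{c}_2$ gives $\operatorname{c}_2(n)=1$. Next I would invoke the closed form for $\operatorname{sc}_3(n)$: since $n=j(3j\pm2)$, the displayed formula for $\operatorname{sc}_3$ gives $\operatorname{sc}_3(n)=1$. Combining these two equalities yields $\operatorname{sc}_3(n)=\operatorname{c}_2(n)=1$, which is the claim.

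The one genuinely non-cosmetic point is the mismatch between the parametrization $j\in\N$ used in the stated formula for $\operatorname{sc}_3$ and the range $j\in\Z$ appearing in the hypothesis of the corollary. I would dispatch this by observing that replacing $j$ by $-j$ interchanges the two signs, since
\[
(-j)\bigl(3(-j)\pm2\bigr)=(-j)(-3j\pm2)=j(3j\mp2).
\]
Hence the set $\{\,j(3j\pm2):j\in\Z\,\}$ coincides with $\{\,j(3j\pm2):j\in\N\,\}$, and the hypothesis really does place $n$ in the support of $\operatorname{sc}_3$ as given by the stated formula.

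I do not expect any real obstacle here: once the generating functions have been identified with the eta-quotients and the lacunarity/explicit coefficient descriptions have been recorded (as is done in the preceding discussion), the corollary is immediate. The only care needed is the trivial sign-bookkeeping above to match the two index conventions; everything else is a direct substitution into the two coefficient formulas.
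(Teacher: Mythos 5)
Your proof is correct and takes essentially the same approach as the paper, which simply notes that the corollary follows immediately from the two displayed closed forms for $\operatorname{c}_2(n)$ and $\operatorname{sc}_3(n)$. Your extra observation that replacing $j$ by $-j$ swaps the two signs in $j(3j\pm 2)$ is sound and correctly reconciles the $j\in\Z$ hypothesis with the $j\in\N$ parametrization (the $j=0$ case being excluded since $n\in\N$).
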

Clearly, there are progressions on which both $\operatorname{sc}_3(n)$ and $\operatorname{c}_2(n)$ trivially vanish. For example, we have $\operatorname{sc}_3(4n+3) = \operatorname{c}_2(3n+2) = 0$. For $t=3$ we simply observe the following corollary.
\begin{corollary}\label{Cor: t=3}
	There are no arithmetic progressions on which $\operatorname{c}_3$ and $\operatorname{sc}_5$ are integer multiples of one-another, even asymptotically.
\end{corollary}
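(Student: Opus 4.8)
The plan is to establish Corollary \ref{Cor: t=3} by producing explicit generating-function descriptions of $\operatorname{c}_3(n)$ and $\operatorname{sc}_5(n)$, reducing both to lacunary weight-$\frac12$ modular forms whose nonzero coefficients are supported on thin, explicitly describable sets, and then showing that no arithmetic progression can force a nonzero proportional relationship between the two. First I would recall the classical product formula for $3$-cores, namely the Klein--Erd\H{o}s-type identity $\sum_{n\geq 0}\operatorname{c}_3(n)q^n = \prod_{n\geq 1}\frac{(1-q^{3n})^3}{1-q^n}$, which (after the appropriate eta-quotient normalization) is a weight-$1$ modular form. In fact $\operatorname{c}_3(n)$ is governed by the representation number of $n$ by the form $x^2+xy+y^2$: one has the well-known closed form $\operatorname{c}_3(n) = d_{1,3}(3n+1)-d_{2,3}(3n+1)$, where $d_{i,3}$ counts divisors congruent to $i \pmod 3$. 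In particular $\operatorname{c}_3(n)>0$ for a positive-density set of $n$, and its values are unbounded, growing like a divisor function.

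Next I would obtain the analogous description for $\operatorname{sc}_5(n)$. Following the same abacus/eta-quotient machinery used above for $\operatorname{sc}_3$ and $\operatorname{sc}_7$ (or by quoting the explicit generating function for self-conjugate $5$-cores from \cite{garvan1990cranks, alpoge2014self}), one finds that $\sum_{n\geq 0}\operatorname{sc}_5(n)q^n$ is a weight-$\frac12$ form that is \emph{lacunary}: its nonzero coefficients occupy a set of asymptotic density zero, and moreover each nonzero value is bounded (indeed $\operatorname{sc}_5(n)\in\{0,1,2\}$, with the positive values pinned to an explicit quadratic progression coming from the sum-of-squares representation). The essential structural contrast is therefore \emph{boundedness versus unboundedness}: $\operatorname{sc}_5$ takes only finitely many values, while $\operatorname{c}_3$ is unbounded.

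With both descriptions in hand, the argument concludes by a density-and-boundedness dichotomy. Suppose for contradiction that on some arithmetic progression $An+B$ we had $\operatorname{c}_3(An+B) = \lambda\, \operatorname{sc}_5(A'n+B')$ (up to the matching of progressions), asymptotically and up to a fixed integer multiple $\lambda$. On any progression the values of $\operatorname{c}_3$ are still unbounded (the divisor-type formula shows $\operatorname{c}_3$ is unbounded along any progression on which it is not identically zero, by choosing $3n+1$ with many prime factors $\equiv 1\pmod 3$ inside the class), whereas $\lambda\,\operatorname{sc}_5$ is bounded by $2\lambda$. Hence either $\operatorname{c}_3$ vanishes identically on the progression, or the two cannot agree even asymptotically. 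The only surviving possibility is mutual \emph{trivial} vanishing — exactly what the phrase ``even asymptotically'' in the statement excludes as a nontrivial relation — so no genuine proportionality exists.

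The main obstacle I anticipate is making the boundedness claim for $\operatorname{sc}_5$ and the unboundedness-along-progressions claim for $\operatorname{c}_3$ precise and airtight, since ``even asymptotically'' must be interpreted carefully: one must rule out not only exact identities but also relations holding for all sufficiently large $n$. The cleanest route is to avoid any asymptotic analysis of the modular forms themselves and instead argue purely arithmetically: use the lacunarity (density-zero support) of $\operatorname{sc}_5$ together with the positive lower density of the support of $\operatorname{c}_3$ on any progression where it is not identically zero. If the supports have different densities, no eventual proportional identity can hold; combined with the boundedness of $\operatorname{sc}_5$ against the unboundedness of $\operatorname{c}_3$, this forces the relation to be trivial. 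I would present this as the crux of the proof and treat the two generating-function computations as routine applications of the eta-quotient and abacus techniques already developed in the preceding sections.
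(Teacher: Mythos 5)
Your two central structural claims are both false, and the proof collapses with them. The fatal one is the assertion that $\operatorname{sc}_5(n)\in\{0,1,2\}$. Using the paper's own Lemma \ref{Lemma: Garvan size of lists} together with the self-conjugacy relation $n_\ell=-n_{t-\ell-1}$ (as in the proof of Lemma \ref{Lemma: shape of self-conj abaci}), a self-conjugate $5$-core corresponds to a list $[n_0,n_1,0,-n_1,-n_0]$ of size $5n_0^2+5n_1^2-4n_0-2n_1$, and completing the square gives
\begin{equation*}
\operatorname{sc}_5(n)=\#\left\{(x,y)\in\Z^2:\ x^2+y^2=5n+5,\ x\equiv 3\pmod{5},\ y\equiv 4\pmod{5}\right\}.
\end{equation*}
This is a binary-form representation count, hence unbounded: for $n=220$ (so $5n+5=5\cdot13\cdot17$) the solutions are $(33,4)$, $(-32,9)$, $(-12,-31)$, $(23,24)$, so $\operatorname{sc}_5(220)=4>2$; more generally, if $n+1$ is a product of $k$ distinct primes $\equiv 1\pmod{4}$, the standard Gaussian-integer count gives $\operatorname{sc}_5(n)=2^k$. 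Equivalently $\operatorname{sc}_5(n)=d_{1,4}(n+1)-d_{3,4}(n+1)$, where $d_{j,m}(N)$ counts divisors of $N$ congruent to $j$ modulo $m$; this twisted divisor sum is the explicit description (due to Garvan--Kim--Stanton) that the paper's proof invokes. Your second claim, that $\operatorname{c}_3$ has positive-density support, is also false: $\operatorname{c}_3(n)>0$ exactly when $3n+1$ is represented by $x^2+xy+y^2$, and such integers have density zero by the Landau--Ramanujan-type estimate, just as sums of two squares do. So neither your primary boundedness-versus-unboundedness dichotomy nor your fallback density dichotomy exists: the two functions are structural twins, namely unbounded twisted divisor sums supported on sets of density zero.

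The actual content of the comparison the paper makes is different in kind. One has $\operatorname{c}_3(n)=d_{1,3}(3n+1)-d_{2,3}(3n+1)$ (Hirschhorn--Sellers) versus $\operatorname{sc}_5(n)=d_{1,4}(n+1)-d_{3,4}(n+1)$ (Garvan--Kim--Stanton): divisor sums twisted by \emph{different} characters (modulo $3$ versus modulo $4$) at independently shifted arguments. Given any pair of progressions and any proposed integer factor, one can therefore decouple the two multiplicative structures, e.g.\ by Dirichlet's theorem choose $n$ in the progression so that $3n'+1$ is divisible by a prime $\equiv 2\pmod{3}$ to an odd power (forcing $\operatorname{c}_3=0$) while the corresponding argument $n''+1$ is a sum of two squares times a controlled cofactor (keeping $\operatorname{sc}_5>0$), and one can vary auxiliary prime factors to make any ratio non-constant, in the same style as the paper's argument for Proposition \ref{Prop: t=5}. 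To repair your write-up, both of your dichotomies must be replaced by an argument of this character-based kind; as written, the proof does not go through.
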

\begin{proof}
	Comparing the explicit descriptions for $\operatorname{c}_3(n)$ and $\operatorname{sc}_5(n)$ given 
in \cite[Theorem 6]{hirschhorn2009elementary} and  \cite[Theorem 7]{garvan1990cranks}
 respectively immediately yields the claim.
\end{proof}

\subsection{The case $t =5$}
In \cite[Theorem 4]{garvan1990cranks},
 Garvan, Kim, and Stanton proved that 
\begin{equation*}
\operatorname{c}_5(n) = \sigma_5(n+1),
\end{equation*}
 where $\sigma_5(n) \coloneqq \sum_{d | n} (\frac{d}{5}) \frac nd $ denotes the usual twisted divisor sum. Furthermore, Alpoge provided an exact formula for $\operatorname{sc}_9(n)$ 
in \cite[Theorem 10]{alpoge2014self}:
\begin{align*}
27 \operatorname{sc}_9(n) = \begin{cases}
 \sigma(3n+10) + a_{3n+10} (36a) -a_{3n+10}(54a) - a_{3n+10}(108a)    & \text{ if } n \equiv 1,3 \pmod{4},\\
 \sigma(3n+10) + a_{3n+10} (36a) -3 a_{3n+10}(54a) - a_{3n+10}(108a)    & \text{ if } n \equiv 0 \pmod{4}, \\
 \sigma(k) + a_{3n+10} (36a) -3 a_{3n+10}(54a) - a_{3n+10}(108a)    & \text{ if } n \equiv 2 \pmod{4},
\end{cases}
\end{align*}
where $k$ is odd and is defined by $3n+10 = 2^e k$ where $e \in \N_0$ is maximal such that $2^e \mid (3n+10)$. Here, the $a_n(E)$ are the coefficients appearing in the Dirichlet series for the $L$-function of the elliptic curve $E$. The curve $36a$ is $y^2 = x^3+1$, the curve $54a$ is $y^2+xy = x^3-x^2+12x+8$, and the curve $108a$ is $y^2 = x^3 +4$. 

\begin{proposition}\label{Prop: t=5}
	There are no arithmetic progressions on which $27 \operatorname{sc}_9(n) $ and $\operatorname{c}_5(n)$ are asymptotically equal up to an integral multiplicative factor. 
\end{proposition}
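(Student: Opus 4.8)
The plan is to reduce the statement to a comparison between an \emph{untwisted} divisor sum and the \emph{twisted} divisor sum $\sigma_5$ produced by $\operatorname{c}_5$, and then to show that these cannot be asymptotically proportional along any progression. I read ``asymptotically equal up to an integral multiplicative factor'' as: there are $a,b,c,d\in\N$ and $C\in\N$ with $27\operatorname{sc}_9(an+b)\sim C\,\operatorname{c}_5(cn+d)$ as $n\to\infty$; ruling this out \emph{a fortiori} rules out any exact identity. First I would refine the given progression so that $n$ runs through a fixed class modulo $4$; on such a refinement the displayed formula for $27\operatorname{sc}_9(n)$ has a single shape, with Eisenstein main term $\sigma(3n+10)$ (or $\sigma(k)$, where $3n+10=2^ek$, when $n\equiv 2\pmod 4$) and a remainder built from the coefficients $a_m(E)$ of the weight-two newforms of $36a$, $54a$, $108a$. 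By the Ramanujan--Petersson bound (Deligne) $a_m(E)\ll_\varepsilon m^{1/2+\varepsilon}$, so the remainder is $O(n^{1/2+\varepsilon})$ while the main term is $\ge 3n+10\gg n$; thus $27\operatorname{sc}_9(n)=\sigma(3n+10)\bigl(1+O(n^{-1/2+\varepsilon})\bigr)$. Since $\operatorname{c}_5(n)=\sigma_5(n+1)$ is \emph{exactly} the $\left(\frac{\cdot}{5}\right)$-twisted divisor sum, the assumed proportionality forces $\sigma(L_1(n))\sim C\,\sigma_5(L_2(n))$, where $L_1(n):=3(an+b)+10$ and $L_2(n):=cn+d+1$, and hence $\sigma(L_1(n))/\sigma_5(L_2(n))\to C$ along \emph{every} sub-progression.

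The key observation is that $\sigma(M)/M=\prod_{p^{e}\,\|\,M}\bigl(1+p^{-1}+\dots+p^{-e}\bigr)$ is governed by the factorisation of $M$, and this can be prescribed by congruences. Suppose first that $L_1$ and $L_2$ are not proportional, so that for every prime $p$ beyond those dividing the resultant $3a(d+1)-c(3b+10)$ one can solve $L_1(n)\equiv 0,\ L_2(n)\not\equiv 0\pmod p$ simultaneously. Fixing $y$, let $\Pi_y$ be the product of such primes $p\le y$ (also with $p\neq 5$). By the Chinese Remainder Theorem I would pass to a sub-progression on which $L_1(n)\equiv 0\pmod{\Pi_y}$ while $\gcd(L_2(n),\Pi_y)=1$, and then, after clearing the bounded fixed divisor of $L_2$, invoke Dirichlet's theorem to demand additionally that $L_2(n)$ be prime. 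Along this sub-progression $\sigma(L_1(n))/L_1(n)\ge\prod_{p\mid\Pi_y}(1+p^{-1})=:G_y$, whereas $\sigma_5(L_2(n))=L_2(n)+\left(\frac{L_2(n)}{5}\right)\sim L_2(n)$; since $L_1(n)\asymp L_2(n)\asymp n$, the ratio is eventually $\ge c\,G_y$ for some $c>0$. As $C$ is a fixed integer while $G_y\to\infty$ with $y$, this is a contradiction.

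Finally I would treat the degenerate case in which $L_1$ and $L_2$ are proportional, so that up to bounded fixed factors they share the same variable part $M=L(n)$; here the primorial trick on $L_1$ is unavailable and the twist itself must do the work. Now $\sigma(M)/\sigma_5(M)=\prod_{p^{e}\,\|\,M}\frac{1+p^{-1}+\dots+p^{-e}}{1+\left(\frac{p}{5}\right)p^{-1}+\dots+\left(\frac{p}{5}\right)^{e}p^{-e}}$, and for a prime $p\equiv 2,3\pmod 5$ (so $\left(\frac{p}{5}\right)=-1$) the factor at $p\,\|\,M$ equals $\frac{p+1}{p-1}$. Forcing, via a sub-progression, that $M$ be divisible exactly once by each of the first $k$ such primes while the complementary cofactor is prime, the ratio tends to the product of $\frac{p+1}{p-1}$ over those $k$ primes; since $\sum_{p\equiv 2,3\,(5)}p^{-1}$ diverges, this product is unbounded in $k$, again contradicting a finite limit $C$. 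In either regime the assumed proportionality fails. The main obstacle I anticipate is not analytic but bookkeeping: cleanly splitting into the proportional and non-proportional regimes, clearing the bounded fixed divisors of the relevant linear forms, and checking the compatibility of the congruence conditions with Dirichlet's theorem; the analytic inputs are only Deligne's bound, the Chinese Remainder Theorem, and primes in arithmetic progressions.
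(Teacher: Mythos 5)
Your reduction step is essentially the paper's: both proofs use the Hasse--Weil bound (your Deligne/Ramanujan--Petersson input is the same thing for elliptic curve coefficients) to discard the cusp-form terms in Alpoge's formula, so that the assumed proportionality becomes $\sigma(L_1(n))\sim C\,\sigma_5(L_2(n))$ for linear forms $L_1,L_2$. From there, however, your argument is genuinely different. The paper works with matched arguments: given a progression $n\equiv m\pmod{M}$, it fixes an auxiliary prime $\ell$ with $\left(\frac{\ell}{5}\right)=-1$ and runs over those $n=n(p)$ in the progression with $3n(p)+10=(3m+10)p\ell$, $p$ prime and $p\equiv \ell^{-1}\pmod{3M}$; by multiplicativity the limit of $\sigma(3n+10)/\sigma_5(3n+10)$ along this subsequence is, up to sign, $\frac{\sigma(3m+10)}{\sigma_5(3m+10)}\cdot\frac{\ell+1}{\ell-1}$, which depends on $\ell$, and since Dirichlet supplies infinitely many admissible $\ell$ the limit cannot exist. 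Your proof instead makes the ratio blow up along nested sub-progressions: in the non-proportional case you inflate $\sigma(L_1)/L_1$ by prescribing many small prime divisors via the Chinese Remainder Theorem while pinning down $\sigma_5(L_2)/L_2$ by forcing a prime cofactor via Dirichlet; in the proportional case you exploit the twist itself, loading the common form with primes $p\equiv 2,3\pmod{5}$ dividing it exactly once, each contributing a factor $\frac{p+1}{p-1}$, and using the divergence of $\sum p^{-1}$ over that class. What your route buys is generality: you explicitly handle arbitrary pairs of progressions, i.e.\ unmatched linear forms $L_1,L_2$, including the degenerate proportional case, whereas the paper's construction controls the factorization of the single quantity $3n+10$ appearing in both numerator and denominator and is terse about genuinely mismatched pairings. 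What the paper's route buys is brevity: one auxiliary prime, one subsequence, no case split, and essentially no coprimality bookkeeping.

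One gap to flag, though the paper shares it: on a progression lying entirely in $n\equiv 2\pmod{4}$ the Eisenstein term is $\sigma(k)$ with $3n+10=2^ek$, and your asserted bound ``main term $\geq 3n+10\gg n$'' fails when $2^e$ is large (e.g.\ when $3n+10$ is nearly a power of $2$), so the asymptotic $27\operatorname{sc}_9(n)=\sigma(3n+10)\left(1+O\left(n^{-1/2+\varepsilon}\right)\right)$ is not valid there as stated. This is repairable inside your framework: refine the progression modulo a suitable power of $2$ so that $v_2(3(an+b)+10)$ is constant; then $k$ is again a linear form of size $\asymp n$, and your two cases apply verbatim with $L_1$ replaced by that form.
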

\begin{proof}
Applying the Hasse--Weil bound for counting points on elliptic curves as in \cite[(13)]{alpoge2014self} and letting $n \rightarrow \infty$ we have, for $n \not\equiv 2 \pmod{4}$, that
	\begin{align*}
	\frac{27 \operatorname{sc}_9(n)}{\operatorname{c}_5(3n+9)} \sim \frac{\sigma(3n+10)}{\sigma_5(3n+10)},
	\end{align*}
	and for $n \equiv 2 \pmod{4}$
	\begin{align*}
	\frac{27 \operatorname{sc}_9(n)}{\operatorname{c}_5(n)} \sim \frac{\sigma(k)}{\sigma_5(n+1)}.
	\end{align*}
	
	It is then enough to show that $\sigma_5$ is never constant along arithmetic progressions, i.e., the limit is not constant. To see this, consider an arithmetic progression $n \equiv m \pmod{M}$. Let $\ell$ be any prime which does not divide $(3m+10)M$ and for which $\left(\frac{\ell}{5}\right)=-1$. For each prime $p\neq \ell$ that lies in the congruence class of the inverse of $\ell\pmod{3M}$ and is relatively prime to $5(3m+10)$ we may construct $n(p)=n_{\ell}(p)$ such that
\[
3n(p)+10=(3m+10)p\ell.
\]
Note that $3n(p)+10$ lies in the arithmetic progression. A straightforward calculation shows that if the limit exists, then 
\[
\lim_{p\to\infty}\frac{\sigma(3n(p)+10)}{\sigma_5(3n(p)+10)} = \pm \frac{1+\ell}{1-\ell}\frac{\sum_{d\mid (3m+10)} d}{\sum_{d\mid (3m+10)}\left(\frac{d}{5}\right) d}.
\]
Since $\ell$ is arbitrary and there are infinitely many choices of $\ell$ by Dirchlet's primes in arithmetic progressions theorem, this is a contradiction. \qedhere
\end{proof}

 \subsection{The case $t \geq 6$}\label{Section: t>5}
 Anderson showed 
in \cite[Theorem 2]{anderson2008asymptotic}
 that, for $t \geq 6$ and $n \rightarrow \infty$,
 \begin{equation}\label{Equation: asymps c_t}
 \operatorname{c}_t(n) = \frac{(2\pi)^{\frac{t-1}{2}} A_t(n) }{t^{\frac{t}{2}}
  \Gamma\left(\frac{t-1}{2}\right)}\left(n+\frac{t^2-1}{24}\right)^{\frac{t-3}{2}} +O\left(n^{\frac{t-1}{2}} \right),
 \end{equation}
 where 
 \begin{equation*}
 {A}_t(n) \coloneqq \sum_{\substack{k \geq 1 \\ \gcd(t,k)=1}} k^{\frac{1-t}{2}} \sum_{\substack{0 \leq h <k \\ \gcd(h,k)=1 }} e\left( - \frac{hn}{k}\right) \psi_{h,k}
 \end{equation*}
 for a certain $24k$-th root of unity $\psi_{h,k}$ independent of $n$. As Anderson remarked, it is possible to show that $0.05<A_t(n)<2.62$, although $A_t$ varies depending on both $t$ and $n$.
 
 In a similar vein, Alpoge showed
 in \cite[Theorem 3]{alpoge2014self}
 that, for $r \geq 10$ odd and $n \rightarrow \infty$, we have
 \begin{equation}\label{Equation: asymps for sc_t}
 \operatorname{sc}_r(n) = \frac{(2\pi)^{\frac{r-1}{4}}  \mathcal{A}_r(n)}{(2r)^{\frac{r-1}{4}} \Gamma \left(\frac{r-1}{4} \right) } \left( n+ \frac{r^2 -1}{24} \right)^{\frac{r-1}{4}-1} + O_r\left(n^{\frac{r-1}{8}}\right),
 \end{equation}
 where 
 \begin{equation*}
 \mathcal{A}_r(n) \coloneqq \sum_{\substack{\gcd(k,r) =1 \\ k \not\equiv 2 \pmod{4}}} (2,k)^{\frac{r-1}{2}} k^{\frac{1-r}{4}} \sum_{\substack{0 \leq h < k \\ \gcd(h,k) = 1}} e\left( - \frac{hn}{k}\right) \chi_{h,k}
 \end{equation*}
 with $\chi_{h,k}$ a particular $24$-th root of unity independent of $n$. 
Moreover, \cite[(86) and (87)]{alpoge2014self}
 imply that $ 0.14 < \mathcal{A}_r(n) < 1.86$.
 
 \begin{remark}
 	Inspecting the asymptotic behaviours given in \eqref{Equation: asymps c_t} and \eqref{Equation: asymps for sc_t}, it is clear that the only possibility of arithmetic progressions where the two asymptotics of $\operatorname{c}_t(n)$ and $\operatorname{sc}_r(n)$ are integer multiples of one another is $r=2t-1$. 
 \end{remark}

The following lemma provides partial results on Conjecture \ref{Theorem: main intro}.
 
 \begin{lemma}
 	For $t \geq 6$ and $t \not \equiv 1 \pmod{6}$ there are no arithmetic progressions on which $\operatorname{c}_t(n)$ and $\operatorname{sc}_{2t-1}(n)$ are integer multiples of one another.
 \end{lemma}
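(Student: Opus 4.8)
The plan is to argue entirely from the asymptotic expansions \eqref{Equation: asymps c_t} and \eqref{Equation: asymps for sc_t}. By the Remark preceding the lemma we may take $r=2t-1$, and then the exponents of $n$ in the two main terms agree, since $\frac{r-1}{4}-1=\frac{t-3}{2}$ (the additive shifts $\frac{t^2-1}{24}$ and $\frac{r^2-1}{24}$ differ but contribute the same power of $n$ to leading order). Writing $\kappa_t$ for the ratio of the constant prefactors, for any fixed progression $n\equiv a\pmod M$ we therefore have
\[
\frac{\operatorname{c}_t(n)}{\operatorname{sc}_{2t-1}(n)}=\kappa_t\,\frac{A_t(n)}{\mathcal A_{2t-1}(n)}\,(1+o(1)),
\]
as $n\to\infty$ through the progression, the denominator being eventually positive since $\mathcal A_{2t-1}(n)>0.14$. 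Thus it suffices to show that $A_t(n)/\mathcal A_{2t-1}(n)$ does not converge along any progression, equivalently that for no constant $C$ does $A_t(n)-C\,\mathcal A_{2t-1}(n)\to 0$ along it. For $t\ge 6$ both series converge absolutely (as $\sum_k\phi(k)k^{(3-t)/2}<\infty$), hence define uniformly almost periodic functions; a uniformly almost periodic function that tends to $0$ along the progression vanishes identically there, so \emph{every} Fourier--Bohr coefficient of $A_t-C\mathcal A_{2t-1}$, viewed as a function of the progression variable $j$ (where $n=a+Mj$), would have to vanish. The whole proof consists in exhibiting one surviving frequency where this fails.

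The key structural asymmetry is that in $A_t$ the $k$-th block has size $\asymp k^{(1-t)/2}$ and support $\gcd(t,k)=1$, whereas in $\mathcal A_{2t-1}$ the factor $(2,k)^{(r-1)/2}=(2,k)^{t-1}$ boosts the even blocks (support $\gcd(2t-1,k)=1$, $k\not\equiv 2\pmod 4$): a direct computation gives the block $k=2^m$ ($m\ge 2$) size $2^{(t-1)(2-m)/2}$, so $k=4$ already has size $\asymp 1$ and $k=4,8,16,\dots$ form a full tower. The argument now splits on the parity of $t$. If $t$ is even, then $\gcd(t,k)=1$ forces $k$ odd, so $A_t$ has \emph{no} even block at all, while $\mathcal A_{2t-1}$ carries the whole $2$-power tower. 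Setting $s:=v_2(M)$, I isolate the block $k=2^{s+1}$: upon restriction to the progression it produces an oscillation of exact period $2$ in $j$ (since $e(-hn/k)$ acquires period $k/\gcd(k,M)=2$) of amplitude $\asymp 2^{(t-1)(1-s)/2}$, whereas every odd block of $A_t$ folds to an \emph{odd} period in $j$ and so contributes nothing at the frequency $\tfrac12$. Hence the period-$2$-in-$j$ Fourier--Bohr coefficient of $A_t-C\mathcal A_{2t-1}$ is $-C$ times a nonzero quantity, the desired contradiction. (If $4\nmid M$ this is even cleaner, as the order-$1$ block $k=4$ alone gives an unmatched $n\bmod 4$ oscillation.)

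If $t$ is odd, then $t\not\equiv 1\pmod 6$ forces $t\equiv 0$ or $2\pmod 3$, and I run the identical argument with the prime $3$ in place of $2$. Exactly one of the two series carries the $3$-power tower $k=3,9,27,\dots$: if $3\mid t$ then $A_t$ omits every $3$-divisible $k$ while $3\nmid(2t-1)$ places the tower in $\mathcal A_{2t-1}$; if $t\equiv 2\pmod 3$ then $3\mid(2t-1)$ removes it from $\mathcal A_{2t-1}$ while $3\nmid t$ keeps it in $A_t$. Isolating $k=3^{\,v_3(M)+1}$ from whichever series contains it yields a period-$3$-in-$j$ oscillation whose frequency has denominator divisible by $3$, while the opposite series contains no $k$ divisible by $3$ and its blocks fold only to periods coprime to $3$; thus that frequency is unmatched and $A_t-C\mathcal A_{2t-1}\not\to 0$.

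I expect the main obstacle to be the non-vanishing of the isolated local block, namely that the finite exponential sum $\sum_{\gcd(h,p^{m})=1}(\text{root of unity})\,e(-ha/p^{m})$ (with $p\in\{2,3\}$ and $m=v_p(M)+1$) does not vanish for the given residue $a$; this requires the explicit values of Anderson's $\psi_{h,k}$ and Alpoge's $\chi_{h,k}$ and amounts to positivity of the $p$-adic local factor of the relevant singular series, which one checks directly. Finally, this strategy fails precisely when $t\equiv 1\pmod 6$, i.e.\ $t$ odd \emph{and} $t\equiv 1\pmod 3$: then $A_t$ does possess even blocks and, since $3\nmid t$ and $3\nmid(2t-1)$, both series carry the full $3$-power tower, so neither the $2$-adic nor the $3$-adic comparison produces an unmatched frequency. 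This is exactly the case excluded from the statement.
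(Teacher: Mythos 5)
Your proposal has two genuine gaps, one of scope and one of substance. The scope gap: the lemma concerns pairs of possibly \emph{different} progressions --- compare the motivating identity $2\operatorname{sc}_7(8n+1)=\operatorname{c}_4(7n+2)$ --- and the paper's proof accordingly compares $\operatorname{c}_t(M_1n+m_1)$ with $\operatorname{sc}_{2t-1}(M_2n+m_2)$ for $M_1,M_2,m_1,m_2\in\N$. The hypothesis $t\not\equiv 1\pmod{6}$ enters precisely through aligning these two different progressions: writing the main terms of \eqref{Equation: asymps c_t} and \eqref{Equation: asymps for sc_t} as powers of $24(M_1n+m_1)+t^2-1$ and $6(M_2n+m_2)+t^2-t$, forcing these growing factors to agree and cancel gives $M_2=4M_1$ and $6m_2=24m_1+t-1$, and integrality of $m_2$ is impossible unless $t\equiv 1\pmod{6}$. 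Your diagonal setup, in which both functions are evaluated at the \emph{same} $n$ along a single progression $n\equiv a\pmod{M}$, cannot see this mechanism at all; in your argument the hypothesis $t\not\equiv1\pmod 6$ is instead used through divisibility of $t$ and $2t-1$ by $2$ and $3$, so you are attacking a different (and strictly narrower) statement than the one the paper proves.

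The substance gap: in the diagonal case your reduction requires showing that $A_t(n)/\mathcal{A}_{2t-1}(n)$ does not converge along any progression, which is exactly the kind of question about the fluctuation of the singular series that the paper explicitly calls a difficult problem and leaves open (in the residual case $t\equiv1\pmod 6$). Your almost-periodicity/Fourier--Bohr framework is a sensible way to attack it, but its crux is unproven, and the isolation step is not correct as stated: after restricting to $n=a+Mj$, the frequency $\tfrac12$ receives contributions not only from the single block $k=2^{s+1}$ (with $s=v_2(M)$) but from \emph{every} $k$ with $k=2\gcd(k,M)$, for instance all $k=2^{s+1}d$ with $d\mid M$ odd; hence what must be shown nonzero is an infinite sum of local terms weighted by the unspecified roots of unity $\psi_{h,k}$ and $\chi_{h,k}$, not one term. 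You flag this yourself as ``the main obstacle \dots which one checks directly,'' but the check is never carried out, and without explicit formulas for $\psi_{h,k}$, $\chi_{h,k}$ it is not clear that it can be. So the proposal simultaneously narrows the statement and leaves its own key step open, whereas the paper's proof (terse as it is) is a short integrality argument on the progression parameters requiring none of this machinery.
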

 \begin{proof}
 	Using equations \eqref{Equation: asymps c_t} and \eqref{Equation: asymps for sc_t} we find that, for $M_1,M_2,m_1,m_2 \in \N$,
 	\begin{align*}
 	\frac{\operatorname{c}_t(M_1n+m_1)}{\operatorname{sc}_{2t-1} (M_2n+m_2)} \sim \frac{(4t-2)^{\frac{t-1}{2}} A_t(M_1n+m_1)}{4^{\frac{t-3}{2}} t^{\frac{t}{2}} \mathcal{A}_{2t-1}(M_2n+m_2)} \frac{\left( 24( M_1n+m_1) + t^2 - 1\right)^{\frac{t-3}{2}}}{\left(6 (M_2 n+m_2) + t^2 - t\right)^{\frac{t-3}{2}}} ,
 	\end{align*}
 	as $n \rightarrow \infty$. Furthermore, for the two growing powers of $n$ to be equal and cancel on arithmetic progressions, it is not difficult to see that we must also have that $t \equiv 1 \pmod{6}$. 	
 \end{proof}

 To prove Conjecture \ref{Theorem: main intro} it remains to consider the cases where $t \equiv 1 \pmod{6}$. We easily find that for the powers of $n$ to be equal we must have
 \begin{equation*}
 M_2 = 4M_1, \qquad m_2 = 4m_1+\frac{t^2-1}{6}.
 \end{equation*}
 It would therefore suffice to show that
 \begin{equation*}
  \frac{ (4t-2)^{\frac{t-1}{2}}  A_t(M_1n+m_1)}{4^{\frac{t-3}{2}} t^{\frac{t}{2}} \mathcal{A}_{2t-1}\left(4M_1n+4m_1+\frac{t^2-1}{6}\right)} 
 \end{equation*}
 is never constant as $n$ runs. However, this appears to be a difficult problem, and we leave Conjecture \ref{Theorem: main intro} open.

\bibliographystyle{amsplain}

\end{document}